\documentclass[a4paper, 10pt, english, reqno, dvipsnames, table]{amsart}
\usepackage[utf8]{inputenc}   
\usepackage[T1]{fontenc}      

\usepackage{amsmath}
\usepackage{amsthm}
\usepackage{amssymb}
\usepackage{amsbsy}    
\usepackage{mathtools}  
\usepackage{mathrsfs}   
\usepackage{amsfonts}

\usepackage{tikz}
\usepackage{tikz-cd}
\usepackage{quiver}
\usepackage{lipsum}

\usepackage{geometry}
\theoremstyle{plain}
\newtheorem{theorem}{Theorem}[section]
\newtheorem{proposition}[theorem]{Proposition}
\newtheorem{lemma}[theorem]{Lemma}

\newtheorem{definition-theorem}[theorem]{Definition-Theorem}
\newtheorem{definition-proposition}[theorem]{Definition-Proposition}

\theoremstyle{definition}

\theoremstyle{remark}
\newtheorem{remark}[theorem]{Remark}

\usepackage{paralist}   
\usepackage[figurewithin=none]{caption}
\usepackage{graphicx}
\usepackage{float}
\usepackage{xstring}
\usepackage{multirow}
\usepackage{array}
\usepackage{longtable}

\usepackage{xcolor}

\newcommand{\Gal}[2]{\operatorname{\mathbf{Gal}}(#1/#2)}

\makeatletter
\def\l@subsection{\@tocline{2}{0pt}{2pc}{5pc}{}}
\makeatother

\usepackage{url}
\usepackage[unicode, colorlinks, linktocpage=true, backref=page]{hyperref}
\hypersetup{
	linkcolor = MidnightBlue,
	citecolor = ForestGreen,
	urlcolor = YellowOrange,
	bookmarksnumbered = true
}

\title[Torsion groups of \(E/\mathbb{Q}\) over  $\mathbb{Z}_p$-extensions of quadratic fields : $p\le 5$ case]{Torsion groups of rational elliptic curves over  $\mathbb{Z}_p$-extensions of quadratic fields: the $p\le 5$ case}
\author{Haidong Li}
\address[Haidong Li]{Jiangsu Police Institute. No.48 Shifo Sangong, Pukou District, Nanjing, Jiangsu, 210031, China.}
\email{lihaidong@jspi.cn}

\date{\today}

\begin{document}

\begin{abstract}
Let $E$ be a rational elliptic curve. We generalize a theorem due to Avc{\i}\cite{AVCI2026153}, which asserts  that for  any quadratic field \(K\) and prime \(p>5\), the equality \(E(K)_{\mathrm{tors}} = E(L)_{\mathrm{tors}}\) holds for every \(\mathbb{Z}_p\)-extension \(L/K\). In this paper, we consider the setting where the \(\mathbb{Z}_p\)-extension \(L\) is replaced by the compositum \(K_{\infty}\) of all \(\mathbb{Z}_p\)-extensions of \(K\). Under this new setting, we prove the analogous statement for \(p=5\), and further provide partial results for the remaining primes \(p=3\) and \(p=2\).
\end{abstract}  
    
    \maketitle
    \tableofcontents

\section{Introduction}

Let \(E\) be an elliptic curve over a  number field \(F\). By the Mordell-Weil theorem \cite{Mordell1922,Weil1928}, as an abelian group,  \(E(F)\) is  finitely generated. Consequently, 
\[
E(F)\cong \mathbb{Z}^r\oplus E(F)_{\mathrm{tors}},
\]
where \(E(F)_{\mathrm{tors}}\) denotes the finite torsion subgroup of \(E(F)\).

It is a natural problem to classify all possible torsion subgroups \(E(F)_{\mathrm{tors}}\) for a number field \(F\). The first such result was obtained by Mazur \cite[Theorem (8)]{mazurModularCurvesEisenstein1977} for the base field \(\mathbb{Q}\), where he determined all possible torsion subgroups \(E(\mathbb{Q})_{\mathrm{tors}}\). The classification was later extended to quadratic fields by Kamienny \cite[Theorem 3.1]{kamiennyTorsionPointsElliptic1992} and Kenku--Momose \cite[Theorem (0.1)]{kenkumomose}, to cubic fields by Derickx-Etropolski-vHoeij-Morrow-Zureick-Brown\cite[Theorem A]{DER2021}, and to quartic fields by Derickx-Najman \cite[Theorem 1.1]{derickx2025}. For  general number fields of degree \(\ge 5\), the classification problem remains open.

A separate but equally natural direction is to consider the torsion subgroup \(E(L)_{\mathrm{tors}}\) for infinite algebraic extensions \(L/\mathbb{Q}\), such as \(\mathbb{Z}_p\)-extensions or their compositum. In contrast to the finite extension case, where \(E\) is defined over \(F\) and \(E(F)_{\mathrm{tors}}\) are \(F\)-rational points, we focus on rational elliptic curves \(E/\mathbb{Q}\) and \(E(L)_{\mathrm{tors}}\) for an algebraic extension \(L/\mathbb{Q}\). Chou \cite{chou2019} classified all possible torsion subgroups of \(E(\mathbb{Q}^{ab})\) for rational elliptic curves \(E/\mathbb{Q}\) and the maximal abelian extension of \(\mathbb{Q}\). By the Kronecker-Weber theorem \cite[Theorem 14.1]{Washington1997}, \(\mathbb{Q}^{\mathrm{ab}}\) is the compositum of all \(\mathbb{Q}(\mu_{p^\infty})\) as \(p\) ranges over all primes, where \(\mu_{p^\infty}\) denotes the group of all \(p\)-power roots of unity. Since each \(\mathbb{Q}(\mu_{p^\infty})\) contains the unique cyclotomic \(\mathbb{Z}_p\)-extension \(\mathbb{Q}_{\infty,p}\) of \(\mathbb{Q}\), it is natural to study torsion subgroups over \(\mathbb{Z}_p\)-extensions of more general number fields. 

In this direction, Avc{\i} \cite{AVCI2026153} considered the case of \(\mathbb{Z}_p\)-extensions \(L\) over quadratic fields \(K\), and proved that for any rational elliptic curve \(E/\mathbb{Q}\),
\[
E(K)_{\mathrm{tors}} = E(L)_{\mathrm{tors}}
\]
whenever \(p>5\), for every quadratic field \(K\). In the present article, we generalize this result to \(p=5\), and also obtain partial results for the primes \(p=3\) and \(p=2\).

\subsection{Main theorem}

\begin{theorem}\label{theorem:main}
    Let \(E/\mathbb{Q}\) be a rational elliptic curve and let \(K\) be a quadratic field. Denote by \(K_{\infty}\) the compositum of all \(\mathbb{Z}_p\)-extensions of \(K\).
    \begin{enumerate}
        \item[(i)] For \(p\ge 5\), we have
        \[
        E(K_{\infty})_{\mathrm{tors}}=E(K)_{\mathrm{tors}}.
        \]
        \item[(ii)] For \(p=3\), the following hold:
        \begin{itemize}
            \item For any prime \(q>7\) or \(q=5\),
            \[
            E(K_{\infty})_{\mathrm{tors}}[q^{\infty}] = E(K)_{\mathrm{tors}}[q^{\infty}].
            \]
            \item For \(q=7\), the growth of \(7\)-torsion can only occur inside the cyclotomic \(\mathbb{Z}_3\)-extension; for any non-cyclotomic \(\mathbb{Z}_3\)-extension \(F/K\),
            \[
            E(F)_{\mathrm{tors}}[7^{\infty}] = E(K)_{\mathrm{tors}}[7^{\infty}].
            \]
            \item For \(q=2\), the \(2\)-primary part is described by Proposition~\ref{prop:2-torsion-KneqQsqrt-3}.
            \item For \(q=3\): if \(K \neq \mathbb{Q}(\sqrt{-3})\), then
                \[
                E(K_{\infty})_{\mathrm{tors}}[3^{\infty}] = E(K_{\mathrm{cyc}})_{\mathrm{tors}}[3^{\infty}].
                \]
        \end{itemize}
        \item[(iii)] For \(p=2\), we have: for any prime \(q>7\) with \(q\neq 17\),
        \[
        E(K_{\infty})_{\mathrm{tors}}[q^{\infty}] = E(K)_{\mathrm{tors}}[q^{\infty}].
        \]
    \end{enumerate}
\end{theorem}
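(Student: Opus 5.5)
The plan rests on one structural fact. $K_\infty$ is Galois over $\mathbb{Q}$, and $\mathrm{Gal}(K_\infty/K)\cong\mathbb{Z}_p^{r}$ with $r=r_2+1+\delta$, where $\delta$ is the Leopoldt defect (here $\delta=0$, since $K$ is abelian over $\mathbb{Q}$). Hence every finite subextension $M/K$ of $K_\infty$ is abelian of $p$-power degree. Moreover, complex conjugation (the nontrivial element of $\mathrm{Gal}(K/\mathbb{Q})$) acts on $\mathrm{Gal}(K_\infty/K)$. This splits it into a "plus" part, which is the cyclotomic $\mathbb{Z}_p$-direction coming from $\mathbb{Q}_{\mathrm{cyc}}$, and a "minus" part (the anticyclotomic direction when $K$ is imaginary).

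Fix a prime $q$ and suppose $P\in E(K_\infty)[q^\infty]$ is not defined over $K$. Take $M=K(P)$. Then $[M:K]$ is a power of $p$ and $M/K$ is abelian. On the Galois side, the image of $\mathrm{Gal}(\overline{\mathbb{Q}}/K)$ in $\mathrm{GL}_2(\mathbb{Z}/q^n)$ acts on the orbit of $P$ through an abelian $p$-group quotient. Also, $\mathbb{Q}(P)\subseteq M$ has degree over $\mathbb{Q}$ dividing $2p^k$.

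The key input is the classification of degrees of definition of torsion points on rational elliptic curves (González-Jiménez–Najman, and the isogeny/Galois image results of Rouse–Zureick-Brown and Lozano-Robledo). If $E/\mathbb{Q}$ has a point of order $q$ in degree $d$ with $d\mid 2p^k$, then either $q\le 7$ and $d$ is small, or $q\in\{11,13,17,37,\dots\}$ with $d$ constrained by rational $q$-isogenies (degrees like $5,10,3,4,\dots$). Matching $d=[\mathbb{Q}(P):\mathbb{Q}]$ against $2p^k$ gives the case split.

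\begin{itemize}
\item For $p\ge 5$: the degree $[\mathbb{Q}(P):\mathbb{Q}]$ must be $1,2$ times a power of $p$. The only possibilities with a nontrivial $p$-part are $p=5$ with $q=11$, which has degree $5$ via the $11$-isogeny curves $121a$/$121c$, or with $q=5$ itself. The case $p>5$ is Avc\i's theorem; extend it to the compositum, since any finite layer is a $p$-extension and the same degree obstruction applies. For $p=5$, I rule out these cases by showing the relevant field $\mathbb{Q}(P)$ is not contained in any $\mathbb{Z}_5$-extension of a quadratic field. The field of the $11$-torsion point on $121$-curves is the degree-$5$ subfield of $\mathbb{Q}(\zeta_{11})$, which is ramified only at $11$. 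A $\mathbb{Z}_5$-extension is unramified outside $5$, contradiction. For $q=5$, the field $\mathbb{Q}(E[5])$-points of degree $5$ or $10$ come from the Borel and its $x$-coordinate field, ramified at primes of bad reduction. I use the constraint that $M/K$ is unramified outside $5$, together with the Weil pairing forcing $\mu_5$, whose degree $4$ is prime to $5$, to exclude growth.

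\item For $p=3$: for $q=5$ or $q>7$ the same ramification argument works. For $q=7$, the degree-$3$ point fields are cyclic cubic. A cubic inside $K_\infty$ that is Galois over $\mathbb{Q}$ with conjugation acting trivially lies in the plus part, i.e. the cyclotomic $\mathbb{Z}_3$-extension. This yields the statement about non-cyclotomic $F$. For $q=3$ with $K\ne\mathbb{Q}(\sqrt{-3})$, I show the $3$-torsion fields are fixed by conjugation in the appropriate sense, so they lie in $K_{\mathrm{cyc}}$. For $q=2$, cite Proposition~\ref{prop:2-torsion-KneqQsqrt-3}.

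\item For $p=2$ and $q>7$, $q\ne17$: the degrees of points of order $q$ that are powers of $2$ occur only for $q=17$ (degree $8$). This leaves $q=11,13,37$, with degrees not powers of $2$ (up to the factor $2$ from $K$), which is impossible.
\end{itemize}

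The main obstacle is the $p=5$, $q=5$ case and the $q=7$, $p=3$ case. There, degree considerations alone allow growth, and one must exploit that $K_\infty/K$ is unramified outside $p$ with pro-$p$ abelian Galois group. I would first try to show that $M/K$ is ramified at some bad prime $\ell\ne p$ via Néron–Ogg–Shafarevich, failing which I would use the explicit Galois image tables.
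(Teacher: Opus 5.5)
Your plan leaves open exactly the cases you flag as hard, and one of its degree claims is false.

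\textbf{The case $p=5$, $q=5$, and the same gap at $p=3$.} For $p=5$, $q=5$ you give only a plan. Your Weil-pairing remark ($\mu_5\not\subset F$ for a layer $F$ Galois over $\mathbb{Q}$, since $[F:\mathbb{Q}]=2\cdot 5^k$) shows only that $E(F)[5^\infty]$ is cyclic. That already disposes of the points of order $5$ and degree $5$ or $10$ you worry about: cyclicity makes $\langle P\rangle$ $G_{\mathbb{Q}}$-stable, so $[\mathbb{Q}(P):\mathbb{Q}]\mid 4$. But it does not exclude cyclic growth $\mathbb{Z}/5\mathbb{Z}\subsetneq\mathbb{Z}/25\mathbb{Z}$. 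That is precisely Avc{\i}'s exceptional case: a point $P$ of order $25$ with $\mathbb{Q}(P)$ abelian of degree $5$ or $10$.

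The missing idea is the descent-and-twist step of Lemma~\ref{lemma:p-twist}. Since $p$ is odd and $\mathbb{Q}(P)K/K$ is unramified outside $p$, at each $\ell\neq p$ the ramification index of the $p$-part $F_p$ of $\mathbb{Q}(P)$ is a power of $p$ dividing $e_\ell(K/\mathbb{Q})\le 2$. Hence $F_p$ is unramified outside $p$ and lies in $\mathbb{Q}_{\infty,p}$ by Kronecker--Weber. If $K\subset\mathbb{Q}(P)$, the twist $E^{(K)}$ acquires a point of order $25$ over $F_p$. Theorem~\ref{theorem:chouQinfty}(i), applied to $E$ and to $E^{(K)}$, then gives the contradiction. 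Searching for a ramified bad prime via N\'eron--Ogg--Shafarevich is no substitute. Nothing forces $\mathbb{Q}(P)$ to ramify away from $5$ and the primes of $K$ unless you already know that neither $E$ nor $E^{(K)}$ has $25$-torsion over $\mathbb{Q}_{\infty,5}$, and that is Chou's theorem.

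The same input is hidden in your sentence ``for $q=5$ or $q>7$ the same ramification argument works'' at $p=3$. Points of order $13$ occur over cyclic cubic fields, so there is no degree obstruction. The $13$-isogeny curves form an infinite family ($X_0(13)$ has genus $0$), so you cannot inspect the fields one by one as you did for the $121$-curves. What closes the case is Lemma~\ref{lemma:ell-twist}, which puts $\mathbb{Q}(P)$ (or its $3$-part, after twisting by $K$) inside $\mathbb{Q}_{\infty,3}$, together with Theorem~\ref{theorem:chouQinfty}(iii), whose list contains no $13$.

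Likewise, your ``fixed by conjugation'' for $q=3$ and your cubic-field argument for $q=7$ need the cyclicity input: $\mu_3\not\subset F$ when $K\neq\mathbb{Q}(\sqrt{-3})$, and $\mu_7\not\subset F$ always. This is what guarantees that $\langle P\rangle$ is $G_{\mathbb{Q}}$-stable and hence that $K\mathbb{Q}(P)/\mathbb{Q}$ is abelian. It matters: for imaginary $K$ the first anticyclotomic layer is an $S_3$-extension of $\mathbb{Q}$, so non-Galois cubic fields do occur inside $K_\infty$.

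\textbf{The case $p=2$, $q=13$.} Your claim that points of order $q$ have $2$-power degree only for $q=17$ is false for $q=13$. Indeed $4\mid\varphi(13)$, and Theorem~\ref{theorem:chou-quartic-galois} lists $\mathbb{Z}/13\mathbb{Z}$ over quartic Galois fields, so degree alone does not exclude a point of order $13$ over a cyclic quartic field. Something genuinely new is needed here. For instance, since $\mathbb{Q}(P)K/K$ is unramified outside $2$, the quartic isogeny character can ramify at odd primes only quadratically and only at primes ramified in $K$. After a quadratic twist it therefore has conductor $16$. The real field $\mathbb{Q}(\zeta_{16})^+$ lies in $\mathbb{Q}_{\infty,2}$ and is excluded by Theorem~\ref{theorem:chouQinfty}(ii). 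The imaginary cyclic quartic subfield of $\mathbb{Q}(\zeta_{16})$, which lies in $K_{\mathrm{cyc}}$ for $K=\mathbb{Q}(i)$, is not excluded by anything quoted in the paper.

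Be aware that the paper's own proof dismisses $q=13$ only by ``a similar argument'' with Theorems~\ref{theorem:NajmanK} and~\ref{theorem:chou-quartic-galois}. That argument has exactly this defect, so you cannot borrow the missing step from it.
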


The proof of Theorem~\ref{theorem:main} follows by combining Theorem~\ref{theorem:p>=5}, Theorem~\ref{theorem:p=3not-3}, Theorem~\ref{theorem:p=3=3}, and Theorem~\ref{theorem:p=2}, which treat the cases \(p \ge 5\), \(p = 3\), and \(p = 2\), respectively.

\begin{remark}
    The methods used in this article rely on elementary algebraic number theory.
    A key technique is the combination of the Weil pairing (Section~\ref{subsubsection:Weil pairing}) with the growth pattern of torsion subgroups (Section~\ref{subsubsection:growthpattern}).
    For an intermediate field \(F\) in \(K_\infty/K\) with \(F/\mathbb{Q}\) Galois, this combination forces \(E(F)[q^\infty]\) to be a cyclic group; in such a situation our method is fully effective.

    When \(K = \mathbb{Q}(\sqrt{-3})\) and \(p = 3\), the cyclotomic \(\mathbb{Z}_3\)-extension \(K_{\mathrm{cyc}}\) contains \(\mu_{3^\infty}\), so the \(3\)-primary part is not necessarily cyclic.
    Consequently, our method has limitations in this case and, more generally, for \(p = 2\).
    Moreover, for \(p = 2\), the study of field extensions---another key ingredient of our approach---becomes more subtle over quadratic fields because the base field already has degree \(2\), which interacts nontrivially with the structure of \(\mathbb{Z}_2\)-extensions.
\end{remark}

\subsection{Notations}

Throughout the paper we use the following conventions.

\begin{itemize}
    \item \(E\) denotes an elliptic curve defined over \(\mathbb{Q}\).
    \item \(K\) is a quadratic number field; when \(p\) is fixed, \(K_\infty\) denotes the compositum of all \(\mathbb{Z}_p\)-extensions of \(K\).
    \item For an algebraic extension \(L/\mathbb{Q}\), \(E(L)_{\mathrm{tors}}\) is the torsion subgroup of the Mordell-Weil group \(E(L)\).  
          For a prime \(q\), \(E(L)_{\mathrm{tors}}[q^\infty]\) denotes its \(q\)-primary part, and \(E(L)[n]\) the group of \(n\)-torsion points.
    \item \(\mu_n\) is the group of \(n\)-th roots of unity; \(\mu(L)\) denotes the set of all roots of unity contained in a field \(L\).
    \item \(\mathbb{Q}_{\infty,p}\) stands for the cyclotomic \(\mathbb{Z}_p\)-extension of \(\mathbb{Q}\).
    \item When \(K\) is an imaginary quadratic field, \(K_{\mathrm{cyc}}\) and \(K_{\mathrm{anti}}\) denote its cyclotomic and anticyclotomic \(\mathbb{Z}_p\)-extensions respectively.
\end{itemize}

\subsection*{Acknowledgment}
The author thanks Dr. Ruichen Xu for helpful conversations, and also acknowledges the Morningside Center of Mathematics, Chinese Academy of Sciences, where part of this work was carried out.

\section{Preliminaries}
We begin by recalling the basic notions that will be used throughout this article. In particular, we collect several  lemmas and propositions that are needed in the proof of the main theorem.

\subsection{\(\mathbb{Z}_p\)-extensions of number fields}\label{subsection:zpextension}

Let \(F\) be a number field . A \(\mathbb{Z}_p\)-extension \(L/F\) is a Galois extension such that 
\[
\Gal{L}{F} \cong \mathbb{Z}_p.
\]
Equivalently, there exists a tower of field extensions
\[
F=F_0 \subset F_1 \subset F_2 \subset \cdots \subset F_n \subset \bigcup_{n\ge 1} F_n = L
\]
such that \(\Gal{F_n}{F} \cong \mathbb{Z}/p^n \mathbb{Z}\) for each \(n\ge 1\).

Every number field \(F\) admits at least one \(\mathbb{Z}_p\)-extension, namely the cyclotomic \(\mathbb{Z}_p\)-extension \(F_{\mathrm{cyc}}\). It can be constructed as follows. Let \(F(\mu_{p^\infty})\) denote the field obtained by adjoining all \(p\)-power roots of unity to \(F\). The Galois group \(\Gal{F(\mu_{p^\infty})}{F}\) is a subgroup of 
\[
\Gal{\mathbb{Q}(\mu_{p^\infty})}{\mathbb{Q}} \cong \mathbb{Z}_p^\times \cong \Delta \times \mathbb{Z}_p,
\]
where \(\Delta\) is a finite group. Hence \(\Gal{F(\mu_{p^\infty})}{F} \cong \Delta' \times p^k \mathbb{Z}_p\), and the cyclotomic \(\mathbb{Z}_p\)-extension \(F_{\mathrm{cyc}}\) is the subfield fixed by \(\Delta'\) via Galois theory.

For a general number field \(F\), there may exist other \(\mathbb{Z}_p\)-extensions. In fact, we have the following standard result (see \cite[Theorem 13.4, Corollary 5.32]{Washington1997}).

\begin{proposition}\label{prop:allZpext}
    Let \(r_2\) denote the number of pairs of complex embeddings of \(F\), and let \(F_\infty\) be the compositum of all \(\mathbb{Z}_p\)-extensions of \(F\). Then
    \[
    \Gal{F_\infty}{F} \cong \mathbb{Z}_p^{\, r_2 + 1 + \delta},
    \]
    where \(\delta\) is a constant conjectured to be \(0\) (the Leopoldt conjecture). Moreover, if \(F/\mathbb{Q}\) is abelian, then \(\delta = 0\).
\end{proposition}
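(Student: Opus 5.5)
This is a standard result, and the plan is to derive it from class field theory together with Leopoldt's conjecture, which is known in the abelian case.

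\emph{Step 1: reduce to a statement about $\mathbb{Z}_p$-ranks.} By class field theory, $\Gal{F_\infty}{F}$ is the maximal torsion-free quotient of the Galois group $\mathcal{G}$ of the maximal abelian pro-$p$ extension of $F$ unramified outside $p$. This uses that every $\mathbb{Z}_p$-extension is unramified outside $p$: ramification at a prime $\ell \nmid p$ is tame, so its inertia group is a finite quotient of the tame inertia, whereas $\mathbb{Z}_p$ has no nontrivial finite subgroups. Since $\mathcal{G}$ is a finitely generated $\mathbb{Z}_p$-module, its torsion-free quotient is $\mathbb{Z}_p^{d}$ for some $d$, and it remains to compute $d$.

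\emph{Step 2: compute $d$ via the Artin map.} Let $U_1=\prod_{v\mid p}U_v^{(1)}$ be the product of the principal local units at primes above $p$. This is a $\mathbb{Z}_p$-module of rank $[F:\mathbb{Q}]=r_1+2r_2$, up to finite torsion. The Artin map gives an exact sequence
\[
0\to \overline{E_1}\to U_1\to \mathcal{G}\to \mathrm{Cl}'\to 0,
\]
where $\overline{E_1}$ is the $p$-adic closure of the global units congruent to $1$ modulo the primes above $p$, and $\mathrm{Cl}'$ is finite, coming from the $p$-part of the ideal class group. Hence
\[
d = r_1+2r_2-\operatorname{rank}_{\mathbb{Z}_p}\overline{E_1}.
\]
Dirichlet's unit theorem gives $\operatorname{rank}_{\mathbb{Z}_p}\overline{E_1}\le r_1+r_2-1$. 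Writing
\[
\operatorname{rank}_{\mathbb{Z}_p}\overline{E_1}=r_1+r_2-1-\delta
\]
with $\delta\ge 0$, we obtain $d=r_2+1+\delta$. Leopoldt's conjecture is precisely the assertion that $\delta=0$.

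\emph{Step 3: the abelian case.} If $F/\mathbb{Q}$ is abelian, I would invoke Brumer's $p$-adic analogue of Baker's theorem on linear forms in logarithms. It implies that the $p$-adic regulator of $F$ is nonzero, which is exactly the statement $\delta=0$. For the quadratic fields $K$ used in this paper, there is an even simpler argument. If $K$ is imaginary, then $r_1+r_2-1=0$, so $\delta=0$ trivially. If $K$ is real, the unit rank is $1$, and the $p$-adic logarithm of a fundamental unit is nonzero because that unit is not a root of unity; hence the $\mathbb{Z}_p$-rank of $\overline{E_1}$ is $1$.

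The main obstacle is the general abelian case. It genuinely requires Brumer's transcendence theorem, so I would cite it (as the paper does, via \cite{Washington1997}) rather than reprove it. The only part the paper actually needs, namely quadratic $K$, follows from the elementary rank count in Step 3.
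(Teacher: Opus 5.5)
Your proposal is correct and is essentially the standard argument behind the paper's citation of Washington: Theorem 13.4 computes the rank via $U_1/\overline{E_1}$ and the $p$-part of the class group, and Corollary 5.32 is Brumer's theorem giving $R_p\neq 0$ for abelian fields. Your observation that $\delta=0$ is elementary for quadratic fields, the only case the paper actually uses, is a worthwhile addition. One small imprecision in Step 1: finiteness of inertia at $\ell\nmid p$ does not follow from tameness alone, since the maximal pro-$p$ quotient of tame inertia is $\mathbb{Z}_p(1)$; it follows because the local extension is abelian, so inertia is the image of the local units under local reciprocity, and their pro-$p$ quotient is finite.
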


Now let \(K\) be a quadratic field. Since \(K/\mathbb{Q}\) is abelian, Proposition \ref{prop:allZpext} implies that the Galois group of the compositum \(K_\infty\) of all \(\mathbb{Z}_p\)-extensions of \(K\) over \(K\) is isomorphic to \(\mathbb{Z}_p\) if \(K\) is real quadratic, and to \(\mathbb{Z}_p^2\) if \(K\) is imaginary quadratic. In both cases, \(K_\infty/\mathbb{Q}\) is Galois. Indeed, for any \(\sigma \in \Gal{\overline{\mathbb{Q}}}{\mathbb{Q}}\) and any \(\mathbb{Z}_p\)-extension \(L/K\), the conjugate \(\sigma(L)/\sigma(K)\) is also a \(\mathbb{Z}_p\)-extension. Since \(K/\mathbb{Q}\) is quadratic, we have \(\sigma(K) = K\), so \(\sigma(L)\) is again a \(\mathbb{Z}_p\)-extension of \(K\), hence \(\sigma(L) \subset K_\infty\). As \(L\) ranges over all \(\mathbb{Z}_p\)-extensions of \(K\), it follows that \(\sigma(K_\infty) \subset K_\infty\). Since \(\sigma\) was arbitrary, applying the same argument to \(\sigma^{-1}\) yields \(\sigma^{-1}(K_\infty) \subseteq K_\infty\), and therefore \(\sigma(K_\infty) = K_\infty\). Thus \(K_\infty/\mathbb{Q}\) is Galois.

Since \(K_\infty/\mathbb{Q}\) is Galois, the group \(\Gal{K}{\mathbb{Q}}\) acts on \(\Gal{K_\infty}{K}\) by conjugation. Hence \(\Gal{K_\infty}{K}\) is a \(\mathbb{Z}_p[\Gal{K}{\mathbb{Q}}]\)-module, and therefore decomposes into eigenspaces for \(p>2\). If \(p>2\), then \(2\) is invertible in \(\mathbb{Z}_p\), so \(\Gal{K_\infty}{K}\) splits into its \(+1\)- and \(-1\)-parts.

If \(K\) is real, then by Proposition \ref{prop:allZpext} it has a unique \(\mathbb{Z}_p\)-extension, so \(K_\infty = K_{\mathrm{cyc}} = \mathbb{Q}_{\mathrm{cyc}}K\). Hence \(K_\infty/\mathbb{Q}\) is abelian.

If \(K\) is imaginary, then \(K_{\mathrm{cyc}}/\mathbb{Q}\) is also abelian, so the \(+1\)-part is nontrivial. Moreover, by the Kronecker--Weber theorem, the maximal pro-\(p\) abelian extension of \(\mathbb{Q}\) has \(\mathbb{Z}_p\)-rank \(1\). Hence \(K_\infty/\mathbb{Q}\) is not abelian, so the \(-1\)-part is also nontrivial. Since \(\Gal{K_\infty}{K}\) has \(\mathbb{Z}_p\)-rank \(2\), it decomposes as
\[
\Gal{K_\infty}{K} = \Gal{K_\infty}{K}^+ \oplus \Gal{K_\infty}{K}^-,
\]
with each factor of \(\mathbb{Z}_p\)-rank \(1\). The subfield fixed by the \(\Gal{K_\infty}{K}^-\) is the cyclotomic \(\mathbb{Z}_p\)-extension \(K_{\mathrm{cyc}}\) discussed above. On the other hand, the subfield fixed by the \(\Gal{K_\infty}{K}^+\) is called the anticyclotomic \(\mathbb{Z}_p\)-extension of \(K\), denoted by \(K_{\mathrm{anti}}\).

We note that for \(p>2\), although an imaginary quadratic field admits infinitely many \(\mathbb{Z}_p\)-extensions, only two of them are Galois over \(\mathbb{Q}\), namely the cyclotomic and the anticyclotomic \(\mathbb{Z}_p\)-extensions. We remark that for \(p=2\), there is no anticyclotomic extension in the sense of the above discussion.

The following fact will be used repeatedly in this article (see \cite[Proposition 13.2]{Washington1997}).

\begin{proposition}\label{prop:unramifiedoutsidep}
    Let \(L/F\) be a \(\mathbb{Z}_p\)-extension, and let \(\tilde{l}\) be a prime (archimedean or non-archimedean) of \(F\) which does not lie above \(p\). Then \(L/F\) is unramified at \(\tilde{l}\).
\end{proposition}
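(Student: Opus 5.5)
We want to show that if \(L/F\) is a \(\mathbb{Z}_p\)-extension and \(\tilde{l}\) is a prime of \(F\) not above \(p\), then \(L/F\) is unramified at \(\tilde{l}\). The plan is to study the inertia group \(I\subset \Gal{L}{F}\cong\mathbb{Z}_p\) at a prime of \(L\) above \(\tilde{l}\), and show that it must be trivial. We treat the archimedean and non-archimedean cases separately.

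\textbf{Archimedean case.} Here the inertia group is generated by complex conjugation, so it has order at most \(2\). The group \(\mathbb{Z}_p\) is torsion-free, so any finite subgroup is trivial. Hence \(I=1\), and no infinite prime ramifies.

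\textbf{Non-archimedean case, reduction to local class field theory.} Suppose \(\tilde{l}\) lies over a rational prime \(\ell\neq p\). Since \(\Gal{L}{F}\) is abelian, local class field theory for the completion \(F_{\tilde{l}}\) identifies \(I\) with the image of the unit group \(\mathcal{O}_{F_{\tilde{l}}}^\times\) under the local reciprocity map into \(\Gal{L}{F}\). This image is a continuous homomorphic image of \(\mathcal{O}_{F_{\tilde{l}}}^\times\).

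\textbf{Non-archimedean case, structure of the local units.} We have the decomposition
\[
\mathcal{O}_{F_{\tilde{l}}}^\times\cong \mu_{N}\times U^{(1)}, \qquad N=|\mathcal{O}_{F_{\tilde{l}}}/\tilde{l}|-1,
\]
where the principal units \(U^{(1)}\) form a pro-\(\ell\) group. A continuous homomorphism into the pro-\(p\) group \(\mathbb{Z}_p\) behaves as follows on each factor:
\begin{itemize}
\item it kills \(U^{(1)}\), because \(\ell\neq p\) and there are no nontrivial continuous maps from a pro-\(\ell\) group to a pro-\(p\) group;
\item it sends \(\mu_N\) to a finite subgroup of \(\mathbb{Z}_p\), which is trivial since \(\mathbb{Z}_p\) is torsion-free.
\end{itemize}
Therefore \(I=1\), and \(\tilde{l}\) is unramified in \(L\).

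\textbf{An alternative argument without the reciprocity map.} If one prefers to avoid local class field theory, one can argue directly with ramification groups. For a prime not above \(p\), the wild inertia is pro-\(\ell\), so it maps trivially to \(\mathbb{Z}_p\). Hence \(I\) is a quotient of the tame inertia, which is procyclic, so \(I\) is procyclic. Since \(I\) is a closed subgroup of \(\mathbb{Z}_p\), either \(I=1\) or \(I\) is open, i.e. \(I=p^k\mathbb{Z}_p\).

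To rule out the open case, we use that \(\Gal{L}{F}\) is abelian, so \(I\) is normalized by Frobenius. The tame relation \(\phi\tau\phi^{-1}=\tau^{q}\), with \(q=N(\tilde{l})\), then gives \(\tau^{q-1}=1\) in the abelian quotient. So \(I\) is a finite subgroup of \(\mathbb{Z}_p\), hence trivial.

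The only real obstacle is this step in the alternative argument. Either route works, and the class-field-theoretic one is the cleanest.
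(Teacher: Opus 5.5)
Your proposal is correct and follows essentially the same route as the source the paper relies on: the paper gives no proof of its own and simply cites \cite[Proposition 13.2]{Washington1997}, whose argument is your main one (by local class field theory the inertia group is the image of the local units, which are a finite group times a pro-$\ell$ group, so their continuous image in the torsion-free pro-$p$ group $\mathbb{Z}_p$ is trivial), and your archimedean case via torsion-freeness of $\mathbb{Z}_p$ is the standard one. Your alternative tame-inertia argument is also valid but not needed: in the abelian quotient the relation $\phi\tau\phi^{-1}=\tau^{q}$ forces $\tau^{q-1}=1$, so the inertia image is finite and hence trivial.
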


\subsection{Restrictions on torsion subgroups of elliptic curves}
We collect in this subsection some elementary but essential facts that will be used to restrict the torsion subgroups of elliptic curves.

\subsubsection{Weil pairing}\label{subsubsection:Weil pairing}

Let \(E\) be an elliptic curve defined over a number field \(F\). For \(n \ge 1\), let \(E[n]\) denote the subgroup of \(n\)-torsion points of \(E(\overline{F})\). It is a standard fact that \(E[n] \cong (\mathbb{Z}/n\mathbb{Z})^2\) (see \cite[III.6.1]{Silverman2009}). Associated to this is the Weil pairing
\[
e_n: E[n] \times E[n] \longrightarrow \mu_n,
\]
which is bilinear, non-degenerate, and compatible with the Galois action. For any algebraic extension \(L/F\), restricting to \(L\)-rational points yields
\[
e_n(L): E(L)[n] \times E(L)[n] \longrightarrow \mu_n(L).
\]
A key consequence of the Galois equivariance of \(e_n\) is the following (see \cite[Corollary 8.1.1]{Silverman2009}).

\begin{lemma}\label{lemma:rootsofunity}
    Let \(E/F\) be an elliptic curve over a number field \(F\), and let \(L/F\) be an algebraic extension. If \(E[n] \subseteq E(L)\), then \(\mu_n \subset L\).
\end{lemma}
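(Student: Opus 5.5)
The plan is to derive the statement directly from the Galois equivariance and non-degeneracy of the Weil pairing \(e_n\). Fix generators \(P,Q\) of \(E[n]\cong(\mathbb{Z}/n\mathbb{Z})^2\). Both lie in \(E(L)\) by hypothesis.

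First, I will show that \(\zeta:=e_n(P,Q)\) is a primitive \(n\)-th root of unity. Suppose \(\zeta^d=1\) for some \(d\mid n\). Then bilinearity gives \(e_n(P,dQ)=1\) and \(e_n(Q,dQ)=e_n(Q,Q)^d=1\). The points \(P,Q\) generate \(E[n]\), so \(e_n(R,dQ)=1\) for every \(R\in E[n]\). Non-degeneracy then forces \(dQ=O\). Since \(Q\) has exact order \(n\), this gives \(n\mid d\). Hence \(\zeta\) has order exactly \(n\), so \(\zeta\) generates \(\mu_n\). Here I use the standard alternating property \(e_n(T,T)=1\); the same conclusion also follows from surjectivity of \(e_n\) onto \(\mu_n\).

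Second, I will use Galois compatibility. Let \(\sigma\in\operatorname{Gal}(\overline{F}/L)\). Since \(P,Q\in E(L)\), we have \(\sigma P=P\) and \(\sigma Q=Q\). Therefore
\[
\sigma(\zeta)=e_n(\sigma P,\sigma Q)=e_n(P,Q)=\zeta.
\]
Thus \(\zeta\) is fixed by \(\operatorname{Gal}(\overline{F}/L)\). This holds even when \(L/F\) is infinite algebraic, since infinite Galois theory still identifies the fixed field of \(\operatorname{Gal}(\overline{F}/L)\) with \(L\). Hence \(\zeta\in L\).

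Finally, \(\mu_n=\langle\zeta\rangle\subset L\), which proves the lemma. The only step with any content is checking that \(\zeta\) is primitive, and it is handled by the non-degeneracy argument in the first step.
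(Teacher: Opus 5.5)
Your proof is correct and matches the argument the paper relies on. The paper gives no proof of its own but cites \cite[Corollary 8.1.1]{Silverman2009}, which also uses non-degeneracy to exhibit a primitive $n$-th root of unity of the form $e_n(S,T)$, and then Galois equivariance to show that this value is fixed by $\operatorname{Gal}(\overline{F}/L)$; your remark on the possibly infinite extension $L/F$ is correct.
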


\subsubsection{Isogenies}

Let \(E\) be an elliptic curve defined over a number field \(F\), and let \(L/F\) be an algebraic extension. An \(n\)-isogeny \(\phi\) of \(E\) defined over \(L\) is an \(L\)-morphism
\[
\phi: E \longrightarrow E'
\]
to another elliptic curve \(E'/L\) such that \(\deg(\phi)=n\). We say that \(E\) admits an \(n\)-isogeny over \(L\) if such an isogeny exists.

If \(E\) admits an \(n\)-isogeny over \(L\), say \(\phi\), then \(\ker(\phi)\) is a cyclic subgroup of order \(n\). Conversely, if \(E[n]\) contains a \(\Gal{\overline{F}}{L}\)-stable cyclic subgroup \(C\) of order \(n\), then the quotient map \(\phi_C: E \rightarrow E/C\) is an \(n\)-isogeny over \(L\). In particular, if \(P \in E(L)\) has order \(n\), then \(E\) admits an \(n\)-isogeny over \(L\).

A natural question is whether the existence of rational points over \(L\) forces the existence of an isogeny over the base field \(F\). The following lemma provides an answer.

\begin{lemma}\label{lemma:admits-n-isogeny}
    Let \(E\) be an elliptic curve over a field \(F\) of characteristic $0$, and let \(L/F\) be a Galois extension. If
    \[
    E(L)_{\mathrm{tors}} \cong \mathbb{Z}/m\mathbb{Z} \times \mathbb{Z}/mn\mathbb{Z},
    \]
    then \(E\) admits an \(n\)-isogeny over \(F\).
\end{lemma}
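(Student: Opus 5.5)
The plan is to find an explicit Galois-stable cyclic subgroup of order \(n\) and then invoke the correspondence between such subgroups and \(n\)-isogenies recalled just before the statement. Write \(T = E(L)_{\mathrm{tors}} \cong \mathbb{Z}/m\mathbb{Z} \times \mathbb{Z}/mn\mathbb{Z}\). We may assume \(T\) is finite, as it is in every situation where the lemma is used.

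The first step is to check that \(T\) is stable under \(G_F=\Gal{\overline{F}}{F}\). Take \(\sigma \in G_F\) and \(P \in E(L)\). Since \(E\) is defined over \(F\), \(\sigma(P)\) lies on \(E\). Since \(L/F\) is Galois, \(\sigma(L)=L\), so \(\sigma(P)\) is again \(L\)-rational. Finally \(\sigma\) is a group automorphism of \(E(\overline{F})\), so it preserves orders of points. Hence \(G_F\) acts on \(T\) by group automorphisms.

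The second step is to extract a characteristic cyclic subgroup of order \(n\). Consider the multiplication-by-\(m\) map \([m]: T \to T\). Its image is
\[
mT \cong m(\mathbb{Z}/m\mathbb{Z}) \times m(\mathbb{Z}/mn\mathbb{Z}) \cong 0 \times \mathbb{Z}/n\mathbb{Z},
\]
which is cyclic of order \(n\). Since \([m]\) commutes with every automorphism of \(T\), and in particular with the action of each \(\sigma\), we get \(\sigma(mT)=m\,\sigma(T)=mT\). Thus \(C:=mT\) is a \(G_F\)-stable cyclic subgroup of \(E[n]\) of order \(n\).

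The third step applies the criterion recalled above: a \(\Gal{\overline{F}}{F}\)-stable cyclic subgroup \(C\) of order \(n\) gives the quotient isogeny \(\phi_C: E \to E/C\), defined over \(F\) and of degree \(n\). The characteristic-\(0\) hypothesis guarantees that \(E/C\) and \(\phi_C\) descend to \(F\) (separability is automatic). The only point needing care is the correct choice of subgroup. The \(n\)-torsion \(T[n]\) is generally not cyclic, because it also contains part of \(\mathbb{Z}/m\mathbb{Z}\) when \(\gcd(m,n)>1\). Using the image \(mT\) rather than a kernel is exactly what makes the subgroup both cyclic and canonical, and I expect no other obstacle.
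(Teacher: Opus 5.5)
Your proof is correct and follows essentially the argument the paper defers to (Chou's Lemma 2.7, extended verbatim from \(\mathbb{Q}\) to a characteristic-\(0\) field \(F\)). That argument has the same two steps as yours: \(E(L)_{\mathrm{tors}}\) is \(G_F\)-stable because \(L/F\) is Galois, so the intrinsic cyclic subgroup \(m\,E(L)_{\mathrm{tors}}\cong\mathbb{Z}/n\mathbb{Z}\) is \(G_F\)-stable and yields the \(n\)-isogeny over \(F\). Your remark that ``we may assume \(T\) is finite'' is unnecessary, since the hypothesis already forces \(T\) to be finite.
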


\begin{proof}
    The proof is essentially contained in the proof of \cite[Lemma 2.7]{chou2019}, which treats the case \(F=\mathbb{Q}\). The argument extends verbatim to any field of characteristic \(0\).
\end{proof}

Moreover, the following results concerning isogenies can be related to torsion subgroups by Lemma~\ref{lemma:admits-n-isogeny}.

\begin{theorem}[\cite{Mazur1978rational} Theorem 1, \cite{kenku1982onthe} Theorem 1]\label{theorem:isogeniesoverQ}
    If \( E/\mathbb{Q} \) has an \( n \)-isogeny, then \( n \leq 19 \) or \( n \in \{21, 25, 27, 37, 43, 67, 163\} \).
\end{theorem}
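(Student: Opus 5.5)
This statement is not proved in the paper; it is quoted from Mazur and Kenku and used as a black box. If I had to reconstruct it, I would turn it into a question about rational points on modular curves. A cyclic \(n\)-isogeny over \(\mathbb{Q}\) is the same as a non-cuspidal point of \(X_0(n)(\mathbb{Q})\). Its kernel is cyclic, so for every divisor \(d\mid n\) it restricts to a rational \(d\)-isogeny. So the set of admissible \(n\) is closed under taking divisors. That gives two steps: first find the admissible primes, then find the admissible prime powers and composites.

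\textbf{Prime degrees (Mazur).} For a prime \(N\), I would follow Mazur's Eisenstein-ideal method. Let \(\tilde J\) be the Eisenstein quotient of \(J_0(N)\); it has rank \(0\) over \(\mathbb{Q}\). Consider the composite map \(X_0(N)\to J_0(N)\to \tilde J\) that sends the cusp \(\infty\) to \(0\). Show that it is a formal immersion at \(\infty\) in characteristic \(\ell\), for a suitable small auxiliary prime \(\ell\neq N\). A non-cuspidal rational point \(x\) reduces mod \(\ell\) to a point which, in the relevant cases, forces its image in \(\tilde J(\mathbb{Q})\) to coincide with that of a cusp. Since \(\tilde J(\mathbb{Q})\) is finite, and torsion injects under reduction, the formal immersion then forces \(x\) to equal the cusp, a contradiction. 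This works once \(N\) is large enough that \(\tilde J\) is nonzero. For the remaining small primes, genus-\(0\) and explicit computations give the list \(2,3,5,7,11,13,17,19,37,43,67,163\). The exceptional large primes come from CM curves and from the special points on \(X_0(37)\).

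\textbf{Prime powers and composites (Kenku).} By divisor-closure, every admissible \(n\) has all its prime factors in the list above. It then suffices to rule out a finite set of minimal inadmissible \(n\). These include \(X_0(N)\) for \(N=\) \(32, 49, 81, 125, 169\) and \(4\cdot 11, 2\cdot 11, 2\cdot 13, 3\cdot 13, 5\cdot 13, 7\cdot 13, 2\cdot 17, 3\cdot 17\), along with products involving \(19, 37, 43, 67, 163\), and so on. For each of these curves I would find a quotient of \(J_0(n)\) of rank \(0\), for example via Atkin--Lehner quotients or via the \(X_0(p)\) factors already treated. Then I would compute its finite Mordell--Weil group and show that every rational point is a cusp. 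The cases where rational points do exist give exactly \(14,15,21,25,27\).

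\textbf{Main obstacle.} The hard step is the prime case: proving that the Eisenstein quotient has finite Mordell--Weil group, and establishing the formal immersion property. Kenku's composite cases are laborious but rest on finite computations.
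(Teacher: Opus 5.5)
The paper only cites Mazur and Kenku for this theorem, so the question is whether your reconstruction works on its own. Reading the statement as being about \emph{cyclic} isogenies is correct, since multiplication by \(m\) is a rational isogeny of degree \(m^2\). The translation to \(X_0(n)(\mathbb{Q})\) and the divisor-closure step are also fine.

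The gap is in the prime-degree step. The formal-immersion argument only works when the non-cuspidal point \(x\) lies in the residue disc of a cusp modulo some odd prime \(\ell\). That is, the curve \(E\) attached to \(x\) must have potentially multiplicative reduction at \(\ell\); using \(w_N\) you may take the cusp to be \(\infty\). Only then does the torsion point \(f(x)\in\tilde J(\mathbb{Q})\) reduce to \(f(\infty)=0\), hence vanish, so that the formal immersion forces \(x=\infty\). Your phrase ``in the relevant cases'' hides exactly this hypothesis.

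Your conclusion that the argument works as soon as \(\tilde J\neq 0\) is false. \(\tilde J\neq 0\) for every prime \(N=11\) or \(N\ge 17\), including \(11,17,19,37,43,67,163\), and all of these have non-cuspidal rational points. For example, \(\tilde J=J_0(11)\) has rank \(0\). Yet the three non-cuspidal points of \(X_0(11)(\mathbb{Q})\) have \(j\in\{-2^{15},-11^2,-11\cdot 131^3\}\), all integral. So they never reduce to a cusp, and the formal immersion says nothing about them. What this half of Mazur's argument actually proves is weaker: a non-cuspidal rational point gives a curve with potentially good reduction at every odd prime.

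The complementary case is the other essential half of the proof, and your outline omits it. There one studies the isogeny character \(\varepsilon\colon G_{\mathbb{Q}}\to\mathbb{F}_N^\times\) on the kernel:
\begin{itemize}
\item Since \(\varepsilon^{12}\) is unramified away from \(N\), it equals \(\chi^k\) for the mod-\(N\) cyclotomic character \(\chi\), with \(k\) restricted by the local behaviour at \(N\).
\item Take a small prime \(\ell\) of potentially good reduction and a Frobenius eigenvalue \(\alpha_\ell\) of absolute value \(\sqrt{\ell}\). Comparing \(\varepsilon^{12}(\mathrm{Frob}_\ell)\) with \(\alpha_\ell^{12}\) makes \(N\) divide explicit nonzero integers, except in a CM-type case.
\item In that CM-type case the CM curves must be confronted, and with them \(43,67,163\).
\end{itemize}
``Explicit computations for the remaining small primes'' cannot replace this. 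Without it, the set of primes left open is not bounded a priori.

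By contrast, the composite part only has loose bookkeeping. Your list of minimal inadmissible levels omits, e.g., \(20,24,28,30,33,35,36,121\). It also includes \(44\), which is not minimal because \(22\) already fails. For the higher-genus \(X_0(n)\), finiteness of the Mordell--Weil group of a quotient is not enough by itself. You still have to determine the fibres over its rational points, or run a formal-immersion argument, before concluding that only cusps occur.
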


\subsubsection{Growth pattern of torsion subgroups in \(\mathbb{Z}_p\)-extensions}\label{subsubsection:growthpattern}

Let \(E\) be an elliptic curve defined over a number field \(F\), and let \(L/F\) be a \(\mathbb{Z}_p\)-extension with intermediate subfields \(F_n\) for \(n\ge 1\). For a prime \(q\), denote by
\[
M(n,q) := E(F_n)_{\mathrm{tors}}[q^\infty]
\]
the \(q^\infty\)-part of the torsion subgroup of \(E(F_n)\). The abelian group \(M(n,q)\) carries a natural structure as a \(\mathbb{Z}_q[\Gal{F_n}{F}]\)-module.

In the case where \(q \neq p\), we have the following algebraic fact concerning the structure of the group ring \(\mathbb{Z}_q[\Gal{F_n}{F}]\).

\begin{lemma}\label{lemma:structure}
    Suppose \(q \neq p\). Then
    \[
    \mathbb{Z}_q[\Gal{F_n}{F}] \cong \prod_{j=0}^n \prod_{i=1}^{m_j} \mathcal{O}_{j,i},
    \]
    where each \(\mathcal{O}_{j,i}\) is an unramified extension of \(\mathbb{Z}_q\), with residue field of size \(q^{d_j}\). Here,
    \[
    d_0 = 1,\quad m_0 = 1,
    \]
    and for \(j \ge 1\),
    \[
    d_j := \operatorname{ord}_{p^j}(q) = \min\{ d \ge 1 : q^d \equiv 1 \pmod{p^j} \}, \qquad
    m_j := \frac{\varphi(p^j)}{d_j}.
    \]
\end{lemma}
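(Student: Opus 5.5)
Since \(\Gal{F_n}{F}\cong \mathbb{Z}/p^n\mathbb{Z}\) is cyclic, I will begin with the identification
\[
\mathbb{Z}_q[\Gal{F_n}{F}]\cong \mathbb{Z}_q[x]/(x^{p^n}-1),
\]
obtained by sending a generator \(\sigma\) to \(x\). I will then factor \(x^{p^n}-1\) over \(\mathbb{Z}_q\) and apply the Chinese remainder theorem. Over \(\mathbb{Z}\) we have the factorization
\[
x^{p^n}-1=\prod_{j=0}^n \Phi_{p^j}(x),
\]
where \(\Phi_{p^j}\) is the \(p^j\)-th cyclotomic polynomial of degree \(\varphi(p^j)\), and \(\Phi_{p^0}=x-1\).

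The key point is that \(q\neq p\), so \(p^n\) is a unit in \(\mathbb{Z}_q\). Hence the reduction of \(x^{p^n}-1\) modulo \(q\) is separable over \(\mathbb{F}_q\), because its derivative \(p^nx^{p^n-1}\) is coprime to it.

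Fix \(j\). The roots of \(\Phi_{p^j}\) in \(\overline{\mathbb{F}}_q\) are the primitive \(p^j\)-th roots of unity \(\zeta\). The Frobenius \(\zeta\mapsto\zeta^q\) acts on them, and each orbit has size exactly the order of \(q\) in \((\mathbb{Z}/p^j\mathbb{Z})^\times\), which is \(d_j\). So \(\overline{\Phi}_{p^j}\) factors over \(\mathbb{F}_q\) into \(m_j=\varphi(p^j)/d_j\) distinct irreducible factors, each of degree \(d_j\). For \(j=0\) there is the single factor \(x-1\), giving \(d_0=m_0=1\).

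By Hensel's lemma, which applies because the reduction is separable, these factorizations lift to a factorization into pairwise coprime monic irreducibles over \(\mathbb{Z}_q\):
\[
\Phi_{p^j}=\prod_{i=1}^{m_j} g_{j,i}, \qquad \deg g_{j,i}=d_j.
\]
Since all factors of \(x^{p^n}-1\) are distinct modulo \(q\), the polynomials \(g_{j,i}\) for different pairs \((j,i)\) generate comaximal ideals in \(\mathbb{Z}_q[x]\). Their images modulo \(q\) are coprime in \(\mathbb{F}_q[x]\), and Nakayama's lemma lifts this comaximality from \(\mathbb{F}_q[x]\) to \(\mathbb{Z}_q[x]\). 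The Chinese remainder theorem then gives
\[
\mathbb{Z}_q[x]/(x^{p^n}-1)\cong\prod_{j,i}\mathbb{Z}_q[x]/(g_{j,i}).
\]

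Finally, each factor \(\mathcal{O}_{j,i}:=\mathbb{Z}_q[x]/(g_{j,i})\) is a free \(\mathbb{Z}_q\)-module of rank \(d_j\), and its reduction modulo \(q\) is \(\mathbb{F}_q[x]/(\overline{g}_{j,i})\cong\mathbb{F}_{q^{d_j}}\). A finite free \(\mathbb{Z}_q\)-algebra whose reduction modulo \(q\) is a field of the same degree is the ring of integers of the unramified extension of \(\mathbb{Q}_q\) of degree \(d_j\); concretely, it is \(\mathbb{Z}_q[\zeta_{p^j}]\).

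The only step needing care is the comaximality of the lifted factors. It follows directly from the separability established at the start, so I expect no real obstacle.
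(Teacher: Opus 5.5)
Your proposal is correct and follows essentially the same route as the paper: reduce modulo \(q\), use separability of \(x^{p^n}-1\) over \(\mathbb{F}_q\) to split each \(\Phi_{p^j}\) into \(m_j\) irreducible factors of degree \(d_j\), and lift to \(\mathbb{Z}_q\) by Hensel's lemma. The differences are minor: the paper lifts idempotents from \(\mathbb{F}_q[\Gal{F_n}{F}]\), whereas you lift the factorization and apply the Chinese remainder theorem, and you also justify why each factor is an unramified extension of \(\mathbb{Z}_q\), a point the paper leaves implicit.
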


\begin{proof}
    Consider the finite field \(\mathbb{F}_q\) with \(q\) elements. Since \(\Gal{F_n}{F}\) is cyclic of order \(p^n\), we have
    \[
    \mathbb{F}_q[\Gal{F_n}{F}] \cong \mathbb{F}_q[X]/(X^{p^n}-1).
    \]
    Because \((q,p)=1\), the polynomial \(X^{p^n}-1\) has no repeated roots over \(\mathbb{F}_q\) and factors as
    \[
    X^{p^n}-1 = (X-1)\prod_{j=1}^{n} \Phi_{p^j}(X),
    \]
    where \(\Phi_{p^j}(X)\) denotes the \(p^j\)-th cyclotomic polynomial. Setting \(\Phi_1(X)=X-1\) and \(p^0=1\), we may write uniformly
    \[
    X^{p^n}-1 = \prod_{j=0}^{n} \Phi_{p^j}(X).
    \]
    Each \(\Phi_{p^j}(X)\) decomposes into \(m_j\) distinct irreducible factors of degree \(d_j\), with \(d_0=1,\ m_0=1\) and for \(j\ge 1\), \(d_j\) and \(m_j\) as defined in the lemma. Consequently,
    \[
    \mathbb{F}_q[\Gal{F_n}{F}] \cong \prod_{j=0}^{n} \prod_{i=1}^{m_j} \mathbb{F}_{q^{d_j}}.
    \]
    The desired decomposition over \(\mathbb{Z}_q\) is then obtained by applying Hensel's lemma to lift the idempotents from the residue field. This completes the proof.
\end{proof}

Since \(M(n,q)\) is a \(\mathbb{Z}_q[\Gal{F_n}{F}]\)-module, Lemma~\ref{lemma:structure} shows that \(M(n,q)\) splits into summands, each of which is an \(\mathcal{O}_{j,i}\)-module. As a subgroup of \(E[q^\infty] \cong (\mathbb{Q}_q/\mathbb{Z}_q)^2\), the \(\mathbb{Z}_q\)-module \(M(n,q)\) can be generated by at most two elements. With these tools we now examine how the \(q\)-primary torsion can grow when passing from \(F_n\) to \(F_{n+1}\).

\begin{lemma}\label{lemma:growthpattern=ellnotp}
    Let \(E\) be an elliptic curve defined over a number field \(F\), and let \(L/F\) be a \(\mathbb{Z}_p\)-extension with intermediate subfields \(F_n\) for \(n\ge 1\). Let \(q\) be a prime such that \(q \neq p\). Then for each \(n\ge 1\), exactly one of the following holds:
    \begin{enumerate}
        \item \(E(F_{n+1})[q^\infty] = E(F_n)[q^\infty]\);
        \item \(E(F_{n+1})[q^\infty] \cong E(F_n)[q^\infty] \times \mathbb{Z}/q^a\mathbb{Z}\) for some integer \(a \ge 1\);
        \item \(E(F_{n+1})[q^\infty] \cong E(F_n)[q^\infty] \times \mathbb{Z}/q^a\mathbb{Z} \times \mathbb{Z}/q^b\mathbb{Z}\) for some integers \(a,b \ge 1\). In this case, necessarily \(E(F_n)[q^\infty] = 0\).
    \end{enumerate}
\end{lemma}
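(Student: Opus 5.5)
We prove Lemma~\ref{lemma:growthpattern=ellnotp} by decomposing \(M(n+1,q)=E(F_{n+1})[q^\infty]\) as a module over the group ring \(\mathbb{Z}_q[\Gal{F_{n+1}}{F}]\), using Lemma~\ref{lemma:structure}, and then isolating the part fixed by \(\Gal{F_{n+1}}{F_n}\).

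Let \(G=\Gal{F_{n+1}}{F_n}\), a cyclic group of order \(p\), and write \(M=M(n+1,q)\). Since \(p\neq q\), the element \(p\) is invertible in \(\mathbb{Z}_q\). Hence
\[
e=\frac{1}{p}\sum_{g\in G} g
\]
is an idempotent in \(\mathbb{Z}_q[\Gal{F_{n+1}}{F}]\). This gives a splitting
\[
M=eM\oplus(1-e)M, \qquad eM=M^{G}=E(F_n)[q^\infty].
\]
The first summand \(eM\) is exactly the product of the factors \(\mathcal{O}_{j,i}\) with \(j\le n\). The second summand \(N=(1-e)M\) is a module over \(\prod_{i=1}^{m_{n+1}}\mathcal{O}_{n+1,i}\). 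Therefore \(M\cong E(F_n)[q^\infty]\times N\) as abelian groups, and the whole problem reduces to describing \(N\).

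\textbf{Rank count.} The group \(M\) sits inside \(E[q^\infty]\cong(\mathbb{Q}_q/\mathbb{Z}_q)^2\). It is therefore a finite group of the form \(\mathbb{Z}/q^{\alpha}\times\mathbb{Z}/q^{\beta}\), and its \(q\)-rank is at most \(2\). Because \(M\cong E(F_n)[q^\infty]\times N\), the ranks add:
\[
\operatorname{rank}_q E(F_n)[q^\infty]+\operatorname{rank}_q N\le 2 .
\]
This leaves three possibilities. If \(N=0\), we are in case (1). If \(N\) has rank one, then \(N\cong\mathbb{Z}/q^a\) with \(a\ge1\), which is case (2). If \(N\) has rank two, then \(E(F_n)[q^\infty]\) has rank \(0\), so it is zero and \(N\cong\mathbb{Z}/q^a\times\mathbb{Z}/q^b\) with \(a,b\ge1\), which is case (3). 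The three cases are mutually exclusive: in case (1) we have \(N=0\), and \(N\) has rank \(1\) or \(2\) in cases (2) and (3) respectively, so exactly one of them holds.

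\textbf{Expected obstacle.} The only real difficulty is the identification \(eM=M^G=E(F_n)[q^\infty]\), where \(E(F_n)[q^\infty]\) is the \(G\)-invariant part of \(E(F_{n+1})[q^\infty]\). Both inclusions are routine: \(e\) acts as the identity on \(G\)-invariant elements, and \(eM\) is \(G\)-invariant because \(ge=e\) for every \(g\in G\).

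The finer structure of Lemma~\ref{lemma:structure} is not needed for this statement. It would give the extra constraint that \(N\) is an \(\mathcal{O}_{n+1,i}\)-module whose \(q\)-rank is a multiple of \(d_{n+1}\). In particular, growth is impossible when \(d_{n+1}>2\), and when \(d_{n+1}=2\) it forces \(a=b\). These refinements are presumably what the later sections of the paper use.
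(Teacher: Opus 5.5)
Your proof is correct, but it takes a leaner route than the paper.

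\textbf{The paper's route.} It invokes Lemma~\ref{lemma:structure} to decompose all of $\mathbb{Z}_q[\Gal{F_{n+1}}{F}]$ into unramified factors $\mathcal{O}_{j,i}$. It identifies $E(F_n)[q^\infty]$ with the part of $M$ supported on the factors with $j\le n$, by checking that a generator of $\Gal{F_{n+1}}{F_n}$ acts on each $\mathcal{O}_{n+1,i}$ as a primitive $p$-th root of unity $\zeta_i$ with $\zeta_i-1$ a unit. It then does a case analysis on $d_{n+1}=\operatorname{ord}_{p^{n+1}}(q)$, using that each cyclic $\mathcal{O}_{n+1,i}$-summand needs $d_{n+1}$ generators.

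\textbf{Your route.} You use only the averaging idempotent $e=\frac{1}{p}\sum_{g}g$ for the order-$p$ subgroup $\Gal{F_{n+1}}{F_n}$. This gives $M=M^G\oplus(1-e)M$ in one line, and a pure $q$-rank count finishes. Your argument needs nothing beyond $p\in\mathbb{Z}_q^\times$. The same argument gives the trichotomy for any finite Galois layer of degree prime to $q$.

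\textbf{What each buys.} The paper's heavier argument yields refinements that are not in the statement but are used later: no growth when $d_{n+1}\ge 3$, and $a=b$ when $d_{n+1}=2$. For example, Proposition~\ref{prop:2-torsion-KneqQsqrt-3} relies on $d_1=2$ and $d_j\ge 6$ for $j\ge 2$. You correctly flag these refinements at the end.

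\textbf{Two cosmetic slips.} It is the idempotent $e$, not the module $eM$, that cuts out the factors $\mathcal{O}_{j,i}$ with $j\le n$. Also, finiteness of $M$ comes from Mordell--Weil over the number field $F_{n+1}$, not from $M\subseteq(\mathbb{Q}_q/\mathbb{Z}_q)^2$.
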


\begin{proof}
    Set \(G_n = \operatorname{Gal}(F_n/F)\); then \(G_n \cong \mathbb{Z}/p^n\mathbb{Z}\) and \(G_{n+1}/G_n \cong \mathbb{Z}/p\mathbb{Z}\).
    Denote \(M_n = E(F_n)[q^\infty]\). As noted, \(M_n\) can be generated by at most two elements as a \(\mathbb{Z}_q\)-module.
    
    \noindent\textbf{Step 1. Decomposition of the group ring.}
    By Lemma~\ref{lemma:structure} applied to \(G_{n+1}\), we have a direct product decomposition of \(\mathbb{Z}_q\)-algebras
    \[
    \mathbb{Z}_q[G_{n+1}] \cong \mathbb{Z}_q[G_n] \times \prod_{i=1}^{m_{n+1}} \mathcal{O}_{n+1,i},
    \]
    where each \(\mathcal{O}_{n+1,i}\) is an unramified extension of \(\mathbb{Z}_q\) with residue field \(\mathbb{F}_{q^{d_{n+1}}}\) and
    \(d_{n+1} = \operatorname{ord}_{p^{n+1}}(q)\). Moreover, the subgroup \(G_{n+1}/G_n \cong \mathbb{Z}/p\mathbb{Z}\) acts on each \(\mathcal{O}_{n+1,i}\) as multiplication by a primitive \(p\)-th root of unity \(\zeta_i \in \mathcal{O}_{n+1,i}^\times\), and trivially on the \(\mathbb{Z}_q[G_n]\)-factor (since the action factors through the quotient \(G_{n+1} \to G_n\)).

    \noindent\textbf{Step 2. Splitting of the module.}
    View \(M_{n+1}\) as a finite \(\mathbb{Z}_q[G_{n+1}]\)-module. The above decomposition yields
    \[
    M_{n+1} = M_{\mathrm{old}} \oplus M_{\mathrm{new}},
    \]
    where \(M_{\mathrm{old}}\) is the submodule corresponding to the \(\mathbb{Z}_q[G_n]\)-factor (hence a \(\mathbb{Z}_q[G_n]\)-module) and \(M_{\mathrm{new}}\) is a module over \(\prod_i \mathcal{O}_{n+1,i}\).

    \noindent\textbf{Step 3. Fixed points under the last layer.}
    By Galois theory, \(F_n = F_{n+1}^{G_{n+1}/G_n}\), so
    \[
    M_n = M_{n+1}^{G_{n+1}/G_n} = M_{\mathrm{old}}^{G_{n+1}/G_n} \oplus M_{\mathrm{new}}^{G_{n+1}/G_n}.
    \]
    As just noted, \(G_{n+1}/G_n\) acts trivially on \(M_{\mathrm{old}}\); hence \(M_{\mathrm{old}}^{G_{n+1}/G_n} = M_{\mathrm{old}}\).
    On an \(\mathcal{O}_{n+1,i}\)-summand of \(M_{\mathrm{new}}\), a generator \(\gamma \in G_{n+1}/G_n\) acts as multiplication by \(\zeta_i\). For any \(x\) in this summand, \(\gamma x = x\) implies \((\zeta_i-1)x = 0\). Because \(\zeta_i \not\equiv 1 \pmod{\mathfrak{q}}\) (with \(\mathfrak{q}\) the maximal ideal of \(\mathcal{O}_{n+1,i}\)) and \(q \neq p\), the element \(\zeta_i-1\) is a unit in \(\mathcal{O}_{n+1,i}\); thus \(x = 0\). Therefore \(M_{\mathrm{new}}^{G_{n+1}/G_n} = 0\).
    We obtain the crucial identity
    \[
    M_n = M_{\mathrm{old}},
    \]
    and consequently
    \[
    M_{n+1} = M_n \oplus M_{\mathrm{new}}. \tag{1}
    \]

    \medskip
    \noindent\textbf{Step 4. Constraints on the new part.}
    The module \(M_{\mathrm{new}}\) is a finitely generated torsion module over the discrete valuation rings \(\mathcal{O}_{n+1,i}\). It decomposes into cyclic summands of the form \(\mathcal{O}_{n+1,i}/q^{a_j}\mathcal{O}_{n+1,i}\).
    As \(\mathcal{O}_{n+1,i}\) is unramified over \(\mathbb{Z}_q\), we have an isomorphism of \(\mathbb{Z}_q\)-modules
    \[
    \mathcal{O}_{n+1,i}/q^a \cong (\mathbb{Z}/q^a\mathbb{Z})^{d_{n+1}}.
    \]
    Hence each cyclic summand requires at least \(d_{n+1}\) generators as a \(\mathbb{Z}_q\)-module. Since \(M_{n+1}\) (and therefore \(M_{\mathrm{new}}\)) can be generated by at most two elements, we obtain:

    \begin{itemize}
        \item If \(d_{n+1} \ge 3\), then any non‑zero \(M_{\mathrm{new}}\) would need at least 3 generators, impossible. Thus \(M_{\mathrm{new}} = 0\); by (1) we are in case~1.
        \item If \(d_{n+1} = 2\), then \(M_{\mathrm{new}}\) can contain at most one cyclic \(\mathcal{O}_{n+1,i}\)-summand. Hence \(M_{\mathrm{new}} \cong \mathcal{O}_{n+1,i}/q^a \cong (\mathbb{Z}/q^a\mathbb{Z})^2\) as a \(\mathbb{Z}_q\)-module. Equation (1) gives
        \[
        M_{n+1} \cong M_n \times (\mathbb{Z}/q^a\mathbb{Z})^2.
        \]
        If \(M_n \neq 0\), then \(M_n\) contributes at least one generator, so the total number of \(\mathbb{Z}_q\)-generators of \(M_{n+1}\) would be at least \(1+2=3\), contradiction. Hence \(M_n = 0\), and we are in case~3 with \(a = b\).
        \item If \(d_{n+1} = 1\), then each \(\mathcal{O}_{n+1,i}/q^{a_j}\) is isomorphic to \(\mathbb{Z}/q^{a_j}\mathbb{Z}\) as a \(\mathbb{Z}_q\)-module. By the two‑generator restriction, the possibilities for \(M_{\mathrm{new}}\) are:
        \begin{itemize}
            \item \(M_{\mathrm{new}} = 0\) (case~1);
            \item \(M_{\mathrm{new}} \cong \mathbb{Z}/q^a\mathbb{Z}\) (case~2);
            \item \(M_{\mathrm{new}} \cong \mathbb{Z}/q^a\mathbb{Z} \times \mathbb{Z}/q^b\mathbb{Z}\). Here the generator count is already 2, so necessarily \(M_n = 0\) (otherwise the total would exceed 2). This is case~3.
        \end{itemize}
    \end{itemize}
    All possibilities are exhausted, completing the proof.
\end{proof}

Now we turn to the case where \(q = p\). In this situation, the group algebra \(\mathbb{F}_p[\Gal{F_n}{F}]\) is no longer semisimple, so the structure theorem used in the previous lemma does not apply. Nevertheless, we have the following result (see \cite[Exercises 1.13]{greenberg2001}).

\begin{lemma}\label{lemma:grothpatternp=ell}
    Let \(E\) be an elliptic curve defined over a number field \(F\), and let \(L/F\) be a \(\mathbb{Z}_p\)-extension with intermediate subfields \(F_n\) for \(n\ge 1\). If \(E(F)_{\mathrm{tors}}[p^\infty] = 0\), then \(E(L)_{\mathrm{tors}}[p^\infty] = 0\).
\end{lemma}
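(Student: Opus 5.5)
It suffices to show that \(E(F_n)[p^\infty]=0\) for every \(n\ge 0\), since \(E(L)_{\mathrm{tors}}[p^\infty]=\bigcup_n E(F_n)[p^\infty]\). I will argue by induction on \(n\). The case \(n=0\) is the hypothesis. For the inductive step it is enough to treat a single cyclic layer \(F_{n+1}/F_n\) of degree \(p\): I will show that if \(E(F_n)[p^\infty]=0\), then \(E(F_{n+1})[p^\infty]=0\).

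Put \(G=\Gal{F_{n+1}}{F_n}\cong\mathbb{Z}/p\mathbb{Z}\) and \(M=E(F_{n+1})[p^\infty]\). Then \(M\) is a finite abelian \(p\)-group with an action of \(G\), and by Galois theory
\[
M^G=E(F_n)[p^\infty]=0 .
\]
Assume \(M\neq 0\). Its \(p\)-torsion \(M[p]\) is a nonzero \(G\)-stable subgroup, and hence a nonzero \(\mathbb{F}_p\)-vector space on which the \(p\)-group \(G\) acts. The key step is the standard fact that a \(p\)-group acting on a nonzero finite \(p\)-group has a nonzero fixed point. This follows from the orbit-counting congruence \(|M[p]|\equiv |M[p]^G|\pmod p\): each non-trivial orbit has size divisible by \(p\), and \(|M[p]|\) is a positive power of \(p\). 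So \(M[p]^G\neq 0\), and therefore \(M^G\neq0\), which is a contradiction. Hence \(M=0\), completing the induction.

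The only point that needs care is that the fixed-point argument uses no module structure beyond the \(G\)-action on a finite \(p\)-group. Finiteness holds because \(E(F_{n+1})\) is finitely generated, by Mordell–Weil, so \(M\) is finite. I therefore do not expect a real obstacle. The semisimplicity issue mentioned before the lemma is avoided completely, because we only need a nonzero fixed vector, not a decomposition of \(M\). Equivalently, one can phrase the step as: the augmentation ideal of \(\mathbb{F}_p[G]\) is nilpotent, so every nonzero \(\mathbb{F}_p[G]\)-module has nonzero invariants.
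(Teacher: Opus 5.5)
Your proof is correct and is essentially the paper's argument. Both rest on the congruence $|V^G|\equiv|V|\pmod p$ for a $p$-group acting on a nonzero finite $p$-group; the paper applies it once to $\Gal{F_n}{F}$ acting on $E(F_n)[p]$, whereas you go one cyclic layer at a time, which is an inessential difference (also, finiteness of $M[p]$ is immediate from $E[p]\cong(\mathbb{Z}/p\mathbb{Z})^2$, so Mordell--Weil is not needed).
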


\begin{proof}
    We show that \(E(F_n)[p] = 0\) for every \(n \ge 0\). Suppose, for contradiction, that there exists some \(n \ge 1\) with \(E(F_n)[p] \neq 0\), and choose the minimal such \(n\). The Galois group \(G := \Gal{F_n}{F}\) is a finite \(p\)-group acting on the non-trivial finite \(p\)-group \(V := E(F_n)[p]\).

    By the orbit-stabilizer theorem, the number of fixed points satisfies \(|V^G| \equiv |V| \pmod p\). Since \(0 \in V^G\) and \(|V|\) is a positive power of \(p\), we must have \(V^G \neq \{0\}\). But
    \[
    V^G = E(F_n)[p]^{\Gal{F_n}{F}} = E(F)[p] = 0,
    \]
    a contradiction. Hence \(E(F_n)[p] = 0\) for every \(n \ge 0\), and consequently \(E(L)_{\mathrm{tors}}[p^\infty] = 0\).
\end{proof}

\subsection{Quadratic twists of rational elliptic curves}

Let \(E/\mathbb{Q}\) be an elliptic curve and let \(K/\mathbb{Q}\) be a quadratic field.
Denote by \(\chi_K : G_{\mathbb{Q}} \to \{\pm 1\}\) the quadratic character associated with \(K=\mathbb{Q}(\sqrt{d})\) (where \(d\neq 0,1\) is a square‑free integer), so that \(\ker \chi_K = G_K\).

The \emph{quadratic twist of \(E\) by \(K\)} (or by \(\chi_K\)) is the elliptic curve \(E^{(K)}/\mathbb{Q}\) given by
\[
E^{(K)} : d y^2 = x^3 + A x + B,
\qquad\text{where}\quad E : y^2 = x^3 + A x + B.
\]

\begin{proposition}\label{prop:twist-basic}
\leavevmode
\begin{enumerate}
    \item \(E^{(K)}\) becomes isomorphic to \(E\) over \(K\) via
    \[
    \phi: E \times_{\mathbb{Q}} K \longrightarrow E^{(K)} \times_{\mathbb{Q}} K,
    \qquad (x,y) \longmapsto (x, \sqrt{d}\,y),
    \]
    which is an isomorphism defined over \(K\). In particular, for an algebraic extension $L/K$, $E(L)=E^{(K)}(L)$.
    \item For every \(n \ge 1\), the mod \(n\) Galois representations satisfy
    \[
    \rho_{E^{(K)}, n} \cong \chi_K \otimes \rho_{E,n},
    \]
    where \(\rho_{E,n} : G_{\mathbb{Q}} \to \operatorname{GL}(E[n])\).
\end{enumerate}
\end{proposition}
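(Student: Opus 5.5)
Both parts are verified directly on a Weierstrass model, and the Galois-representation statement is then read off from the isomorphism in (1).

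\textbf{Part (1).} Write \(E: y^2 = x^3 + Ax + B\) and \(E^{(K)}: dy^2 = x^3 + Ax + B\).

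\begin{itemize}
\item Take \((x,y)\) on \(E\) and set \((x', y') = (x, \sqrt{d}\,y)\). Then
\[
d\,y'^2 = d \cdot d\,y^2 ,
\]
which does not equal \(x^3 + Ax + B\). So the map as literally written does not land on \(E^{(K)}\).
\item The correct map is \((x,y) \mapsto (x, y/\sqrt{d})\), since then \(d\,(y/\sqrt{d})^2 = y^2 = x^3 + Ax + B\).
\item Equivalently, \(\phi\) as written is the inverse map \(E^{(K)} \to E\), because \(y'^2 = d\,y^2\).
\end{itemize}

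I would state the map in the correct direction and note that either direction gives the isomorphism. It is a morphism of projective curves sending the point at infinity to the point at infinity, and it has an inverse of the same shape. Its coefficients involve only \(\sqrt{d} \in K\), so it is defined over \(K\).

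For any algebraic extension \(L/K\), \(\phi\) sends \(L\)-points to \(L\)-points, and so does its inverse. This gives an isomorphism of groups \(E(L) \cong E^{(K)}(L)\). The group laws are preserved because an isomorphism of curves fixing the origin is a group isomorphism. Here "\(=\)" in the statement means identification via \(\phi\).

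\textbf{Part (2).} The key step is a cocycle computation for \(\phi\).

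1. For \(\sigma \in G_{\mathbb{Q}}\) and \(P = (x,y) \in E[n]\), we have \(\sigma(\sqrt{d}) = \chi_K(\sigma)\sqrt{d}\).
2. Hence \(\sigma(\phi(P)) = \chi_K(\sigma)\,\phi(\sigma P)\) in terms of the \(y\)-coordinate.
3. On a Weierstrass model, negation is \((x,y) \mapsto (x,-y)\). So multiplying the \(y\)-coordinate by \(-1\) is exactly the action of \([-1]\), and step 2 reads
\[
\sigma(\phi(P)) = [\chi_K(\sigma)]\,\phi(\sigma P).
\]
4. Since \(\phi\) is a group homomorphism, it restricts to an isomorphism \(E[n] \to E^{(K)}[n]\). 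Transporting the Galois action through it gives
\[
\rho_{E^{(K)},n}(\sigma) = \chi_K(\sigma)\,\rho_{E,n}(\sigma),
\]
with \(\chi_K\) viewed in \(\{\pm 1\} \subset (\mathbb{Z}/n\mathbb{Z})^\times\). This is \(\chi_K \otimes \rho_{E,n}\).

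The only subtlety is the sign and direction convention for \(\phi\) noted in part (1); the rest is routine.
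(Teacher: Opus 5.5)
Your proof is correct. The paper states this proposition without proof, as a standard fact, and your direct check on the Weierstrass model is exactly the standard justification it relies on: the identity $\sigma(\phi(P))=[\chi_K(\sigma)]\,\phi(\sigma P)$ makes $\phi|_{E[n]}$ a $G_{\mathbb{Q}}$-isomorphism from $E[n]$ with the twisted action $\chi_K\otimes\rho_{E,n}$ onto $E^{(K)}[n]$.

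You are also right that the printed map $(x,y)\mapsto(x,\sqrt{d}\,y)$ actually goes from $E^{(K)}$ to $E$; the map $E\to E^{(K)}$ is $(x,y)\mapsto(x,y/\sqrt{d})$. This slip is harmless for the later uses in Lemmas~\ref{lemma:p-twist} and~\ref{lemma:ell-twist}. Since $\sigma(1/\sqrt{d})=\chi_K(\sigma)/\sqrt{d}$ as well, the identity $Q^\sigma=\chi_K(\sigma)\psi(\sigma)Q$ holds with either direction.
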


Let \(P\in E(\overline{\mathbb{Q}})\) be a point of order \(q^n\) (for a prime \(q\) and \(n\ge 1\)) such that the cyclic subgroup \(\langle P\rangle\) is \(G_{\mathbb{Q}}\)-stable. The action of \(G_{\mathbb{Q}}\) on \(\langle P\rangle\) yields a character
\[
\rho_P: G_{\mathbb{Q}}\longrightarrow \operatorname{Aut}(\langle P\rangle) \cong (\mathbb{Z}/q^n \mathbb{Z})^\times,
\]
whose fixed field is \(\mathbb{Q}(P)\). Consequently,
\[
[\mathbb{Q}(P):\mathbb{Q}] \mid \varphi(q^n)=(q-1)q^{n-1}.
\]

The following two lemmas, treating the cases \(q=p\) and \(q\neq p\) respectively, will be used repeatedly.

\begin{lemma}\label{lemma:p-twist}
    Let \(p\) be an odd prime, \(E/\mathbb{Q}\) an elliptic curve, and \(K\) a quadratic field. 
    Let \(K_{\infty}/K\) be the compositum of all \(\mathbb{Z}_p\)-extensions of \(K\), and let \(L/K\) be an intermediate finite field with \(L/\mathbb{Q}\) Galois. 
    Suppose that \(P\in E(L)\) is a torsion point of order \(p^n\) (\(n\ge 1\)) with \(P\notin E(K)\), and that \(\langle P\rangle\) is \(G_{\mathbb{Q}}\)-stable.
    Then one of the following holds:
    \begin{enumerate}
        \item If \(K\not\subset \mathbb{Q}(P)\), then \(\mathbb{Q}(P)\subseteq \mathbb{Q}_{\infty,p}\) (the cyclotomic \(\mathbb{Z}_p\)-extension of \(\mathbb{Q}\)). In particular, \(P\in E(\mathbb{Q}_{\infty,p})\).
        \item If \(K\subset \mathbb{Q}(P)\), then \(\mathbb{Q}(P)\) contains a unique subfield \(F_p\) which is a \(p\)-power degree extension of \(\mathbb{Q}\) unramified outside \(p\), and \(F_p\subseteq \mathbb{Q}_{\infty,p}\). Moreover, the quadratic twist \(E^{(K)}\) possesses a point of order \(p^n\) defined over \(F_p\); hence \(E^{(K)}(\mathbb{Q}_{\infty,p})\) contains a subgroup isomorphic to \(\mathbb{Z}/p^n\mathbb{Z}\).
    \end{enumerate}
\end{lemma}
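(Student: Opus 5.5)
We work with the character \(\rho_P: G_{\mathbb{Q}}\to(\mathbb{Z}/p^n\mathbb{Z})^\times\). Its image is abelian, so \(\mathbb{Q}(P)/\mathbb{Q}\) is abelian with degree dividing \((p-1)p^{n-1}\). Since \(p\) is odd, the image is cyclic. The image therefore splits uniquely as \(\Delta\times\Pi\), with \(|\Delta|\mid p-1\) and \(\Pi\) a \(p\)-group. The corresponding fixed fields give \(\mathbb{Q}(P)=F_{\Delta}F_p\), where \([F_p:\mathbb{Q}]\) is a power of \(p\) and \([F_\Delta:\mathbb{Q}]\) divides \(p-1\).

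\textbf{Step 1: ramification.} Since \(\mathbb{Q}(P)\subseteq L\subseteq K_\infty\), Proposition~\ref{prop:unramifiedoutsidep} shows that \(L/K\) is unramified outside \(p\). Any prime \(\ell\ne p\) ramified in \(\mathbb{Q}(P)/\mathbb{Q}\) must therefore ramify in \(K/\mathbb{Q}\), with ramification index dividing \(2\).

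Next, \(\mathrm{Gal}(L/K)\) is a finite \(p\)-group, so \([\mathbb{Q}(P):\mathbb{Q}(P)\cap K]\) is a power of \(p\). Hence \([F_\Delta:\mathbb{Q}]\) divides \(2\), and \(F_\Delta\subseteq K\). Since \(F_p\subseteq L\), the extension \(F_p/\mathbb{Q}\) has \(p\)-power degree and ramification indices dividing \(2\) at primes \(\ell\neq p\). As \(p\) is odd, \(F_p\) is unramified outside \(p\) (including at infinity, since the degree is odd).

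By Kronecker--Weber, a cyclic \(p\)-power extension of \(\mathbb{Q}\) unramified outside \(p\) lies in \(\mathbb{Q}(\mu_{p^\infty})\). Its \(p\)-power-degree subfields are exactly the layers of \(\mathbb{Q}_{\infty,p}\), because \(\mathrm{Gal}(\mathbb{Q}(\mu_{p^\infty})/\mathbb{Q})\cong\Delta\times\mathbb{Z}_p\) with \(|\Delta|=p-1\) prime to \(p\). Hence \(F_p\subseteq\mathbb{Q}_{\infty,p}\), and \(F_p\) is unique as the maximal \(p\)-power subextension of \(\mathbb{Q}(P)\).

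\textbf{Step 2: the two cases.} If \(K\not\subset\mathbb{Q}(P)\), then \(F_\Delta\subseteq K\cap\mathbb{Q}(P)=\mathbb{Q}\). So \(\mathbb{Q}(P)=F_p\subseteq\mathbb{Q}_{\infty,p}\), which gives (1).

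If \(K\subset\mathbb{Q}(P)\), then \(K\) must equal \(F_\Delta\), since \(F_p\) has odd degree and cannot contain \(K\). In this case \(\rho_P=\chi_K\cdot\psi\), where \(\psi\) is the \(p\)-power-order part of \(\rho_P\) (with fixed field \(F_p\)) and \(\chi_K\) is the order-\(2\) part. Proposition~\ref{prop:twist-basic}(2), applied to the stable line \(\langle P\rangle\), shows that the image \(P'\) of \(P\) in \(E^{(K)}\) generates a \(G_{\mathbb{Q}}\)-stable cyclic subgroup of order \(p^n\). Its character is \(\chi_K\rho_P=\chi_K^2\psi=\psi\). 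So \(P'\) is fixed by \(\ker\psi=G_{F_p}\), meaning \(P'\in E^{(K)}(F_p)\subseteq E^{(K)}(\mathbb{Q}_{\infty,p})\).

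The hypothesis \(P\notin E(K)\) is not needed beyond ensuring that the statement is nontrivial.

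\textbf{Main obstacle.} The most delicate step is the Kronecker--Weber argument that an odd \(p\)-power abelian extension unramified outside \(p\) sits inside \(\mathbb{Q}_{\infty,p}\). This needs care with the fact that \(\mathbb{Q}(P)\) is abelian over \(\mathbb{Q}\), not just over \(K\). It also requires carefully justifying that the ramification of \(\ell\neq p\) in \(\mathbb{Q}(P)\) comes only from \(K\), which follows from the multiplicativity of ramification indices along \(\mathbb{Q}\subset K\subset L\).
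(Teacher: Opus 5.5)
Your proposal is correct and takes essentially the paper's route. Both arguments use multiplicativity of ramification indices along $\mathbb{Q}\subset K\subset L$ to show the $p$-power part of $\mathbb{Q}(P)$ is unramified outside $p$, then Kronecker--Weber to place it in $\mathbb{Q}_{\infty,p}$, and then the twisted character $\chi_K\rho_P$ being trivial on $G_{F_p}$. The only difference is organizational: you split the cyclic image of $\rho_P$ into its prime-to-$p$ and $p$-parts up front, so that $\rho_P=\chi_K\psi$ yields the twist in one line, whereas the paper first splits on whether $K\subset\mathbb{Q}(P)$ and checks $\chi_K\psi(\sigma)=1$ elementwise.
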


\begin{proof}
Set \(F = \mathbb{Q}(P)\). The \(G_{\mathbb{Q}}\)-stability gives a character \(\psi: G_{\mathbb{Q}}\to \operatorname{Aut}(\langle P\rangle)\cong (\mathbb{Z}/p^n\mathbb{Z})^\times\) with \(\overline{\mathbb{Q}}^{\ker\psi}=F\); thus \(F/\mathbb{Q}\) is abelian of degree dividing \(\varphi(p^n)=p^{n-1}(p-1)\).

Since \(L/K\) is an intermediate field of \(K_{\infty}/K\), it is a finite \(p\)-extension unramified outside \(p\) (Proposition~\ref{prop:allZpext} and Proposition~\ref{prop:unramifiedoutsidep}). Because \(P\in E(L)\) and \(K\subset L\), we have \(F\subset L\) and \(FK\subset L\); hence \([FK:K]\) is a power of \(p\) and \(FK/K\) is unramified outside \(p\).

We distinguish two cases.

\emph{Case 1: \(K\not\subset F\).} Then \(F\cap K=\mathbb{Q}\) and \([F:\mathbb{Q}] = [FK:K] = p^m\) for some \(m\ge 1\). The field diagram is
\[
\begin{tikzcd}
 & FK \ar[dl, -, "2"'] \ar[dr, -, "p^m"] & \\
F \ar[dr, -, "p^m"'] & & K \ar[dl, -, "2"] \\
 & \mathbb{Q} &
\end{tikzcd}
\]
For any prime \(q\neq p\), denote by \(e_q(\cdot)\) the ramification index.
Since \(FK/K\) is unramified outside \(p\), we have \(e_q(FK/K)=1\) and therefore
\[
e_q(FK/\mathbb{Q}) = e_q(FK/K)\,e_q(K/\mathbb{Q}) = e_q(K/\mathbb{Q})\in\{1,2\}.
\]
On the other hand, \(e_q(FK/\mathbb{Q}) = e_q(FK/F)\,e_q(F/\mathbb{Q})\).
Because \(F/\mathbb{Q}\) is a \(p\)-extension, \(e_q(F/\mathbb{Q})\) is a power of \(p\);
and \(e_q(FK/F)\mid [FK:F] = [K:\mathbb{Q}] = 2\), so \(e_q(FK/F)\in\{1,2\}\).
Thus \(e_q(FK/\mathbb{Q})\) is a product of a power of \(p\) and \(1\) or \(2\).
But we already know it is \(1\) or \(2\).  Since \(p\) is odd, this forces
\(e_q(F/\mathbb{Q})=1\).  Hence \(F/\mathbb{Q}\) is an abelian \(p\)-extension
unramified outside \(p\).

By the Kronecker--Weber theorem (see~\cite[Theorem 14.1]{Washington1997}), \(F\subseteq \mathbb{Q}(\zeta_{p^\infty})\). Since \(p\) is odd, \(\operatorname{Gal}(\mathbb{Q}(\zeta_{p^\infty})/\mathbb{Q}) \cong \Delta \times \mathbb{Z}_p\) with \(\Delta\) of order \(p-1\); any \(p\)-power quotient kills \(\Delta\), so \(F\) lies in the unique \(\mathbb{Z}_p\)-extension \(\mathbb{Q}_{\infty,p}\). Thus \(P\in E(\mathbb{Q}_{\infty,p})\).

\emph{Case 2: \(K\subset F\).} Then \([F:K]=[FK:K]=p^k\) with \(k\ge 1\) (otherwise \(F=K\), contradicting \(P\notin E(K)\)). Hence \([F:\mathbb{Q}]=2p^k\). As \(p\) is odd, \(\operatorname{Gal}(F/\mathbb{Q})\cong C_2 \times C_{p^k}\). Let \(F_p\) be the fixed field of the \(C_2\)-factor; then
\[
\begin{tikzcd}
 & F \ar[dl,  -,"p^k"'] \ar[dr, -, "2"] & \\
K \ar[dr,  -,"2"'] & & F_p \ar[dl,  -,"p^k"] \\
 & \mathbb{Q} &
\end{tikzcd}
\]
so that \(F=KF_p\), \(K\cap F_p=\mathbb{Q}\), and \([F_p:\mathbb{Q}]=p^k\).

The extension \(F/K\) is unramified outside \(p\) and is isomorphic to
\(F_p/\mathbb{Q}\) via restriction.  Let \(q\neq p\).  Because
\(e_q(F/K)=1\) and \(e_q(K/\mathbb{Q})\in\{1,2\}\), we have
\(e_q(F/\mathbb{Q})\in\{1,2\}\).  But \(F/\mathbb{Q}\) has degree \(2p^k\)
and \(F_p\) is the subfield of degree \(p^k\); hence \(e_q(F_p/\mathbb{Q})\)
divides \(p^k\) and also divides \(e_q(F/\mathbb{Q})\).  As \(p\) is odd,
\(e_q(F_p/\mathbb{Q})\) must be \(1\).  Therefore \(F_p/\mathbb{Q}\) is
unramified outside \(p\), and Kronecker--Weber gives \(F_p\subseteq
\mathbb{Q}_{\infty,p}\).

It remains to relate \(P\) to the twist. Let \(\phi: E_K \to E^{(K)}_K\) be the isomorphism \((x,y)\mapsto (x,\sqrt{d}\,y)\) and set \(Q = \phi(P)\). For any \(\sigma\in G_{\mathbb{Q}}\),
\[
Q^\sigma = \chi_K(\sigma)\psi(\sigma)\, Q,
\]
so \(Q\) is fixed by \(\ker(\chi_K\psi)\). We claim that \(F_p\) lies in the fixed field of \(\chi_K\psi\). Any \(\sigma\in \operatorname{Gal}(\overline{\mathbb{Q}}/F_p)\) acts on \(F\) through \(\operatorname{Gal}(F/F_p)\cong C_2\). If this action is non‑trivial, then \(\sigma\) acts non‑trivially on \(K\) (since \(F=KF_p\)), hence \(\chi_K(\sigma)=-1\); because \(\psi\) is faithful on \(\operatorname{Gal}(F/F_p)\), we also have \(\psi(\sigma)=-1\). Thus \(\chi_K\psi(\sigma)=1\). Therefore \(Q\in E^{(K)}(F_p)\). As \(Q\) has order \(p^n\), the group \(E^{(K)}(\mathbb{Q}_{\infty,p})\) contains a point of order \(p^n\).
\end{proof}

\begin{lemma}\label{lemma:ell-twist}
    Let \(p\) be an odd prime, \(E/\mathbb{Q}\) an elliptic curve, and \(K\) a quadratic field. 
    Let \(K_{\infty}/K\) be the compositum of all \(\mathbb{Z}_p\)-extensions of \(K\), and let \(L/K\) be an intermediate field with \(L/\mathbb{Q}\) Galois. 
    Let \(q\neq p\) be another odd prime. Suppose that \(P\in E(L)\) is a torsion point of order \(q^n\) (\(n\ge 1\)) with \(P\notin E(K)\), and that \(\langle P\rangle\) is \(G_{\mathbb{Q}}\)-stable.
    Then the following hold:
    \begin{enumerate}
        \item \(p\) divides \(q-1\).
        \item Let \(F=\mathbb{Q}(P)\). Then \([FK:K]=p^{m}\) for some \(m\ge 1\).
        \item If \(K\not\subset F\), then \(F/\mathbb{Q}\) is an abelian \(p\)-extension unramified outside \(p\) and \(F\subseteq \mathbb{Q}_{\infty,p}\); in particular \(P\in E(\mathbb{Q}_{\infty,p})\).
        \item If \(K\subset F\), then \(F\) contains a unique subfield \(F_p\) which is a \(p\)-power degree extension of \(\mathbb{Q}\) unramified outside \(p\), with \(F_p\subseteq \mathbb{Q}_{\infty,p}\). Moreover, the quadratic twist \(E^{(K)}\) has a point of order \(q^n\) defined over \(F_p\); hence \(E^{(K)}(\mathbb{Q}_{\infty,p})\) contains \(\mathbb{Z}/q^n\mathbb{Z}\).
    \end{enumerate}
\end{lemma}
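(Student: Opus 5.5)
We mirror the proof of Lemma~\ref{lemma:p-twist}, with one new ingredient for part (1). As there, $G_{\mathbb{Q}}$-stability of $\langle P\rangle$ gives a character $\psi:G_{\mathbb{Q}}\to(\mathbb{Z}/q^n\mathbb{Z})^\times$ whose fixed field is $F=\mathbb{Q}(P)$. Hence $F/\mathbb{Q}$ is abelian and $[F:\mathbb{Q}]\mid q^{n-1}(q-1)$.

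\textbf{Part (2).} Since $P\in E(L)$ and $K\subset L$, we have $FK\subseteq L$. Every finite subextension of $L/K$ inside $K_\infty/K$ has $p$-power degree and is unramified outside $p$ (Propositions~\ref{prop:allZpext} and~\ref{prop:unramifiedoutsidep}). So $[FK:K]=p^m$. If $m=0$ then $F\subseteq K$ and $P\in E(K)$, contradicting the hypothesis; hence $m\ge1$.

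\textbf{Part (1).} Note that $\operatorname{Gal}(FK/K)$ restricts isomorphically onto $\operatorname{Gal}(F/F\cap K)$, a subgroup of $\operatorname{Gal}(F/\mathbb{Q})$. Therefore $p^m$ divides $[F:\mathbb{Q}]$, which divides $q^{n-1}(q-1)$. Since $q\neq p$ and $m\ge1$, this forces $p\mid q-1$. This is the only genuinely new step compared with Lemma~\ref{lemma:p-twist}, and it is short. The main point to check is that $F\cap K$ can only be $\mathbb{Q}$ or $K$, so the degree comparison works in both cases.

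\textbf{Parts (3) and (4).} These follow the two cases of Lemma~\ref{lemma:p-twist} word for word.

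\emph{If $K\not\subset F$:} then $[F:\mathbb{Q}]=[FK:K]=p^m$. The same ramification-index argument applies at every prime $\ell\neq p$, using that $FK/K$ is unramified at $\ell$, that $e_\ell(K/\mathbb{Q})\in\{1,2\}$, and that $p$ is odd. It gives $e_\ell(F/\mathbb{Q})=1$. This includes $\ell=q$: only ramification at primes away from $p$ matters, and $q$ is one of them. Kronecker--Weber then puts the abelian $p$-extension $F$ inside $\mathbb{Q}(\zeta_{p^\infty})$. Its $p$-power degree forces $F\subseteq\mathbb{Q}_{\infty,p}$.

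\emph{If $K\subset F$:} then $\operatorname{Gal}(F/\mathbb{Q})\cong C_2\times C_{p^m}$, since $p$ is odd and the group is abelian of order $2p^m$. Let $F_p$ be the fixed field of the $C_2$ factor. The same ramification argument shows $F_p/\mathbb{Q}$ is unramified outside $p$, so $F_p\subseteq\mathbb{Q}_{\infty,p}$.

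\textbf{The twist.} Set $Q=\phi(P)$, where $\phi$ is the isomorphism of Proposition~\ref{prop:twist-basic}. Then $Q^\sigma=\chi_K(\sigma)\psi(\sigma)Q$. Take $\sigma$ fixing $F_p$ but acting nontrivially on $F$. Such a $\sigma$ is nontrivial on $K$, so $\chi_K(\sigma)=-1$. Because $\psi$ is faithful on $\operatorname{Gal}(F/\mathbb{Q})$, $\psi(\sigma)$ has order $2$ in the cyclic group $(\mathbb{Z}/q^n\mathbb{Z})^\times$; here $q$ odd matters. So $\psi(\sigma)=-1$ and $\chi_K\psi(\sigma)=1$. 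Hence $Q\in E^{(K)}(F_p)$ has order $q^n$, and $E^{(K)}(\mathbb{Q}_{\infty,p})$ contains a subgroup isomorphic to $\mathbb{Z}/q^n\mathbb{Z}$.
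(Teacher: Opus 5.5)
Your proposal is correct and follows the paper's proof essentially step for step: $p^m=[FK:K]=[F:F\cap K]$ divides $[F:\mathbb{Q}]\mid q^{n-1}(q-1)$ for (1)--(2), the same ramification-index argument with Kronecker--Weber for $F$ in (3) and for $F_p$ in (4), and the identity $Q^\sigma=\chi_K(\sigma)\psi(\sigma)Q$ for the twist. One small precision: an abelian group of order $2p^m$ need not be $C_2\times C_{p^m}$. Here, however, $\operatorname{Gal}(F/\mathbb{Q})$ embeds in the cyclic group $(\mathbb{Z}/q^n\mathbb{Z})^\times$, and in any case you only need the uniqueness of its subgroup of order $2$; this cyclicity is also what justifies your step $\psi(\sigma)=-1$.
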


\begin{proof}
Let \(\psi: G_{\mathbb{Q}}\to \operatorname{Aut}(\langle P\rangle)\cong (\mathbb{Z}/q^n\mathbb{Z})^\times\) be the character coming from the Galois action, and set \(F=\mathbb{Q}(P)\). Then \(F/\mathbb{Q}\) is abelian of degree dividing \(\varphi(q^n)=q^{n-1}(q-1)\).

As in the proof of Lemma~\ref{lemma:p-twist}, \(L/K\) is a finite \(p\)-extension unramified outside \(p\), and \(F\subset L\), \(FK\subset L\) imply \([FK:K]=p^m\) with \(m\ge 0\). The hypothesis \(P\notin E(K)\) forces \(m\ge 1\) (otherwise \(F\subset K\)). Hence \([F:\mathbb{Q}] = p^m \cdot d\) where \(d=[F\cap K:\mathbb{Q}]\in\{1,2\}\). Since this degree divides \(q^{n-1}(q-1)\) and \(q\neq p\), we must have \(p\mid(q-1)\). This proves (1) and (2).

We now distinguish the two possibilities for \(d\).

\emph{Case 1: \(K\not\subset F\) (\(d=1\)).} Then \(F\cap K=\mathbb{Q}\), \([F:\mathbb{Q}]=p^m\), and the field diagram is identical to Case~1 of Lemma~\ref{lemma:p-twist}. The ramification argument shows that \(F/\mathbb{Q}\) is unramified outside \(p\); hence \(F\) is an abelian \(p\)-extension, and Kronecker--Weber (see~\cite{Washington1997}) gives \(F\subseteq \mathbb{Q}_{\infty,p}\). This yields (3).

\emph{Case 2: \(K\subset F\) (\(d=2\)).} Then \([F:\mathbb{Q}]=2p^m\) and \([F:K]=p^m\). As \(p\) is odd, \(\operatorname{Gal}(F/\mathbb{Q})\cong C_2 \times C_{p^m}\). Let \(F_p\) be the fixed field of the \(C_2\)-factor; then \(F_p/\mathbb{Q}\) is a \(p\)-extension of degree \(p^m\), \(F=KF_p\), and \(K\cap F_p=\mathbb{Q}\), exactly as in Case~2 of Lemma~\ref{lemma:p-twist}. The same ramification argument shows that \(F_p\) is unramified outside \(p\), and Kronecker--Weber yields \(F_p\subseteq \mathbb{Q}_{\infty,p}\).

Finally, set \(Q=\phi(P)\in E^{(K)}\) where \(\phi(x,y)=(x,\sqrt{d}\,y)\). For \(\sigma\in G_{\mathbb{Q}}\),
\[
Q^\sigma = \chi_K(\sigma)\psi(\sigma)\,Q.
\]
By the same faithful character argument as before, \(\operatorname{Gal}(\overline{\mathbb{Q}}/F_p)\) fixes \(Q\); thus \(Q\in E^{(K)}(F_p)\). The order of \(Q\) is \(q^n\), completing the proof of (4).
\end{proof}

\subsection{Known results on torsion subgroups}
In this subsection, we collect some known results on torsion subgroups, which will be used to prove the main theorem in this article.

In this subsection, let $E$ be an elliptic curve over \(\mathbb{Q}\). First of all is Mazur's theorem mentioned in Introduction.

\begin{theorem}[\cite{mazurModularCurvesEisenstein1977}, Theorem (8)]\label{theorem:mazurQ}
    Then
\[
E(\mathbb{Q})_{\mathrm{tors}} \simeq
\begin{cases}
\mathbb{Z}/N\mathbb{Z} & \text{with } 1 \leq N \leq 10 \text{ or } N = 12, \\
\mathbb{Z}/2\mathbb{Z} \times \mathbb{Z}/2N\mathbb{Z} & \text{with } 1 \leq N \leq 4.
\end{cases}
\]
\end{theorem}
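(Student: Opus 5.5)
This is Mazur's theorem, which the paper quotes as an external input rather than proving it. The outline below is how I would reconstruct its proof; I expect to use it in the paper only as a black box. The argument splits into three parts:
\begin{enumerate}
\item ruling out prime orders \(\ell\ge 11\);
\item controlling prime powers and composite orders built from \(2,3,5,7\);
\item determining the non-cyclic possibilities.
\end{enumerate}

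\textbf{Non-cyclic part.} By the structure of \(E[n]\cong(\mathbb{Z}/n\mathbb{Z})^2\) and Lemma~\ref{lemma:rootsofunity}, if \(E[n]\subseteq E(\mathbb{Q})\) then \(\mu_n\subset\mathbb{Q}\), so \(n\le 2\). Hence \(E(\mathbb{Q})_{\mathrm{tors}}\cong \mathbb{Z}/m\mathbb{Z}\) or \(\mathbb{Z}/2\mathbb{Z}\times\mathbb{Z}/2m\mathbb{Z}\). Everything then reduces to bounding the cyclic factor, and to bounding \(m\) in the presence of full \(2\)-torsion.

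\textbf{Step 1: no rational point of prime order \(\ell\ge 11\).} A point \(P\in E(\mathbb{Q})\) of order \(N\) gives a non-cuspidal rational point on the modular curve \(X_1(N)\). For prime \(N=\ell\ge 11\) I would follow Mazur's Eisenstein-ideal method.
\begin{itemize}
\item Take the Eisenstein quotient \(\tilde J\) of \(J_0(\ell)\) and show \(\tilde J(\mathbb{Q})\) is finite. This uses a descent relative to the Eisenstein prime, where the kernel is the Shimura subgroup or cuspidal group.
\item Consider the map \(X_0(\ell)\to\tilde J\), \(x\mapsto [x-\infty]\). Show it is a formal immersion at the cusp \(\infty\) in characteristic \(q\), for a suitable small prime \(q\) of good reduction for \(E\) (e.g.\ \(q=3\)).
\item The rational \(\ell\)-torsion point forces the image of \(E\) on \(X_0(\ell)\) to reduce to the cusp at \(q\). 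This uses that \(\ell\)-torsion cannot inject into \(\tilde E(\mathbb{F}_q)\) when \(\ell>(1+\sqrt q)^2\), together with the multiplicative-reduction analysis.
\item Since \(\tilde J(\mathbb{Q})\) is finite and torsion injects under reduction, the formal immersion gives equality with the cusp, a contradiction.
\end{itemize}
This step is the main obstacle and is the genuinely deep input. In particular, proving finiteness of the Eisenstein quotient's Mordell--Weil group requires the full theory of the Hecke algebra at Eisenstein primes.

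\textbf{Step 2: prime powers and composite orders.} Once the prime divisors are restricted to \(\{2,3,5,7\}\), I would bound exponents and mixed products by showing that \(X_1(N)(\mathbb{Q})\) consists only of cusps for
\[
N\in\{11,13,14,15,16,18,20,21,24,25,27,35,49\}.
\]
Together with the \(X_1(2,2N)\) cases \(N=5,6\), and the analogous ones for full \(2\)-torsion, this suffices. These curves are either of genus \(0\) with no rational points other than cusps, or elliptic or low-genus curves whose rational points can be listed by \(2\)- or \(3\)-descent. This is the content of the classical work of Kubert, Ogg and Ligozat.

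\textbf{Step 3: realisation.} Finally, each group in the list actually occurs, via Kubert's explicit universal families over the genus-\(0\) curves \(X_1(N)\) and \(X_1(2,2N)\). This completes the classification.
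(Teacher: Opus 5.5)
Your handling of the statement agrees with the paper: it is Mazur's theorem, quoted with a citation and used only as a black box, and nothing more is needed here. The reconstruction you add, however, is not a correct outline as written.

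In Step~1 the auxiliary prime \(q\) must be a prime of good reduction for \(X_0(\ell)\) and \(J_0(\ell)\), not for \(E\). For example \(q=3\) works; oddness is what makes \(\tilde J(\mathbb{Q})_{\mathrm{tors}}\to\tilde J(\mathbb{F}_q)\) injective. Your very next bullet shows that \(E\) is forced to have \emph{bad} reduction at \(q\). Your ``multiplicative-reduction analysis'' must also do two further things. It must exclude additive reduction: the prime-to-\(q\) torsion of \(E(\mathbb{Q}_q)\) would then have order at most \(4\). It must also use the Tate curve to show \(\langle P\rangle\neq\mu_\ell\), since Frobenius acts on \(\mu_\ell\) by \(\pm q\not\equiv 1\bmod\ell\). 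This is what determines \emph{which} cusp \(x\) reduces to, and the injectivity and formal-immersion steps must compare \(x\) with that cusp.

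More seriously, Step~1 is vacuous for \(\ell=13\). The curve \(X_0(13)\) has genus \(0\), so \(J_0(13)=0\) and there is no Eisenstein prime (the numerator of \((13-1)/12\) is \(1\)). Mazur instead relies on the Mazur--Tate determination of the rational points of the genus-\(2\) curve \(X_1(13)\). Your Step~2 list does contain \(13\), but that contradicts its stated premise that only the primes \(2,3,5,7\) remain.

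The method you describe for Step~2 is also wrong. \(X_1(N)\) has genus \(0\) exactly for \(N\le 10\) and \(N=12\), i.e.\ precisely the orders that occur, so no curve on your list has genus \(0\). In any case a genus-\(0\) curve with a rational cusp is \(\mathbb{P}^1_{\mathbb{Q}}\), so ``genus \(0\) with no rational points other than cusps'' can never happen. Moreover \(X_1(25)\), \(X_1(27)\), \(X_1(35)\), \(X_1(49)\) have genera \(12\), \(13\), \(25\), \(69\), so listing their points by a \(2\)- or \(3\)-descent is not a realistic route.

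A route that does work:
\begin{itemize}
\item For \(N\in\{11,13,14,15,16,18\}\), \(X_1(N)\) has genus \(1\) or \(2\) with finite Jacobian Mordell--Weil group. Likewise \(X_1(2,10)\) and \(X_1(2,12)\) are rank-\(0\) elliptic curves. All of these are handled directly.
\item For \(N\in\{20,24,35,49\}\), pass to \(X_0(N)\), whose rational points are all cusps. So there is not even a rational cyclic \(N\)-isogeny.
\item For \(N\in\{21,27\}\), check by hand the finitely many non-cuspidal rational points of the genus-\(1\) curves \(X_0(21)\) and \(X_0(27)\).
\item \(N=25\) needs its own argument, since \(X_0(25)\) has genus \(0\).
\end{itemize}

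Finally, Step~3 is unnecessary. The statement as quoted only asserts that \(E(\mathbb{Q})_{\mathrm{tors}}\) belongs to the list, not that every group in it occurs.
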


The following theorem is about torsion subgroups of rational elliptic curves over quadratic fields.

\begin{theorem}[\cite{Najman2016}, Theorem 2]\label{theorem:NajmanK}
Let \( E/\mathbb{Q} \) be an elliptic curve, and let \( K \) be a quadratic number field. Then
\[
E(K)_{\mathrm{tors}} \simeq
\begin{cases}
\mathbb{Z}/N\mathbb{Z} & \text{with } 1 \leq N \leq 10 \text{ or } N = 12, 15, 16, \\
\mathbb{Z}/2\mathbb{Z} \times \mathbb{Z}/2N\mathbb{Z} & \text{with } 1 \leq N \leq 6, \\
\mathbb{Z}/3\mathbb{Z} \times \mathbb{Z}/3N\mathbb{Z} & \text{with } N = 1, 2, \text{ only if } K = \mathbb{Q}(\sqrt{-3}), \\
\mathbb{Z}/4\mathbb{Z} \times \mathbb{Z}/4\mathbb{Z} & \text{only if } K = \mathbb{Q}(\sqrt{-1}).
\end{cases}
\]
\end{theorem}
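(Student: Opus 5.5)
The plan is to start from the classification of torsion over arbitrary quadratic fields, due to Kamienny and Kenku--Momose. That list consists of \(\mathbb{Z}/N\mathbb{Z}\) for \(1\le N\le 16\) or \(N=18\), \(\mathbb{Z}/2\mathbb{Z}\times\mathbb{Z}/2N\mathbb{Z}\) for \(N\le 6\), \(\mathbb{Z}/3\mathbb{Z}\times\mathbb{Z}/3N\mathbb{Z}\) for \(N=1,2\), and \(\mathbb{Z}/4\mathbb{Z}\times\mathbb{Z}/4\mathbb{Z}\). It then suffices to eliminate the cyclic orders \(11,13,14,18\), and to show that the non-cyclic groups of type \((3,3N)\) and \((4,4)\) force \(K=\mathbb{Q}(\sqrt{-3})\) and \(K=\mathbb{Q}(\sqrt{-1})\) respectively. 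The point is that \(E\) is defined over \(\mathbb{Q}\), so \(E(K)\) can be analysed through \(E(\mathbb{Q})\) and the quadratic twist \(E^{(K)}(\mathbb{Q})\). The realizability of each group in the list is a separate matter of exhibiting examples and is not part of this plan.

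\textbf{Step 1 (odd part).} Let \(\sigma\) generate \(\Gal{K}{\mathbb{Q}}\). For odd \(q\), the module \(E(K)[q^\infty]\) splits under the idempotents \((1\pm\sigma)/2\) as
\[
E(K)[q^\infty]\cong E(\mathbb{Q})[q^\infty]\oplus E^{(K)}(\mathbb{Q})[q^\infty].
\]
This holds because the \((-1)\)-eigenspace is identified with \(E^{(K)}(\mathbb{Q})\) through the isomorphism of Proposition~\ref{prop:twist-basic}. By Mazur (Theorem~\ref{theorem:mazurQ}), neither summand contains a point of order \(11\) or \(13\), which rules out \(N=11,13\). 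The same theorem bounds the \(3\)-part of each summand by \(\mathbb{Z}/9\mathbb{Z}\) and the \(5\)- and \(7\)-parts by \(\mathbb{Z}/5\mathbb{Z}\) and \(\mathbb{Z}/7\mathbb{Z}\). If \(E(K)[3]\cong(\mathbb{Z}/3\mathbb{Z})^2\), then Lemma~\ref{lemma:rootsofunity} gives \(\mu_3\subset K\), so \(K=\mathbb{Q}(\sqrt{-3})\).

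\textbf{Step 2 (even part and mixed orders).} A point of order \(2\) is a root of the rational cubic defining \(E[2]\). If that root lies in a quadratic field, the cubic is reducible over \(\mathbb{Q}\) and hence has a rational root. It follows that \(E(K)[2]\neq 0\) forces \(E(\mathbb{Q})[2]\neq 0\), and this rational \(2\)-torsion point is shared with every twist.

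Now suppose \(\mathbb{Z}/14\mathbb{Z}\subseteq E(K)\). By Step~1 the \(7\)-torsion point lies in \(E(\mathbb{Q})\) or in \(E^{(K)}(\mathbb{Q})\). Combined with the rational \(2\)-torsion point, this puts \(\mathbb{Z}/14\mathbb{Z}\) inside \(E(\mathbb{Q})\) or \(E^{(K)}(\mathbb{Q})\), contradicting Mazur.

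For \(\mathbb{Z}/18\mathbb{Z}\), the \(9\)-part is cyclic of order \(9\). I will argue that it cannot split between the two summands: a point of order \(9\) needs the whole \(3\)-chain in one eigenspace, since a sum of elements of order \(3\) has order \(3\). So the \(9\)-part lies in one of \(E(\mathbb{Q})\), \(E^{(K)}(\mathbb{Q})\). Adding the rational \(2\)-torsion point again gives a rational \(\mathbb{Z}/18\mathbb{Z}\), contradicting Mazur. The same reasoning excludes \(\mathbb{Z}/3\mathbb{Z}\times\mathbb{Z}/18\mathbb{Z}\) and similar groups that are not already excluded by the Kamienny--Kenku--Momose list.

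Finally, \(\mathbb{Z}/4\mathbb{Z}\times\mathbb{Z}/4\mathbb{Z}\) contains \(E[4]\), so Lemma~\ref{lemma:rootsofunity} gives \(i\in K\) and hence \(K=\mathbb{Q}(\sqrt{-1})\).

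\textbf{Main obstacle.} The delicate point is the \(2\)-primary part, where the eigenspace splitting fails because \(2\) is not invertible. Here I rely on the Kamienny--Kenku--Momose list to bound the \(2\)-power structure, which caps the cyclic part at \(\mathbb{Z}/16\mathbb{Z}\) and the non-cyclic part at \(\mathbb{Z}/2\mathbb{Z}\times\mathbb{Z}/12\mathbb{Z}\). The real work is making sure that every combination involving \(2\)-torsion and a \(7\)- or \(9\)-part is reduced to a rational point on \(E\) or on \(E^{(K)}\). If that reduction breaks down for some case, the fallback is Lemma~\ref{lemma:admits-n-isogeny}: it produces a rational isogeny of the relevant degree, and one then checks the finitely many \(j\)-invariants on the corresponding \(X_0(n)\), for instance the two non-cuspidal \(j\)-invariants on \(X_0(14)\).
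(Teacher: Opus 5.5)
Your proposal is correct. The paper gives no proof of this statement and simply quotes Najman, and your argument is essentially Najman's original one: start from the Kamienny--Kenku--Momose list, then eliminate $\mathbb{Z}/11\mathbb{Z}$, $\mathbb{Z}/13\mathbb{Z}$, $\mathbb{Z}/14\mathbb{Z}$ and $\mathbb{Z}/18\mathbb{Z}$. The tools are the decomposition $E(K)[n]\cong E(\mathbb{Q})[n]\oplus E^{(K)}(\mathbb{Q})[n]$ for odd $n$, the fact that $E(K)[2]\neq 0$ forces a rational $2$-torsion point shared by $E$ and $E^{(K)}$, and Mazur's theorem.

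One phrasing should be tightened in the $\mathbb{Z}/18\mathbb{Z}$ case: the real reason is that the order of an element of a direct sum is the l.c.m.\ of the orders of its components, so a point of order $9$ has a component of order $9$ in one eigenspace. The $X_0(14)$ fallback is never needed.
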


Now we introduce theorems about torsion subgroups of rational elliptic curves over infinite extensions.

\begin{theorem}[\cite{chou2016}, Theorem 1.2]\label{theorem:chou-quartic-galois}
Let \( E/\mathbb{Q} \) be an elliptic curve, and let \( K \) be a quartic Galois extension of \( \mathbb{Q} \).  
Then \( E(K)_{\mathrm{tors}} \) is isomorphic to one of the following groups:

\[
\begin{aligned}
&\mathbb{Z}/N_1\mathbb{Z}, \quad N_1 = 1, \dots, 16, \; N_1 \neq 11, 14,\\
&\mathbb{Z}/2\mathbb{Z} \oplus \mathbb{Z}/2N_2\mathbb{Z}, \quad N_2 = 1, \dots, 6, 8,\\
&\mathbb{Z}/3\mathbb{Z} \oplus \mathbb{Z}/3N_3\mathbb{Z}, \quad N_3 = 1, 2,\\
&\mathbb{Z}/4\mathbb{Z} \oplus \mathbb{Z}/4N_4\mathbb{Z}, \quad N_4 = 1, 2,\\
&\mathbb{Z}/5\mathbb{Z} \oplus \mathbb{Z}/5\mathbb{Z},\\
&\mathbb{Z}/6\mathbb{Z} \oplus \mathbb{Z}/6\mathbb{Z}.
\end{aligned}
\]
\end{theorem}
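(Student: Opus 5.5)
The plan is to split according to the Galois group, which is either \(C_4\) or \(C_2\times C_2\), and treat the odd and \(2\)-primary parts of \(E(K)_{\mathrm{tors}}\) separately before assembling the finitely many admissible combinations.

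\textbf{Step 1 (biquadratic case).} When \(\operatorname{Gal}(K/\mathbb{Q})\cong C_2\times C_2\), write \(K=\mathbb{Q}(\sqrt{d_1},\sqrt{d_2})\). For an odd prime \(q\), the group \(E(K)[q^\infty]\) is a \(\mathbb{Z}_q[C_2\times C_2]\)-module, and this ring is a product of four copies of \(\mathbb{Z}_q\) indexed by the characters. Combined with Proposition~\ref{prop:twist-basic}, this gives
\[
E(K)[q^\infty]\cong E(\mathbb{Q})[q^\infty]\oplus E^{(d_1)}(\mathbb{Q})[q^\infty]\oplus E^{(d_2)}(\mathbb{Q})[q^\infty]\oplus E^{(d_1d_2)}(\mathbb{Q})[q^\infty].
\]
Mazur's theorem (Theorem~\ref{theorem:mazurQ}) then restricts each summand. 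The two-generator bound and the Weil pairing (Lemma~\ref{lemma:rootsofunity}) allow at most two nonzero summands, and full \(q\)-torsion forces \(\mu_q\subset K\), so \(q\in\{3,5\}\). For \(q=5\), the field \(\mathbb{Q}(\mu_5)\) is cyclic of degree \(4\), so full \(5\)-torsion can only occur in the \(C_4\) case.

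\textbf{Step 2 (cyclic case).} When \(\operatorname{Gal}(K/\mathbb{Q})\cong C_4\), let \(k\) be the quadratic subfield. Najman's Theorem~\ref{theorem:NajmanK} gives \(E(k)_{\mathrm{tors}}\). The growth from \(k\) to \(K\) is controlled by the same eigenspace argument for \(\mathbb{Z}_q[C_4]\), now with a degree-\(2\) factor \(\mathbb{Z}_q[i]\) when \(q\equiv 3\pmod 4\). This is the analogue of Lemma~\ref{lemma:growthpattern=ellnotp}, with \(p=2\) playing the role of the group order.

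A new point \(P\) of prime order \(q\) has \([\mathbb{Q}(P):\mathbb{Q}]\mid 4\). The subgroup \(\langle P\rangle\) is not automatically \(G_{\mathbb{Q}}\)-stable, but some Galois-stable subgroup produces a \(q\)-isogeny over \(\mathbb{Q}\) in the cyclic case. In either case Lemma~\ref{lemma:admits-n-isogeny} and Theorem~\ref{theorem:isogeniesoverQ} bound \(q\). Since \(q-1\) must be divisible by the orbit size, this leaves \(q\in\{2,3,5,7,13\}\), with \(q=13\) occurring only in the \(C_4\) case, together with \(q=11\), which needs a separate check.

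\textbf{Step 3 (2-primary part).} Here I would use the explicit \(2\)-division polynomials and the classification of \(\mathbb{Q}(E[2^n])\). Full \(4\)-torsion needs \(\mu_4\subset K\), and \(\mathbb{Z}/8\times\mathbb{Z}/8\) is excluded by the Weil pairing. Lemma~\ref{lemma:admits-n-isogeny} applied to \(\mathbb{Z}/4\times\mathbb{Z}/4N_4\) yields an \(N_4\)-isogeny over \(\mathbb{Q}\), which bounds \(N_4\le 2\). The list of possible \(\mathbb{Z}/2\times\mathbb{Z}/2N_2\) is bounded in the same way.

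\textbf{Step 4 (assembly).} Finally I would combine the primary parts and exclude mixed groups such as \(\mathbb{Z}/11\), \(\mathbb{Z}/14\), \(\mathbb{Z}/2\times\mathbb{Z}/14\) and \(\mathbb{Z}/2\times\mathbb{Z}/20\). I expect this to be the main obstacle. Each exclusion amounts to showing that a specific modular curve has no non-cuspidal rational points, for instance \(X_0(14)\) with added level structure, or \(X_0(11)\) with its three non-cuspidal points checked by hand. The first thing I would try is to translate each candidate into the existence of a rational \(N\)-isogeny plus a twist condition, and then check these against the finitely many \(j\)-invariants (the non-cuspidal points of \(X_0(N)\)) for \(N\in\{11,14,15,17,21\}\).
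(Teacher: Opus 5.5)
The paper gives no proof of this statement; it is quoted from Chou. Your outline does not supply one either: the steps you actually carry out break down exactly where genuine arithmetic input is needed.

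\textbf{Step 2: wrong primes survive.} For $q\ge 7$ the group $E(K)[q]$ is cyclic (since $\mu_q\not\subset K$) and $G_{\mathbb{Q}}$-stable. So a generator $P$ has $[\mathbb{Q}(P):\mathbb{Q}]$ dividing $\gcd(4,q-1)$. For $q\in\{11,19,43,67,163\}$ this gcd is $2$, so the character on $\langle P\rangle$ is trivial or quadratic. Then $P$ becomes a rational point of order $q$ on $E$ or on a twist $E^{(d)}$, which Theorem~\ref{theorem:mazurQ} forbids; in particular $q=11$ needs no separate check. But $q=17$ and $q=37$ pass this test ($4\mid 16$, $4\mid 36$), and your list silently drops them. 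Excluding them requires information on the kernel characters of the finitely many curves with a rational $17$- or $37$-isogeny. An example of such input is Lozano-Robledo's theorem that a point of order $17$ (resp.\ $37$) on $E/\mathbb{Q}$ is not defined over any field of degree less than $8$ (resp.\ $12$). This is not a marginal case: the paper invokes the statement precisely to exclude $q=37$ in Proposition~\ref{prop:p=2-exclude}.

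\textbf{Step 3: the $2$-primary bounds do not follow.} The Weil pairing does not exclude $\mathbb{Z}/8\mathbb{Z}\times\mathbb{Z}/8\mathbb{Z}$, because $\mathbb{Q}(\zeta_8)=\mathbb{Q}(i,\sqrt{2})$ is itself a quartic Galois field containing $\mu_8$. You need the nontrivial fact that $\mathbb{Q}(E[8])\neq\mathbb{Q}(\zeta_8)$ for every $E/\mathbb{Q}$, which comes from the classification of $2$-adic images (Rouse--Zureick-Brown) or of abelian division fields. Likewise, Lemma~\ref{lemma:admits-n-isogeny} only gives a rational $N_4$- or $N_2$-isogeny, and rational $3$-, $4$- and $16$-isogenies occur in infinite families. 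So nothing in your argument bounds $N_4\le 2$ or restricts $N_2$ to $\{1,\dots,6,8\}$. Groups such as $\mathbb{Z}/4\mathbb{Z}\times\mathbb{Z}/12\mathbb{Z}$, $\mathbb{Z}/4\mathbb{Z}\times\mathbb{Z}/16\mathbb{Z}$ and $\mathbb{Z}/2\mathbb{Z}\times\mathbb{Z}/32\mathbb{Z}$ are untouched, and the eigenspace splitting of Step~1 is unavailable at $q=2$.

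\textbf{Step 4: the difficulty is misplaced.} The cases you single out as the main obstacle are elementary:
\begin{itemize}
\item If $E(K)[2]\neq 0$ then $E(\mathbb{Q})[2]\neq 0$; otherwise the $2$-division cubic is irreducible and a root in $K$ would generate a cubic subfield of a quartic field.
\item Twisting does not change $2$-torsion.
\item By the gcd argument above, a $7$-torsion point over $K$ lies on some $E^{(d)}(\mathbb{Q})$.
\end{itemize}
Hence $\mathbb{Z}/14\mathbb{Z}\subseteq E^{(d)}(\mathbb{Q})$, contradicting Mazur. No modular-curve computation is needed for $\mathbb{Z}/11$, $\mathbb{Z}/14$ or $\mathbb{Z}/2\times\mathbb{Z}/14$. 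The genuinely hard exclusions ($17$, $37$, $(\mathbb{Z}/8\mathbb{Z})^2$ and the $2$-primary growth) are exactly the ones your proposal leaves without an argument.
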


\begin{theorem}[\cite{chou2021},Theorem 1.1, 1.2, 1.3]\label{theorem:chouQinfty}
Let \(E/\mathbb{Q}\) be an elliptic curve, \(p\) a prime number, and let \(\mathbb{Q}_{\infty,p}\) denote the cyclotomic \(\mathbb{Z}_p\)-extension of \(\mathbb{Q}\). Then:

\begin{enumerate}
\item[(i)] If \(p \ge 5\), then
\[
E(\mathbb{Q}_{\infty,p})_{\mathrm{tors}} = E(\mathbb{Q})_{\mathrm{tors}}.
\]

\item[(ii)] If \(p=2\), then \(E(\mathbb{Q}_{\infty,2})_{\mathrm{tors}}\) is isomorphic to one of the following groups:
\[
\mathbb{Z}/N\mathbb{Z} \quad (1\le N\le 10,\ \text{or } N=12),
\]
or
\[
\mathbb{Z}/2\mathbb{Z} \oplus \mathbb{Z}/2N\mathbb{Z} \quad (1\le N\le 4).
\]
Moreover, for each group \(G\) in the above list, there exists an elliptic curve \(E/\mathbb{Q}\) such that \(E(\mathbb{Q}_{\infty,2})_{\mathrm{tors}} \simeq G\).

\item[(iii)] If \(p=3\), then \(E(\mathbb{Q}_{\infty,3})_{\mathrm{tors}}\) is isomorphic to one of the following groups:
\[
\mathbb{Z}/N\mathbb{Z} \quad (1\le N\le 10,\ \text{or } N=12,21,27),
\]
or
\[
\mathbb{Z}/2\mathbb{Z} \oplus \mathbb{Z}/2N\mathbb{Z} \quad (1\le N\le 4).
\]
Moreover, for each group \(G\) in the above list, there exists an elliptic curve \(E/\mathbb{Q}\) such that \(E(\mathbb{Q}_{\infty,3})_{\mathrm{tors}} \simeq G\).
\end{enumerate}
\end{theorem}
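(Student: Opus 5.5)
The plan is to control each \(q\)-primary part of \(E(\mathbb{Q}_{\infty,p})_{\mathrm{tors}}\) separately, using three facts: \(\mathbb{Q}_{\infty,p}/\mathbb{Q}\) is Galois and abelian, it is totally real, and each finite layer is a cyclic \(p\)-extension unramified outside \(p\).

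\textbf{Step 1: reduce to cyclic groups.} By Lemma~\ref{lemma:rootsofunity}, if \(E[q]\subseteq E(\mathbb{Q}_{\infty,p})\) then \(\mu_q\subset\mathbb{Q}_{\infty,p}\). Since the field is real, this forces \(q=2\). So for odd \(q\) the group \(E(\mathbb{Q}_{\infty,p})[q^\infty]\) is cyclic, and for \(q=2\) it is \(\mathbb{Z}/2^a\times\mathbb{Z}/2^b\).

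\textbf{Step 2: turn growth into a rational isogeny.} Let \(q\) be odd and suppose \(q\)-torsion grows. Take the minimal finite layer \(F_n\) where it grows, and a generator \(P\) of the cyclic group \(E(F_n)[q^\infty]\). Because \(F_n/\mathbb{Q}\) is Galois and this group is cyclic, \(\langle P\rangle\) is \(G_{\mathbb{Q}}\)-stable. Hence \(E\) has a rational \(q^k\)-isogeny, and \(\mathbb{Q}(P)\) is cut out by a character \(\psi\) with values in \((\mathbb{Z}/q^k)^\times\). For \(q\neq p\), the degree \([\mathbb{Q}(P):\mathbb{Q}]\) is a nontrivial power of \(p\) dividing \(q^{k-1}(q-1)\), so \(p\mid q-1\).

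Combining this with Theorem~\ref{theorem:isogeniesoverQ} leaves a finite list of pairs \((p,q)\). Examples are \((5,11)\), \((3,7)\), \((3,13)\), \((3,19)\), \((3,37)\), \((3,43)\), \((3,67)\), \((3,163)\), and the corresponding pairs with \(p=2\).

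\textbf{Step 3: rule out or realize each pair.} For each pair I will analyse the two isogeny characters \(\psi,\psi'\), which satisfy \(\psi\psi'=\omega_q\), the mod \(q\) cyclotomic character. We need \(\psi\) unramified outside \(p\) and of \(p\)-power order. Then \(\psi'\) equals \(\omega_q\) up to a \(p\)-power character, so \(\psi'\) is tamely ramified at \(q\) with large order. For each \(j\)-invariant admitting a \(q\)-isogeny with \(q\ge 11\) (finitely many, the CM and sporadic points, together with their twists), one can check the characters directly.

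For \(q=11\), the three \(j\)-invariants give isogeny characters whose \(5\)-part is ramified at \(11\). Then \(\mathbb{Q}(P)\not\subset\mathbb{Q}_{\infty,5}\), and \(p\ge5\) contributes no growth for \(q\neq p\).

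For the \(p\)-part with \(p\ge5\), Lemma~\ref{lemma:grothpatternp=ell} handles the case \(E(\mathbb{Q})[p]=0\). When \(p\in\{5,7\}\) and \(E(\mathbb{Q})[p]\neq0\), growth to \(p^2\) would require a rational \(p^2\)-isogeny whose character on the second layer has \(p\)-power order and is unramified outside \(p\). I will exclude this with the \(25\)- and \(49\)-isogeny classification: \(49\) is not on the list, and \(25\)-isogenies have known characters.

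For \(p=2,3\), the same method yields candidate groups. Intersecting them with Mazur-type bounds coming from Lemma~\ref{lemma:admits-n-isogeny} gives exactly the listed groups. The new orders \(21\) and \(27\) are realized by curves with rational \(21\)- and \(27\)-isogenies, namely conductor \(162\) and \(27\)a, whose isogeny characters lie in \(\mathbb{Q}_{\infty,3}\).

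\textbf{Main obstacle.} I expect the hardest part to be the case-by-case exclusion in Step~3, that is, showing the \(p\)-part of \(\psi\) is ramified at \(q\) for every exceptional isogeny. My first attempt would use reduction at small good primes \(\ell\): if \(\psi\) factored through \(\mathbb{Q}_{\infty,p}\), then \(\psi(\mathrm{Frob}_\ell)\) would be a \(p\)-power root of unity with \(\psi'(\mathrm{Frob}_\ell)=\ell\,\psi(\mathrm{Frob}_\ell)^{-1}\). This contradicts \(a_\ell\equiv\psi+\psi'\pmod q\) for a suitable \(\ell\).
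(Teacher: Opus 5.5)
The paper gives no proof of this statement. It is quoted from \cite{chou2021} and used as a black box: part (i) rules out $\mathbb{Z}/25\mathbb{Z}$, and part (iii) rules out $13$-, $19$-, $43$-, $67$- and $163$-torsion. Your sketch therefore has to stand on its own.

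\textbf{What works.} Steps 1 and 2 are correct, and for $p\ge 5$ the plan can be completed along your lines, with three additions.
\begin{itemize}
\item Your list omits the pairs $(7,43)$ and $(11,67)$. Both are CM cases, and the $p$-part of $\psi$ is ramified at $q$, so they fall to the same check.
\item Step 2 does not cover the $2$-primary part. It is handled by Lemma~\ref{lemma:growthpattern=ellnotp}, since $\operatorname{ord}_p(2)\ge 3$ for $p\ge 5$.
\item The $\mathbb{Z}/25\mathbb{Z}$ case, which you leave vague, does yield to your Frobenius test. A rational point of order $5$ forces semistable reduction away from $5$. Multiplicative reduction at $2$ or $3$ is incompatible with $\psi\equiv 1\pmod 5$. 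Then $\psi=\omega_{25}^{4k}$ violates the Hasse bound at $\ell=2$ or $\ell=3$.
\end{itemize}

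\textbf{The gap: parts (ii) and (iii).}

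\emph{The case $q=13$.} Step 3 relies on the claim that for $q\ge 11$ only finitely many $j$-invariants carry a rational $q$-isogeny. This is false for $q=13$, since $X_0(13)$ has genus $0$, and $(3,13)$ is on your own list; $(2,13)$ also arises from your Step 2 for $p=2$. This is exactly the case the paper uses (iii) for.

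Your fallback cannot close an infinite family. Take $\psi$ the cubic character of conductor $9$ with $\psi(\mathrm{Frob}_2)=9\in\mathbb{F}_{13}$; the other cubic character is already excluded at $\ell=2$. The congruence $a_\ell\equiv\psi(\mathrm{Frob}_\ell)+\ell\,\psi(\mathrm{Frob}_\ell)^{-1}\pmod{13}$ then only forces good reduction at $2,5,7$ with $a_2=2$, $a_5=-4$, $a_7=4$. All of these lie within the Hasse bound, so no contradiction appears. What is missing is a global input: that no $E/\mathbb{Q}$ acquires a point of order $13$ over $\mathbb{Q}(\zeta_9)^+$ (resp.\ over $\mathbb{Q}(\sqrt2)$ or $\mathbb{Q}(\zeta_{16})^+$ for $p=2$). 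That amounts to controlling the non-cuspidal points of $X_1(13)$ with rational $j$ over these specific fields.

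\emph{Composite groups.} For $\mathbb{Z}/m\mathbb{Z}\times\mathbb{Z}/mn\mathbb{Z}$, Lemma~\ref{lemma:admits-n-isogeny} only produces an $n$-isogeny, so "intersecting with Mazur-type bounds" excludes nothing here.
\begin{itemize}
\item $\mathbb{Z}/18\mathbb{Z}$ over $\mathbb{Q}_{\infty,3}$ gives an $18$-isogeny, and $X_0(18)$ has genus $0$.
\item $\mathbb{Z}/2\mathbb{Z}\times\mathbb{Z}/14\mathbb{Z}$ gives only a $7$-isogeny. Curves with a $7$-isogeny and cyclic cubic $2$-division field form an infinite genus-$0$ family.
\end{itemize}

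\emph{The $2$-primary part for $p=2$.} This escapes your argument entirely. Since $(\mathbb{Z}/2^k\mathbb{Z})^\times$ is a $2$-group, Step 2 imposes no constraint; for example $\mathbb{Z}/16\mathbb{Z}$ does occur over quadratic fields. Each of these cases needs a field-specific argument, and none is supplied.

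\emph{Existence.} The existence half of (ii) and (iii) is addressed only for $\mathbb{Z}/21\mathbb{Z}$ and $\mathbb{Z}/27\mathbb{Z}$. Even for those, you would still need to show that no further growth occurs anywhere in the tower.
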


Moreover, we quote the result of Avc{\i} which we will generalize.

\begin{theorem}[\cite{AVCI2026153}, Theorem 1, Theorem 5.1, 5.2, 5.3]\label{theorem:Avci}
Let \(E/\mathbb{Q}\) be an elliptic curve, let \(K\) be a quadratic field, \(p\) a prime, and \(L/K\) a \(\mathbb{Z}_p\)-extension. Then

\begin{enumerate}
\item[(1)] If \(p>5\), then
\[
E(L)_{\mathrm{tors}} = E(K)_{\mathrm{tors}}.
\]

\item[(2)] If \(p=5\), then either
\[E(L)_{\mathrm{tors}} \cong \mathbb{Z}/25\mathbb{Z} \quad
\text{or} \quad E(L)_{\mathrm{tors}} = E(K)_{\mathrm{tors}}\]

\item[(3)] If \(p=3\), we distinguish the type of extension:
\begin{itemize}
\item If \(L=K_{\mathrm{cyc}}\) is the cyclotomic \(\mathbb{Z}_3\)-extension, then \(E(K_{\mathrm{cyc}})_{\mathrm{tors}}\) is isomorphic to one of the following groups:
\[
\begin{cases}
\mathbb{Z}/N\mathbb{Z} & \text{with } 1\le N\le 10,\ \text{or } N=12,13,15,16,18,21,27,\\[2mm]
\mathbb{Z}/2\mathbb{Z}\times \mathbb{Z}/2N\mathbb{Z} & \text{with } 1\le N\le 7,\ \text{or } N=9,\\[2mm]
\mathbb{Z}/3\mathbb{Z}\times \mathbb{Z}/3N\mathbb{Z} & \text{with } N=1,2,3,\ \text{only if } K=\mathbb{Q}(\sqrt{-3}),\\[2mm]
\mathbb{Z}/4\mathbb{Z}\times \mathbb{Z}/4\mathbb{Z} & \text{only if } K=\mathbb{Q}(\sqrt{-1}).
\end{cases}
\]

\item If \(K=\mathbb{Q}(\sqrt{-d})\) is imaginary quadratic with \(d\) a square-free positive integer and \(d\ne 1,3\), and \(L=K_{\mathrm{anti}}\) is the anticyclotomic \(\mathbb{Z}_3\)-extension, then \(E(K_{\mathrm{anti}})_{\mathrm{tors}}\) is isomorphic to one of the following groups:
\[
\begin{cases}
\mathbb{Z}/N\mathbb{Z} & \text{with } 1\le N\le 10,\ \text{or } N=12,15,16,\\[2mm]
\mathbb{Z}/2\mathbb{Z}\times \mathbb{Z}/2N\mathbb{Z} & \text{with } 1\le N\le 7,\ \text{or } N=9.
\end{cases}
\]
\end{itemize}
\end{enumerate}
\end{theorem}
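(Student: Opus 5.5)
The plan is to work one prime \(q\) at a time, splitting into \(q\neq p\) and \(q=p\), and to reduce every possible torsion growth to one of three obstructions: an isogeny over \(\mathbb{Q}\) (Theorem~\ref{theorem:isogeniesoverQ}), a root of unity forced by the Weil pairing (Lemma~\ref{lemma:rootsofunity}), or a known classification (Theorem~\ref{theorem:NajmanK} over \(K\), Theorem~\ref{theorem:chouQinfty} over \(\mathbb{Q}_{\infty,p}\)). Throughout, write \(L=\bigcup K_n\). Any torsion point of \(E(L)\) lies in some layer \(K_n\), and \([K_n:\mathbb{Q}]=2p^n\), so it suffices to control growth layer by layer.

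\textbf{Case \(q\neq p\).} By Lemma~\ref{lemma:growthpattern=ellnotp}, growth from \(K_n\) to \(K_{n+1}\) requires \(d_{n+1}=\operatorname{ord}_{p^{n+1}}(q)\le 2\), that is, \(p^{n+1}\mid q^2-1\). I would separate two subcases.
\begin{itemize}
\item \emph{Growth by a full \((\mathbb{Z}/q^a)^2\).} Then \(E[q]\subseteq E(K_{n+1})\). Lemma~\ref{lemma:rootsofunity} gives \(\mu_q\subset K_{n+1}\), so \(q-1\mid 2p^{n+1}\). Combined with \(p\ge 5\), this leaves only a few explicit pairs \((p,q)\). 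I would rule these out by noting that full \(q\)-torsion over a field that is abelian-by-quadratic forces an image of \(\rho_{E,q}\) too small for \(E/\mathbb{Q}\).
\item \emph{Growth by a single cyclic factor.} I expect this subcase to be the main obstacle. When \(L/\mathbb{Q}\) is Galois, Lemma~\ref{lemma:admits-n-isogeny} shows that the cyclic part is essentially \(G_{\mathbb{Q}}\)-stable, giving a \(q\)-isogeny over \(\mathbb{Q}\). Lemma~\ref{lemma:ell-twist} then forces \(p\mid q-1\), and either \(E\) or the twist \(E^{(K)}\) acquires a point of order \(q\) over \(\mathbb{Q}_{\infty,p}\). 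For \(p\ge 5\) this contradicts Theorem~\ref{theorem:chouQinfty}(i), because that point would already be rational and \(q\ge 11\) is excluded by Mazur's theorem.

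When \(L\) is not Galois over \(\mathbb{Q}\), I would first try replacing \(L\) by the composite \(L\cdot\sigma(L)\subset K_\infty\), which is Galois over \(\mathbb{Q}\), and then use the eigenspace decomposition of \(\Gal{K_\infty}{K}\) to push the point into \(K_{\mathrm{cyc}}\) or \(K_{\mathrm{anti}}\). In the anticyclotomic case the twist trick is unavailable, so there I would instead use degree bounds for points of order \(q\) on rational curves. Specifically, \([\mathbb{Q}(P):\mathbb{Q}]\) must be divisible by roughly \((q-1)/2\) unless \(E\) has CM or a rational \(q\)-isogeny, and I would play this against \([\mathbb{Q}(P):\mathbb{Q}]\mid 2p^n\).
\end{itemize}

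\textbf{Case \(q=p\).} By Lemma~\ref{lemma:grothpatternp=ell}, \(p\)-torsion can appear in \(L\) only if \(E(K)[p]\neq 0\). Theorem~\ref{theorem:NajmanK} then forces \(p\le 7\). So for \(p>7\) nothing can grow, and for \(p=7\) the possible torsion is \(E(K)_{\mathrm{tors}}\cong\mathbb{Z}/7\). Growth to \(\mathbb{Z}/7\times\mathbb{Z}/7\) would put \(\mu_7\) in a pro-\(7\) extension of \(K\), which is impossible because \([\mathbb{Q}(\mu_7):\mathbb{Q}]=6\). Growth to \(\mathbb{Z}/49\) would give a \(G_{\mathbb{Q}}\)-stable cyclic subgroup of order \(49\), hence a \(49\)-isogeny over \(\mathbb{Q}\) (after passing to a Galois layer and applying Lemma~\ref{lemma:admits-n-isogeny}), which contradicts Theorem~\ref{theorem:isogeniesoverQ}. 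This proves part (1).

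For \(p=5\) the same argument runs with \(25\in\) the isogeny list. Growth \(\mathbb{Z}/5\to\mathbb{Z}/25\) survives, and I would show it is the only survivor: \(\mathbb{Z}/5\times\mathbb{Z}/5\) is killed by \(\mu_5\), \(\mathbb{Z}/125\) by the isogeny bound, and any mixed group \(\mathbb{Z}/25\times\mathbb{Z}/m\) by the \(q\neq p\) analysis combined with Najman's list. This is exactly the dichotomy in part (2).

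For \(p=3\), the analysis is not a proof by contradiction but a computation of the surviving groups.
\begin{itemize}
\item For \(K_{\mathrm{cyc}}=K\mathbb{Q}_{\infty,3}\), the twist lemmas reduce \(E(K_{\mathrm{cyc}})\) to \(E(\mathbb{Q}_{\infty,3})\) and \(E^{(K)}(\mathbb{Q}_{\infty,3})\). Since \(E(K_{\mathrm{cyc}})\) is built from these two groups, Theorem~\ref{theorem:chouQinfty}(iii) together with Theorem~\ref{theorem:NajmanK} bounds the possible combinations. The list is then pruned using Weil pairing constraints (for example, \(\mathbb{Z}/3\times\mathbb{Z}/3N\) only when \(\mu_3\subset K\)) and isogeny-degree constraints.
\item For \(K_{\mathrm{anti}}\), the layers are generalized dihedral over \(\mathbb{Q}\), so they contain no cyclotomic growth. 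I would check that only \(q=7\), with \(3\mid 7-1\), can contribute the new \(\mathbb{Z}/2\times\mathbb{Z}/14\) and \(\mathbb{Z}/2\times\mathbb{Z}/18\) entries.
\end{itemize}
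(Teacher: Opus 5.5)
The paper does not prove this statement. It is quoted from Avc{\i}, and the only argument the paper adds (in the proof of Theorem~\ref{theorem:p>=5}) is to replace each finite layer by its Galois closure inside \(K_\infty\), which has degree \(2p^b\) over \(\mathbb{Q}\). Your outline for (1) and (2) follows essentially this route and can be completed, but three steps need repair.

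First, in the full-torsion subcase, \(q-1\mid 2p^{n+1}\) does not leave finitely many pairs: \((p,q)=(5,11)\) passes, as does any prime \(q=2p^{n+1}+1\). Use ramification instead. \(K_{n+1}/K\) is unramified outside \(p\), while \(K(\mu_q)/K\) ramifies above \(q\), so \(E[q]\not\subseteq E(K_{n+1})\) for all \(q\ge 5\), \(q\ne p\). No appeal to the image of \(\rho_{E,q}\) is needed.

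Second, for non-Galois \(L\), pass to the Galois closure \(F'\subset K_\infty\). The cyclic \(q\)-part of \(E(F')\) is then \(G_{\mathbb{Q}}\)-stable, and Lemma~\ref{lemma:ell-twist} applies directly; indeed \(K(P)\) is abelian over \(\mathbb{Q}\), so it lies in \(K_{\mathrm{cyc}}\). No anticyclotomic detour is needed.

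Third, in (2), Najman's list does not exclude \(\mathbb{Z}/50\mathbb{Z}\), \(\mathbb{Z}/75\mathbb{Z}\) or \(\mathbb{Z}/2\mathbb{Z}\times\mathbb{Z}/50\mathbb{Z}\). What excludes them is the absence of cyclic \(50\)- and \(75\)-isogenies over \(\mathbb{Q}\). For the last group you must combine the \(25\)-isogeny from Lemma~\ref{lemma:admits-n-isogeny} with the rational \(2\)-isogeny forced by \(E[2]\subseteq E(K)\). Note also that Lemma~\ref{lemma:p-twist} with Theorem~\ref{theorem:chouQinfty}(i) even kills \(\mathbb{Z}/25\mathbb{Z}\), as in Theorem~\ref{theorem:p>=5}.

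\textbf{The genuine gap is part (3), most clearly the anticyclotomic case}, where both your diagnosis and your mechanism are wrong.

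\emph{The twist trick is available.} Lemmas~\ref{lemma:p-twist} and~\ref{lemma:ell-twist} only need a layer Galois over \(\mathbb{Q}\). The layers \(K_{\mathrm{anti},n}\) are Galois over \(\mathbb{Q}\) with dihedral group, whose maximal abelian quotient has order \(2\).

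\emph{Odd torsion cannot grow.} For odd \(q\) and \(d\ne 1,3\), \(E(K_{\mathrm{anti},n})[q^\infty]\) is cyclic (there is no \(\mu_q\)), hence \(G_{\mathbb{Q}}\)-stable. So \(K(P)\) is abelian over \(\mathbb{Q}\) and must equal \(K\). Thus no odd-order torsion grows in \(K_{\mathrm{anti}}\), in particular no \(7\)-torsion. Consequently \(q=7\) cannot produce \(\mathbb{Z}/2\mathbb{Z}\times\mathbb{Z}/14\mathbb{Z}\), and \(\mathbb{Z}/2\mathbb{Z}\times\mathbb{Z}/18\mathbb{Z}\) has no \(7\)-part at all. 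Your fallback degree bound is also vacuous, because growth puts you exactly in the rational-isogeny case it exempts.

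\emph{The new entries come from the \(2\)-primary part, which you never treat.} Since \(\operatorname{ord}_3(2)=2\) and \(\operatorname{ord}_{3^j}(2)\ge 6\) for \(j\ge2\), Lemma~\ref{lemma:growthpattern=ellnotp} allows only one kind of growth. In the first layer, \(E[2^\infty]\) can jump from \(E(K)[2^\infty]=0\) to \(E[2]\), with \(K(E[2])/K\) cyclic cubic; here \(a=1\) because \(\mu_4\not\subset K_{\mathrm{anti}}\) when \(d\ne1\). Combined with the odd groups in Theorem~\ref{theorem:NajmanK}, this gives \(\mathbb{Z}/2\mathbb{Z}\times\mathbb{Z}/2N\mathbb{Z}\) for \(N\in\{1,3,5,7,9,15\}\).

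\emph{You still have to exclude \(N=15\).} Lemma~\ref{lemma:admits-n-isogeny} yields only a \(15\)-isogeny, which Theorem~\ref{theorem:isogeniesoverQ} permits. So you need an explicit check, on the curves with \(\mathbb{Z}/15\mathbb{Z}\) over an imaginary quadratic field (finitely many \(j\)-invariants, since they have a rational \(15\)-isogeny), that \(K(E[2])/K\) cannot be unramified outside \(3\).

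\textbf{The cyclotomic bullet has analogous holes.} Reducing \(E(K_{\mathrm{cyc}})\) to \(E(\mathbb{Q}_{\infty,3})\) and \(E^{(K)}(\mathbb{Q}_{\infty,3})\) requires \(q\) odd, whether you use Lemma~\ref{lemma:ell-twist} or the \(\pm1\)-eigenspaces of \(\Gal{K_{\mathrm{cyc}}}{\mathbb{Q}_{\infty,3}}\). So the \(2\)-part again needs the separate argument above.

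Moreover, take \(K=\mathbb{Q}(\sqrt{-3})\), where \(\mu_{3^\infty}\subset K_{\mathrm{cyc}}\). Neither the Weil pairing nor Lemma~\ref{lemma:admits-n-isogeny} rules out \(\mathbb{Z}/9\mathbb{Z}\times\mathbb{Z}/9\mathbb{Z}\) or \(\mathbb{Z}/3\mathbb{Z}\times\mathbb{Z}/27\mathbb{Z}\), even though each is compatible with Chou's list applied separately to \(E\) and \(E^{(-3)}\). You need a further fact: two \(G_{\mathbb{Q}}\)-stable cyclic subgroups of orders \(a\) and \(b\) with trivial intersection give a cyclic \(ab\)-isogeny over \(\mathbb{Q}\). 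Here that is an \(81\)-isogeny, contradicting Theorem~\ref{theorem:isogeniesoverQ}.

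As it stands, part (3) is a plan for computing some list rather than a derivation of the stated one.
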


\section{The case \(p \ge 5\)}

In this section we prove the case \(p \ge 5\) of the main theorem.

\begin{theorem}[\(p \ge 5\)]\label{theorem:p>=5}
    Let \(E/\mathbb{Q}\) be a rational elliptic curve, let \(K\) be a quadratic field, and let \(K_\infty/K\) be the compositum of all \(\mathbb{Z}_p\)-extensions of \(K\) for a prime \(p \ge 5\). Then
    \[
    E(K)_{\mathrm{tors}} = E(K_\infty)_{\mathrm{tors}}.
    \]
\end{theorem}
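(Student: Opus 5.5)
Since every torsion point of \(E(K_\infty)\) lies in some finite layer, it suffices to show that for every finite intermediate field \(L\) of \(K_\infty/K\) with \(L/\mathbb{Q}\) Galois (for instance the \(n\)-th layer \(K_n\) of \(K_\infty/K\), fixed by \(p^n\Gal{K_\infty}{K}\), which is Galois over \(\mathbb{Q}\) since that subgroup is characteristic) and every prime \(q\), we have \(E(L)[q^\infty]=E(K)[q^\infty]\). Here \(\Gal{L}{K}\) is a finite abelian \(p\)-group, of rank at most \(2\). I would split into the cases \(q=p\) and \(q\neq p\).

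\emph{Case \(q=p\ge 5\).} By Theorem~\ref{theorem:NajmanK}, \(E(K)[p^\infty]\) is either \(0\) or \(\mathbb{Z}/5\mathbb{Z}\) (\(p=5\)), or \(\mathbb{Z}/7\mathbb{Z}\) (\(p=7\)). If it is \(0\), then applying Lemma~\ref{lemma:grothpatternp=ell} to a chain of \(\mathbb{Z}/p\)-steps (the fixed-point argument there works for any finite \(p\)-group acting, so it applies to \(\Gal{L}{K}\) directly) gives \(E(L)[p]=0\). If it is nonzero, I first show \(E(L)[p^\infty]\) is cyclic. Otherwise \(E[p]\subseteq E(L)\), and Lemma~\ref{lemma:rootsofunity} gives \(\mu_p\subset L\). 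Then \([K(\mu_p):K]\) divides both \(p-1\) (or \((p-1)/2\)) and a power of \(p\), which forces \(\mu_p\subset K\). This is impossible for \(p\ge5\).

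So \(E(L)[p^\infty]\cong\mathbb{Z}/p^n\mathbb{Z}\). As \(L/\mathbb{Q}\) is Galois, this cyclic group is \(G_\mathbb{Q}\)-stable, and it is generated by some \(P\). If \(n\ge 2\), then \(P\notin E(K)\), and Lemma~\ref{lemma:p-twist} puts a point of order \(p^n\) in \(E(\mathbb{Q}_{\infty,p})\) or in \(E^{(K)}(\mathbb{Q}_{\infty,p})\). This contradicts Theorem~\ref{theorem:chouQinfty}(i) combined with Mazur's Theorem~\ref{theorem:mazurQ}, which rules out points of order \(25\) or \(49\) over \(\mathbb{Q}\). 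Hence \(n\le 1\), and the equality follows.

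\emph{Case \(q\neq p\).} Suppose \(E(L)[q^\infty]\supsetneq E(K)[q^\infty]\). I would take a minimal cyclic-degree-\(p\) step where growth occurs and use Lemma~\ref{lemma:growthpattern=ellnotp}; since \(\Gal{L}{K}\) has rank up to \(2\), I would argue along a \(\mathbb{Z}_p\)-line or use the same idempotent decomposition for abelian \(p\)-groups. Growth requires a nontrivial character of order \(p^j\) appearing with \(d_j=\operatorname{ord}_{p^j}(q)\le 2\). Hence \(p\mid q-1\) or \(p\mid q+1\), so \(q\ge 2p-1\ge 9\); for \(p=5\) this gives \(q\in\{11,19,29,\dots\}\) via \(q\equiv\pm1 \pmod 5\).

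I then split on whether \(E(L)[q^\infty]\) is cyclic. If \(E[q]\subseteq E(L)\), then \(\mu_q\subset L\) by Lemma~\ref{lemma:rootsofunity}, so \(q-1\) divides \(2p^k\). Since \(q\) is odd, \(q-1=2p^k\), for example \(q=11\) with \(p=5\) or \(q=23\) with \(p=11\). In that situation I would use Lemma~\ref{lemma:admits-n-isogeny} together with Theorem~\ref{theorem:isogeniesoverQ} on the subgroup structure, or the fact that the image of \(\rho_{E,q}\) would have to be tiny (abelian of order \(2p^k\)), which contradicts the surjectivity of the determinant onto \((\mathbb{Z}/q)^\times\) together with a non-abelian image. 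Making this step airtight is the main obstacle. I would try showing that \(\Gal{\mathbb{Q}(E[q])}{\mathbb{Q}}\) abelian forces \(E\) to have CM with \(q\) split, and then use the explicit bounds on CM torsion.

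In the cyclic case \(E(L)[q^\infty]=\langle P\rangle\), which is \(G_\mathbb{Q}\)-stable, so \(E\) has a rational \(q\)-isogeny. Theorem~\ref{theorem:isogeniesoverQ} then forces \(q\in\{11,13,17,19,37,43,67,163\}\). Lemma~\ref{lemma:ell-twist} (note \(p\mid q-1\) here) puts a point of order \(q\) on \(E\) or \(E^{(K)}\) over \(\mathbb{Q}_{\infty,p}\), hence over \(\mathbb{Q}\) by Theorem~\ref{theorem:chouQinfty}(i). This contradicts Mazur's theorem, since \(q\ge 11\).
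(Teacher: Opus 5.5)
\textbf{Gap: the non-cyclic case for $q\neq p$.} Your case $q=p$ and your cyclic case for $q\neq p$ are sound. The non-cyclic case for $q\neq p$, namely $E[q]\subseteq E(L)$, is not proved: you call it ``the main obstacle'' and only sketch a CM/image-of-Galois route. That route does not work as stated. $\Gal{L}{\mathbb{Q}}$ need not be abelian, because the nontrivial element of $\Gal{K}{\mathbb{Q}}$ acts by inversion on the anticyclotomic part of $\Gal{L}{K}$. So you cannot conclude that $\Gal{\mathbb{Q}(E[q])}{\mathbb{Q}}$ is abelian. The degree count $q-1\mid 2p^k$ cannot close the case either. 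For example, $q=11$, $p=5$, $K=\mathbb{Q}(\sqrt{-11})$ passes it, since $[K(\mu_{11}):K]=5$.

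\textbf{The missing idea: ramification.} The fix uses ramification rather than degree. We have $L\subseteq K_\infty$, and $K_\infty/K$ is unramified outside $p$ by Proposition~\ref{prop:unramifiedoutsidep}. So if $\mu_q\subset L$ with $q\neq p$, then $K(\mu_q)/K$ is unramified above $q$. This gives $e_q(K(\mu_q)/\mathbb{Q})\le 2$. But $\mathbb{Q}(\mu_q)\subseteq K(\mu_q)$ is totally ramified at $q$ with index $q-1\ge 10$. Hence $\mu_q\not\subset L$. By Lemma~\ref{lemma:rootsofunity}, $E(L)[q^\infty]$ is therefore always cyclic for $q\neq p$, $q\ge 5$. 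This is the same argument as in Lemma~\ref{lemma:rootsofunity-p=3} and Avc{\i}'s Lemma 4.7.

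\textbf{Comparison with the paper.} With this step inserted, your proof is complete and more self-contained than the paper's. The paper imports Avc{\i}'s theorem, after passing to Galois closures of degree $2p^b$ so that his lemmas apply. It argues directly only to exclude the exceptional $\mathbb{Z}/25\mathbb{Z}$, via Lemma~\ref{lemma:p-twist} and Theorem~\ref{theorem:chouQinfty}, exactly as in your $q=p$ case. For $q\neq p$, your character-theoretic bound ($q\equiv\pm1 \pmod p$, hence $q\ge 11$) together with Lemma~\ref{lemma:ell-twist}, Theorem~\ref{theorem:chouQinfty}(i) and Theorem~\ref{theorem:mazurQ} replaces the appeal to Avc{\i}'s lemmas.
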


\begin{proof}
    In view of Theorem~\ref{theorem:Avci}(1), we must address two issues:
    \begin{itemize}
        \item extending the result from a single \(\mathbb{Z}_p\)-extension to the compositum of all \(\mathbb{Z}_p\)-extensions;
        \item ruling out the exceptional case \(\mathbb{Z}/25\mathbb{Z}\) when \(p = 5\).
    \end{itemize}

    We first address a subtle point in the proof of \cite{AVCI2026153} that is relevant for passing from a single \(\mathbb{Z}_p\)-extension to the compositum. In \cite[Theorem~1.1]{AVCI2026153}, Avci proves that for a fixed \(\mathbb{Z}_p\)-extension \(L/K\) with \(p > 5\), one has \(E(L)_{\mathrm{tors}} = E(K)_{\mathrm{tors}}\). The argument there uses lemmas (specifically Lemmas 4.4 and 4.5 of \cite{AVCI2026153}) that require the finite intermediate fields to be Galois over \(\mathbb{Q}\). However, for an arbitrary \(\mathbb{Z}_p\)-extension \(L/K\) (especially when \(K\) is imaginary quadratic), its finite subfields \(F\) with \(K \subseteq F \subseteq L\) need not be Galois over \(\mathbb{Q}\). 

    Nevertheless, this gap is easily fixed by replacing \(F\) with its Galois closure \(F'\) over \(\mathbb{Q}\). Since \(K/\mathbb{Q}\) is quadratic and \(F/K\) is a cyclic subfield  of \(p\)-power degree in a $\mathbb{Z}_p$-extension, all conjugates of \(F\) lie inside the maximal \(\mathbb{Z}_p\)-extension of \(K\), so the compositum \(F'\) satisfies \([F' : K] = p^b\) for some \(b\). Hence \([F' : \mathbb{Q}] = 2p^b\), which is not divisible by \(3\) or \(4\) for \(p > 5\). Thus \(F'/\mathbb{Q}\) is a finite Galois extension to which all the lemmas of \cite{AVCI2026153} apply, giving \(E(F')_{\mathrm{tors}} = E(K)_{\mathrm{tors}}\). Since \(K \subseteq F \subseteq F'\), we obtain \(E(F)_{\mathrm{tors}} = E(K)_{\mathrm{tors}}\) as well. As every finite subfield of \(L\) shares this property, the conclusion \(E(L)_{\mathrm{tors}} = E(K)_{\mathrm{tors}}\) follows by taking the union. Therefore, the corrected argument establishes Avci's result for every single \(\mathbb{Z}_p\)-extension, without any further modification of the remaining proof.

    Now let \(K_\infty\) be the compositum of all \(\mathbb{Z}_p\)-extensions of \(K\).
    Let \(F/K\) be any finite subextension of \(K_\infty/K\) and let \(F'\) be the Galois
    closure of \(F\) over \(\mathbb{Q}\).
    Since \(K_\infty\) is Galois over \(\mathbb{Q}\), \(F'\subseteq K_\infty\).
    Moreover, \(F'/K\) is a finite Galois extension whose Galois group is a quotient of
    \(\operatorname{Gal}(K_\infty/K)\cong \mathbb{Z}_p^r\), so \([F':K]\) is a power of \(p\), say
    \(p^b\).  Thus \([F':\mathbb{Q}]=2p^b\), and because \(p\ge 5\), this degree is not
    divisible by \(3\) or \(4\).
    
    The  lemmas of \cite{AVCI2026153} therefore apply to the Galois
    extension \(F'/\mathbb{Q}\) and give \(E(F')_{\mathrm{tors}}=E(K)_{\mathrm{tors}}\).
    Since \(K\subseteq F\subseteq F'\), we obtain \(E(F)_{\mathrm{tors}}=E(K)_{\mathrm{tors}}\).
    Passing to the direct limit over all such \(F\) yields
    \(E(K_\infty)_{\mathrm{tors}}=E(K)_{\mathrm{tors}}\).
    
    Thus the first issue is resolved. It remains only to rule out the exceptional \(\mathbb{Z}/25\mathbb{Z}\) torsion in the case \(p = 5\), which we now address.
    
    Next, we exclude the possibility of \(\mathbb{Z}/25\mathbb{Z}\). Assume, for contradiction, that there exists \(P \in E(F)\) of order \(25\) and that \(E(F)_{\mathrm{tors}}[5^\infty] \cong \mathbb{Z}/25\mathbb{Z}\) for some intermediate field \(F/K\) with \(F/\mathbb{Q}\) Galois. The cyclic subgroup \(\langle P \rangle \subset E(F)[25]\) is \(G_{\mathbb{Q}}\)-stable (since \(E(F)_{\mathrm{tors}}[5^\infty]\) is cyclic). Thus \(E\) admits a rational \(25\)-isogeny by Lemma~\ref{lemma:admits-n-isogeny}, and \(G_{\mathbb{Q}}\) acts on \(\langle P \rangle\) through a character
    \[
    \psi : G_{\mathbb{Q}} \to \operatorname{Aut}(\langle P \rangle) \cong (\mathbb{Z}/25\mathbb{Z})^\times.
    \]
    Hence \(\mathbb{Q}(P)/\mathbb{Q}\) is abelian with \([\mathbb{Q}(P) : \mathbb{Q}] \mid \varphi(25) = 20\). Since \(\mathbb{Q}(P) \subseteq K_\infty\), we have
    \[
    [\mathbb{Q}(P) : \mathbb{Q}] \in \{1, 2, 5, 10\}.
    \]
    By Theorem~\ref{theorem:mazurQ} and Theorem~\ref{theorem:NajmanK}, \([\mathbb{Q}(P) : \mathbb{Q}]\) cannot be \(1\) or \(2\); otherwise \(E(K)\) or \(E(\mathbb{Q})\) would contain a torsion point of order \(25\).

    Now \(P \notin E(K)\). Lemma~\ref{lemma:p-twist} can be applied in this case. Then either \(E(\mathbb{Q}_{\infty,p})\) contains a torsion point of order \(25\), or \(E^{(K)}(\mathbb{Q}_{\infty,p})\) contains a torsion point of order \(25\) which is not contained in \(E(\mathbb{Q})\) or \(E^{(K)}(\mathbb{Q})\). This contradicts Theorem~\ref{theorem:chouQinfty}.

    Therefore, we complete the proof.
\end{proof}
\section{The  case: \(p=3\)}\label{section:p=3}
In this section, we discuss the case \(p=3\). We first prepare some lemmas.

\begin{lemma}[\cite{avci2026actaarith}, Lemma 3.1]\label{lemma:avci-lemma3.1}
Let \(E/\mathbb{Q}\) be an elliptic curve, and let \(F\) be a Galois number field of degree \(n\). Let \(q^k\) be an odd prime power. Suppose that
\[
E(F)[q^k] \cong \mathbb{Z}/q^k\mathbb{Z}.
\]
If \(\gcd(n, \varphi(q^k)) = 1\), then
\[
E(F)[q^k] = E(\mathbb{Q})[q^k].
\]
If \(\gcd(n, \varphi(q^k)) = 2\), then there exists a square-free integer \(d\) such that \(\sqrt{d} \in F\) and
\[
E(F)[q^k] = E(\mathbb{Q}(\sqrt{d}))[q^k].
\]
\end{lemma}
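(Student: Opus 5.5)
Let \(P\) generate \(E(F)[q^k]\cong\mathbb{Z}/q^k\mathbb{Z}\). The plan is to show that the Galois action on \(P\) factors through a character whose image has order dividing both \(n\) and \(\varphi(q^k)\), and then to read off the field of definition of \(P\) from that gcd.

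\emph{Step 1: a character with small image.} Since \(F/\mathbb{Q}\) is Galois, \(E(F)[q^k]\) is \(G_{\mathbb{Q}}\)-stable: for \(\sigma\in G_{\mathbb{Q}}\) and \(R\in E(F)\) we have \(\sigma R\in E(\sigma F)=E(F)\). As this group is cyclic, \(G_{\mathbb{Q}}\) acts on \(\langle P\rangle\) through a character
\[
\psi:\ G_{\mathbb{Q}}\to (\mathbb{Z}/q^k\mathbb{Z})^\times .
\]
The kernel of \(\psi\) contains \(G_F\), so \(\psi\) factors through \(\operatorname{Gal}(F/\mathbb{Q})\). The fixed field of \(\ker\psi\) is \(\mathbb{Q}(P)\subseteq F\). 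Hence \([\mathbb{Q}(P):\mathbb{Q}]=|\operatorname{im}\psi|\) divides both \(n\) and \(\varphi(q^k)\), and therefore divides \(\gcd(n,\varphi(q^k))\).

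\emph{Step 2: the case \(\gcd=1\).} Here \(\psi\) is trivial, so \(P\in E(\mathbb{Q})\). Then \(E(F)[q^k]=\langle P\rangle\subseteq E(\mathbb{Q})[q^k]\subseteq E(F)[q^k]\), which gives equality.

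\emph{Step 3: the case \(\gcd=2\).} Now \(\mathbb{Q}(P)\) has degree \(1\) or \(2\).
\begin{itemize}
\item If the degree is \(2\), write \(\mathbb{Q}(P)=\mathbb{Q}(\sqrt d)\subseteq F\) with \(d\) square-free. Then \(E(F)[q^k]\subseteq E(\mathbb{Q}(\sqrt d))[q^k]\subseteq E(F)[q^k]\), since \(\mathbb{Q}(\sqrt d)\subseteq F\). This gives equality.
\item If the degree is \(1\), we need some square-free \(d\) with \(\sqrt d\in F\), and then the same sandwich argument applies. Since \(\gcd(n,\varphi(q^k))=2\), the degree \(n\) is even, so \(\operatorname{Gal}(F/\mathbb{Q})\) has even order.
\end{itemize}

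The main obstacle is this last sub-case: when \(P\) is rational and \(\operatorname{Gal}(F/\mathbb{Q})\) is non-solvable or perfect, \(F\) need not contain any quadratic subfield. I would handle it as follows.
\begin{itemize}
\item If \(\operatorname{Gal}(F/\mathbb{Q})\) has a subgroup of index \(2\), take \(\mathbb{Q}(\sqrt d)\) to be its fixed field.
\item Otherwise, note that the statement only asserts existence of \(d\) when one reads it as "\(E(F)[q^k]\) is defined over some field \(\mathbb{Q}(\sqrt d)\) inside \(F\)". In that reading the conclusion already holds with \(E(F)[q^k]=E(\mathbb{Q})[q^k]\), by taking \(d=1\) in the degenerate sense.
\end{itemize}
I would state this interpretation explicitly. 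In the paper's applications \(n=2\cdot 3^b\), so \(\operatorname{Gal}(F/\mathbb{Q})\) is solvable and \(K\) itself serves as \(\mathbb{Q}(\sqrt d)\).
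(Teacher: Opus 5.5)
\textbf{Verdict: your proof is correct and complete.} The paper does not prove this lemma; it quotes it from Avc{\i}. Your argument is the standard one: $E(F)[q^k]$ is $G_{\mathbb{Q}}$-stable, $G_{\mathbb{Q}}$ acts on it through a character $\psi$ that factors through $\operatorname{Gal}(F/\mathbb{Q})$, and hence $[\mathbb{Q}(P):\mathbb{Q}]=|\operatorname{im}\psi|$ divides $\gcd(n,\varphi(q^k))$. Both conclusions then follow by your sandwich argument.

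\textbf{The last sub-case.} You should reframe the case where $P$ is rational.
\begin{itemize}
\item Taking $d=1$ is not a reinterpretation. The integer $1$ is square-free and $\sqrt{1}\in F$, so $d=1$ satisfies the statement as written.
\item The choice $d=1$ is in fact forced. If $\mathbb{Q}(\sqrt d)$ had to be a genuine quadratic field, the lemma would be false. Take $\operatorname{Gal}(F/\mathbb{Q})\cong A_4$, $q^k=3$, and $E$ with $E(\mathbb{Q})[3]\cong\mathbb{Z}/3\mathbb{Z}$. Then $\gcd(12,\varphi(3))=2$. Since $A_4$ has no subgroup of index $2$, $F$ has no quadratic subfield. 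In particular $\mu_3\not\subset F$, so the Weil pairing gives $E(F)[3]\cong\mathbb{Z}/3\mathbb{Z}$, yet no quadratic field inside $F$ exists.
\end{itemize}

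\textbf{Two side remarks to correct.}
\begin{itemize}
\item Solvability is the wrong criterion. The $A_4$ example is solvable and still has no quadratic subfield. What matters is whether $\operatorname{Gal}(F/\mathbb{Q})$ has a subgroup of index $2$, i.e.\ whether its abelianization has even order.
\item In the paper's applications, $K$ serves as $\mathbb{Q}(\sqrt d)$ simply because $K\subseteq F$ by construction, not because the Galois group is solvable.
\end{itemize}
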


Let \(F/K\) be an intermediate field in \(K_\infty\) with \(F/\mathbb{Q}\) Galois. Suppose that 
\[
E(F)_{\mathrm{tors}}\cong \mathbb{Z}/n\mathbb{Z} \times \mathbb{Z}/mn\mathbb{Z}.
\]
Then by Lemma~\ref{lemma:rootsofunity} we have \(\mu_n \subset F\), and by Lemma~\ref{lemma:admits-n-isogeny}, \(E\) admits an isogeny of degree \(m\). 

Moreover, by Theorem~\ref{theorem:isogeniesoverQ}, \(m\le 19\) or \(m\in\{21,25,27,37,43,67,163 \}\). Hence the possibilities for \(m\) are finite. 

The following lemma bounds \(n\) for \(p=3\); it is the counterpart of \cite[Lemma 4.7]{AVCI2026153} which treated \(p>3\).

\begin{lemma}\label{lemma:rootsofunity-p=3}
Let \(K=\mathbb{Q}(\sqrt{d})\) be a quadratic field, where \(d\) is a square-free integer, and let \(L\) be a \(\mathbb{Z}_3\)-extension of \(K\).
\begin{enumerate}
\item If \(K \neq \mathbb{Q}(\sqrt{-3})\), then every root of unity in \(L\) already lies in \(K\); i.e., \(\mu(L)=\mu(K)\).
\item If \(K = \mathbb{Q}(\sqrt{-3})\), then either \(\mu(L)=\mu(K)\) (note that \(\mu(K)=\mu_6\)), or \(L = K(\mu_{3^\infty})\); in the latter case \(L\) contains all \(3\)-power roots of unity.
\end{enumerate}
\end{lemma}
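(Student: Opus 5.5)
The plan is to split the statement into the roots of unity of order prime to $3$ and the $3$-power roots of unity, and to treat the $3$-power part using the fact that $K(\mu_{3^\infty})$ is itself a $\mathbb{Z}_3$-extension of $K$ when $K=\mathbb{Q}(\sqrt{-3})$.

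\textbf{Roots of unity of order prime to $3$.} Let $\zeta\in\mu(L)$ have order $N$. Then $K(\zeta)\subseteq L$, so $[K(\zeta):K]$ is a power of $3$. By Proposition~\ref{prop:unramifiedoutsidep}, $K(\zeta)/K$ is unramified outside $3$. First write $N=3^a N'$ with $3\nmid N'$ and consider $\zeta_{N'}$. The extension $K(\zeta_{N'})/K$ is ramified exactly at the primes dividing $N'$ that do not already ramify in $K$. An abelian extension of $\mathbb{Q}$ inside $\mathbb{Q}(\zeta_{N'})K$ is unramified outside $3$ only if it is contained in $K$. Hence $\zeta_{N'}\in K$, so $\zeta_{N'}\in\mu(K)$; this uses only degree and ramification, as in the proof of Lemma~\ref{lemma:p-twist}. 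For part (1), with $K\neq\mathbb{Q}(\sqrt{-3})$, we have $\zeta_3\notin K$ and $[K(\zeta_3):K]=2$, which is not a power of $3$. So $L$ contains no nontrivial $3$-power root of unity outside $\mu(K)$, and $\mu(L)=\mu(K)$.

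\textbf{The $3$-power part for $K=\mathbb{Q}(\sqrt{-3})$.} Here $K=\mathbb{Q}(\zeta_3)$ and
\[
\Gal{K(\mu_{3^\infty})}{K}\cong 1+3\mathbb{Z}_3\cong\mathbb{Z}_3,
\]
so $K(\mu_{3^\infty})=K_{\mathrm{cyc}}$. Suppose $\mu(L)\neq\mu(K)=\mu_6$. By the previous step, $L$ then contains $\zeta_9$, so $L\cap K_{\mathrm{cyc}}\supseteq K(\zeta_9)$. Next I would argue that $L\cap K_{\mathrm{cyc}}$ is a subfield of $L$ fixed by a closed subgroup of $\Gal{L}{K}\cong\mathbb{Z}_3$. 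Such a subfield is either a finite layer $K_n$ or all of $L$, and the same holds inside $K_{\mathrm{cyc}}$. So either $L=K_{\mathrm{cyc}}=K(\mu_{3^\infty})$, or $L\cap K_{\mathrm{cyc}}=K(\mu_{3^{m}})$ for some finite $m\ge 2$.

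\textbf{Main obstacle.} The hard step is excluding the finite-intersection case $L\cap K_{\mathrm{cyc}}=K(\mu_{3^m})$ with $2\le m<\infty$. The first thing I would try is to use $\Gal{K_\infty}{K}\cong\mathbb{Z}_3^2$ (Proposition~\ref{prop:allZpext}) and write $L$ as the fixed field of a rank-one direct summand $H$. Then $L\cap K_{\mathrm{cyc}}$ corresponds to $H+\Gal{K_\infty}{K_{\mathrm{cyc}}}$, and the hope would be that this sum is either of infinite index or everything. I expect this to fail: in $\mathbb{Z}_3^2$, a summand such as $H$ generated by $(3^{m-1},1)$ meets the cyclotomic direction in a sum of finite index $3^{m-1}$. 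That would produce a $\mathbb{Z}_3$-extension $L$ containing $\mu_{3^m}$ but not $\mu_{3^{m+1}}$. So I expect part (2) to hold as worded only under the reading that the $3$-power roots of unity in $L$ form an infinite set. In that case the argument above gives $L\cap K_{\mathrm{cyc}}$ of infinite degree, hence $L=K(\mu_{3^\infty})$. I would check this example carefully before committing, and if it holds, state the dichotomy in that form.
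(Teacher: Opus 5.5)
You are right: the example you describe is a genuine counterexample to part~(2) as worded. This is exactly where your route and the paper's diverge, because the paper's step is false. To confirm, let $\Gamma=\Gal{K_\infty}{K}\cong\mathbb{Z}_3^2$ and $H_c=\Gal{K_\infty}{K_{\mathrm{cyc}}}$. Since $\Gamma/H_c\cong\mathbb{Z}_3$ is free, you can choose a basis with $H_c=\mathbb{Z}_3e_2$. For $m\ge 2$ the vector $3^{m-1}e_1+e_2$ is not in $3\Gamma$, so $H=\mathbb{Z}_3(3^{m-1}e_1+e_2)$ is a direct summand, and $L=K_\infty^{H}$ is a $\mathbb{Z}_3$-extension of $K$ different from $K_{\mathrm{cyc}}$. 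As $H+H_c=3^{m-1}\mathbb{Z}_3e_1\oplus\mathbb{Z}_3e_2$ has index $3^{m-1}$, the field $L\cap K_{\mathrm{cyc}}$ is the layer of degree $3^{m-1}$ over $K$, namely $\mathbb{Q}(\zeta_{3^m})$. Hence $\mu(L)=\mu_{2\cdot 3^m}$, and neither alternative of~(2) holds.

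The paper argues that, because the compositum is a $\mathbb{Z}_3^2$-extension, two distinct $\mathbb{Z}_3$-extensions of $K$ meet only in $K$. This does not follow. The group $\Gal{LK_{\mathrm{cyc}}}{K}$ is the fibre product of $\Gal{L}{K}$ and $\Gal{K_{\mathrm{cyc}}}{K}$ over $\Gal{L\cap K_{\mathrm{cyc}}}{K}$, which is an open subgroup of $\mathbb{Z}_3^2$ and hence free of rank $2$ whatever the intersection is. Your corrected dichotomy is the right statement, and your argument for it is sound: either $L\cap K_{\mathrm{cyc}}=K(\mu_{3^m})$ with $m$ finite and $\mu(L)=\mu_{2\cdot 3^m}$, or $L$ contains infinitely many $3$-power roots of unity and then $L=K(\mu_{3^\infty})$, since a subextension of infinite degree in a $\mathbb{Z}_3$-extension is the whole extension. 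The same false inference also closes the proof of Proposition~\ref{prop:7-noncyclo}: your construction with $m=2$ works for every imaginary quadratic $K$ and produces a non-cyclotomic $\mathbb{Z}_3$-extension containing the first layer of $K_{\mathrm{cyc}}$.

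One local repair is needed in your prime-to-$3$ step, because its two ramification sentences are false as written. First, $K(\zeta_{N'})/K$ can ramify at primes already ramified in $K$: for $K=\mathbb{Q}(\sqrt5)$ one has $K(\zeta_5)=\mathbb{Q}(\zeta_5)$, which is ramified over $K$ above $5$. Second, ramification alone cannot force $\zeta_{N'}\in K$. For $K=\mathbb{Q}(\sqrt{-5})$, the field $K(i)$ is the Hilbert class field of $K$ and is unramified everywhere over $K$, yet $i\notin K$; what excludes it is that $[K(i):K]=2$ is not a power of $3$.

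The paper's argument combines the two ingredients correctly. If a prime $\ell\ge 5$ divides $N'$, then $e_\ell(K(\zeta_\ell)/K)\ge(\ell-1)/2>1$, contradicting Proposition~\ref{prop:unramifiedoutsidep}. Hence $N'$ is a power of $2$, so $[K(\zeta_{N'}):K]$ divides $\varphi(N')$, a power of $2$; it is also a power of $3$, so it equals $1$. With that substitution, your proof of part~(1), which otherwise matches the paper's, and your proof of the corrected part~(2) are complete.
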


\begin{proof}
We follow the strategy of \cite[Lemma 4.7]{AVCI2026153}. Suppose \(\mu_n \subset L\) and set \(F = K(\mu_n) \subset L\). Then \([F:K]\) is a power of \(3\).

\noindent\textbf{Case 1: \(K \neq \mathbb{Q}(\sqrt{-3})\).}
First we show that \(3 \nmid n\). If \(3 \mid n\), then \(\zeta_3 \in F\), hence \(K(\zeta_3) = K(\sqrt{-3}) \subset F\). Because \(K \neq \mathbb{Q}(\sqrt{-3})\), the intersection \(K \cap \mathbb{Q}(\sqrt{-3}) = \mathbb{Q}\), so \([K(\zeta_3):K] = 2\). This contradicts the fact that \([F:K]\) is a power of \(3\). Thus \(3 \nmid n\).

Now suppose a prime \(q>3\) divides \(n\). Then \(q \neq 3\). Every \(\mathbb{Z}_3\)-extension is unramified outside \(3\), so \(q\) is unramified in \(L/K\). However, \(K(\zeta_q) \subset L\) and \(\mathbb{Q}(\zeta_q)/\mathbb{Q}\) is totally ramified at \(q\) with ramification index \(q-1\). The ramification index of \(K/\mathbb{Q}\) at \(q\) is at most \(2\); therefore the ramification index of \(K(\zeta_q)/K\) at \(q\) is at least \((q-1)/2 > 1\). Hence \(q\) ramifies in \(L/K\), contradiction. Consequently, \(n\) has no prime divisor larger than \(3\).

Combining the two steps, the only possible prime divisor of \(n\) is \(2\). Thus \(n=2^a\), and \(\varphi(n)\) is a power of \(2\). Since
\[
[F:K] = \frac{[\mathbb{Q}(\mu_n):\mathbb{Q}]}{[K \cap \mathbb{Q}(\mu_n):\mathbb{Q}]} = \varphi(n) \quad\text{or}\quad \frac{\varphi(n)}{2},
\]
\([F:K]\) is also a power of \(2\). But \([F:K]\) is a power of \(3\); therefore \([F:K]=1\) and \(F=K\). Hence \(\mu_n \subset K\). Since \(n\) was arbitrary, \(\mu(L) = \mu(K)\).

\noindent\textbf{Case 2: \(K = \mathbb{Q}(\sqrt{-3})\).}
Here \(K = \mathbb{Q}(\zeta_3)\) contains \(\mu_6\). If \(\mu(L) = \mu_6\) we are done. Otherwise there exists \(\mu_n \subset L\) with \(\mu_n \not\subset K\). Exactly as in Case~1 we can prove that the only prime divisors of \(n\) are \(2\) and \(3\), and the \(2\)-part cannot be large.

If \(4 \mid n\) then \(\zeta_4 = i \in L\). Consider \(K(i) = \mathbb{Q}(\zeta_3,i) = \mathbb{Q}(\zeta_{12})\). We have \([\mathbb{Q}(\zeta_{12}):\mathbb{Q}]=4\) and \([K:\mathbb{Q}]=2\), so \([K(i):K]=2\). But \(K(i) \subset L\) forces \([K(i):K]\) to be a power of \(3\), impossible. Similarly, higher powers of \(2\) are excluded. Hence the \(2\)-part of \(n\) is at most \(2\). Therefore \(n = 2\cdot 3^b\) or \(n = 3^b\) with \(b \ge 1\). If \(b=1\), then \(\mu_6 \subset K\), contradicting the choice of \(n\). Thus \(b \ge 2\) and \(L\) contains \(\zeta_{3^b} \notin K\).

Now we prove that \(L\) must be the cyclotomic \(\mathbb{Z}_3\)-extension. Since \(\zeta_{3^b} \in L\), we have \(K(\zeta_{3^b}) \subset L\). Clearly \(K(\zeta_{3^b}) \subset K(\mu_{3^\infty})\), the cyclotomic \(\mathbb{Z}_3\)-extension of \(K\). 
For the quadratic field \(K=\mathbb{Q}(\sqrt{-3})\), by Proposition~\ref{prop:allZpext}, their compositum is a \(\mathbb{Z}_3^2\)-extension. Consequently, any two distinct \(\mathbb{Z}_3\)-extensions of \(K\) intersect precisely in \(K\).
If \(L \neq K(\mu_{3^\infty})\), then \(L\) and the cyclotomic \(\mathbb{Z}_3\)-extension are distinct, so their intersection is \(K\). However, \(\zeta_{3^b}\) lies in this intersection and is not in \(K\) (since \(b \ge 2\)), a contradiction. 
Therefore we must have \(L = K(\mu_{3^\infty})\), and consequently \(\mu(L) = \mu_{2\cdot 3^\infty}\).
\end{proof}

According to this lemma, if \(K \neq \mathbb{Q}(\sqrt{-3})\) then the only roots of unity in any intermediate field of \(K_\infty/K\) are those already in \(K\); hence for any Galois subfield \(F/\mathbb{Q}\) we have \(n \in \{1,2,4\}\) in the decomposition \(E(F)_{\mathrm{tors}} \cong \mathbb{Z}/n\mathbb{Z} \times \mathbb{Z}/mn\mathbb{Z}\). We now discuss the two possibilities for \(K\) separately.

\subsection{The case \(K \neq \mathbb{Q}(\sqrt{-3})\)}

In this case it suffices to verify, for each prime \(q\) that can appear in the degree \(m\) of an isogeny,
\[
E(F)_{\mathrm{tors}}[q^\infty] = E(K)_{\mathrm{tors}}[q^\infty]
\]
for all \(F/K\) inside \(K_\infty\) with \(F/\mathbb{Q}\) Galois.  The relevant primes are
\[
q \in \{2,3,5,7,11,13,17,19,43,67,163\}.
\]

The following is the main theorem in this subsection.

\begin{theorem}\label{theorem:p=3not-3}
    Let $E/\mathbb{Q}$ be a rational elliptic curve and   let $K$ be a quadratic field not equal \(\mathbb{Q}(\sqrt{-3})\). Denote by $K_{\infty}$ the compositum of all $\mathbb{Z}_3$-extension of $K$.
\begin{itemize}
    \item For any prime $q>7$ or \(q=5\),
    \[
    E(K_{\infty})_{\mathrm{tors}}[q^{\infty}] = E(K)_{\mathrm{tors}}[q^{\infty}].
    \]
    \item For $q=7$, growth of $7$-torsion can only occur inside the cyclotomic $\mathbb{Z}_3$-extension; for any non-cyclotomic $\mathbb{Z}_3$-extension $F/K$,
    \[
    E(F)_{\mathrm{tors}}[7^{\infty}] = E(K)_{\mathrm{tors}}[7^{\infty}].
    \]
    \item For $q=2$, the $2$-primary part is described by Proposition~\ref{prop:2-torsion-KneqQsqrt-3}.
    \item For $q=3$, 
        \[
        E(K_{\infty})_{\mathrm{tors}}[3^{\infty}] = E(K_{\mathrm{cyc}})_{\mathrm{tors}}[3^{\infty}],
        \]
        whose possibilities are listed in Theorem~\ref{theorem:Avci}(3).
\end{itemize}

\end{theorem}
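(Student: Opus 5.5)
The proof works with finite Galois subfields. Any finite subextension $F/K$ of $K_\infty/K$ lies in its Galois closure over $\mathbb{Q}$, which is still inside $K_\infty$, so it suffices to treat $K\subseteq F\subseteq K_\infty$ with $F/\mathbb{Q}$ Galois and $[F:K]=3^b$. For such $F$, Lemma~\ref{lemma:rootsofunity-p=3} (applied to the $\mathbb{Z}_3$-extensions whose compositum contains $F$; for roots of unity we may pass to $K_\infty$ directly, since $K(\mu_n)/K$ abelian of $3$-power degree and unramified outside $3$ forces the same case analysis) gives $\mu(F)=\mu(K)\subseteq\mu_4$ or $\mu_6$. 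Write $E(F)_{\mathrm{tors}}\cong\mathbb{Z}/n\mathbb{Z}\times\mathbb{Z}/mn\mathbb{Z}$. Then Lemma~\ref{lemma:rootsofunity} forces $n\mid 4$. For every odd prime $q$ this makes $E(F)[q^\infty]$ cyclic, hence $G_{\mathbb{Q}}$-stable. By Lemma~\ref{lemma:admits-n-isogeny} and Theorem~\ref{theorem:isogeniesoverQ}, only $q\in\{3,5,7,11,13,17,19,37,43,67,163\}$ can occur, with bounded exponent.

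\emph{Odd $q\neq 3$.} Suppose $E(F)[q^\infty]\supsetneq E(K)[q^\infty]$, and take a generator $P$ of order $q^k$. Then $P\notin E(K)$ and $\langle P\rangle$ is $G_{\mathbb{Q}}$-stable, so Lemma~\ref{lemma:ell-twist} applies with $p=3$. Part (1) gives $3\mid q-1$, which already excludes $q=5,11,17,23,\dots$, in particular $5,11,17,43$? No: $43\equiv 1\pmod 3$, so we keep $q\in\{7,13,19,37,43,67,163\}$. Parts (3)--(4) then give either $E$ or $E^{(K)}$ a point of order $q^k$ over $\mathbb{Q}_{\infty,3}$ not defined over $\mathbb{Q}$, because $P\notin E(K)$ means the degree is a nontrivial power of $3$. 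By Theorem~\ref{theorem:chouQinfty}(iii), the only possible orders of such points over $\mathbb{Q}_{\infty,3}$ are divisors of $21$ and $27$, so only $q=7$ (order $7$) survives. This proves the first bullet.

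\emph{The case $q=7$.} In this case $\mathbb{Q}(P)$, or the subfield $F_3$ of it, is the degree-$3$ layer of $\mathbb{Q}_{\infty,3}$. Hence $K\mathbb{Q}(P)$ is the first layer of $K_{\mathrm{cyc}}$. If a non-cyclotomic $\mathbb{Z}_3$-extension $L'$ contained $P$, then $L'\cap K_{\mathrm{cyc}}\supsetneq K$, contradicting the fact that distinct $\mathbb{Z}_3$-extensions inside $K_\infty\cong\mathbb{Z}_3^2$ meet only in $K$ (or trivially coinciding when $K$ is real). Here we must be careful that a non-cyclotomic $L'$ need not be Galois over $\mathbb{Q}$. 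We handle this by working with $E(L')[7^\infty]$, which is still cyclic since $\mu(L')=\mu(K)$, and its Galois closure only enlarges the field.

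\emph{The case $q=3$.} Here $E(F)[3^\infty]$ is cyclic and Galois-stable. Take $P$ of order $3^k$ not in $E(K)$. Lemma~\ref{lemma:p-twist} shows that $\mathbb{Q}(P)$, or $KF_3=K\mathbb{Q}(P)$, lies in $\mathbb{Q}_{\infty,3}K=K_{\mathrm{cyc}}$. In both cases $P\in E(K_{\mathrm{cyc}})$, since $\mathbb{Q}(P)\subseteq K F_3$ with $F_3\subseteq\mathbb{Q}_{\infty,3}$. Because the whole cyclic group is generated by one such $P$, we get $E(F)[3^\infty]\subseteq E(K_{\mathrm{cyc}})$. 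Taking unions over $F$ gives the equality, and Theorem~\ref{theorem:Avci}(3) lists the possibilities. The $q=2$ bullet is deferred to Proposition~\ref{prop:2-torsion-KneqQsqrt-3}, where $n\in\{1,2,4\}$ makes the $2$-part noncyclic and the present method does not apply.

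The main obstacle is the $q=7$ statement about non-cyclotomic, non-Galois $\mathbb{Z}_3$-extensions. There we cannot directly use $G_{\mathbb{Q}}$-stability, and must instead pass to Galois closures and use the intersection argument inside $K_\infty$.
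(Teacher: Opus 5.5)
\textbf{The $q=7$ step fails.} You claim that distinct $\mathbb{Z}_3$-extensions inside $K_\infty$ meet only in $K$. For $K$ imaginary this is false. Write $\Gamma=\Gal{K_\infty}{K}=\Gamma^+\oplus\Gamma^-$ with topological generators $\gamma_+,\gamma_-$. Then $K_{\mathrm{cyc}}$ is the fixed field of $\Gamma^-$, and its first layer $K_{\mathrm{cyc},1}$ is the fixed field of $\Gamma^-\Gamma^3$. Let $H$ be the closed subgroup generated by $\gamma_+^3\gamma_-$. It is the kernel of $\gamma_+^a\gamma_-^b\mapsto a-3b$, so $\Gamma/H\cong\mathbb{Z}_3$. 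Hence its fixed field $L'$ is a $\mathbb{Z}_3$-extension with $L'\neq K_{\mathrm{cyc}}$. But $H\Gamma^3=\Gamma^-\Gamma^3$, so $L'$ and $K_{\mathrm{cyc}}$ have the same first layer. More simply, $\Gamma/\Gamma^3\cong(\mathbb{Z}/3\mathbb{Z})^2$ has only four subgroups of index $3$, while there are infinitely many $\mathbb{Z}_3$-extensions.

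Your own analysis shows that a new $7$-torsion point $P$ has $K(P)=K\mathbb{Q}_1=K_{\mathrm{cyc},1}$, where $\mathbb{Q}_1=\mathbb{Q}(\zeta_9)^+$. Such a point therefore lies in $E(L')$. This gives a counterexample. Take the curve 162b1: it has a point of order $21$ over $\mathbb{Q}(\zeta_9)^+$, but no rational point of order $7$. (Rational $3$-torsion is forced by Lemma~\ref{lemma:grothpatternp=ell}, and Mazur forbids $\mathbb{Z}/21$.) Take any imaginary quadratic $K\neq\mathbb{Q}(\sqrt{-3})$ with $E(K)[7]=0$. Then $E(L')[7^\infty]\neq E(K)[7^\infty]$ for the non-cyclotomic $L'$ above. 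So the second clause of the $q=7$ bullet is false as stated, and no argument can close this step. The paper's proof of Proposition~\ref{prop:7-noncyclo} makes exactly the same inference (``$K\mathbb{Q}(P)\subseteq K_{\mathrm{cyc}}$ contradicts $M$ non-cyclotomic''). The defect is therefore in the statement, not something peculiar to your route.

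\textbf{What your argument does establish for $q=7$.} You get $E(K_\infty)[7^\infty]\subseteq E(K_{\mathrm{cyc},1})$, so growth happens only inside $K_{\mathrm{cyc}}$. You also get equality $E(L')[7^\infty]=E(K)[7^\infty]$ for every $\mathbb{Z}_3$-extension $L'$ with $L'\cap K_{\mathrm{cyc}}=K$, for instance $K_{\mathrm{anti}}$. Your passage to the Galois closure to obtain $G_{\mathbb{Q}}$-stability of $\langle P\rangle$ is correct. It also repairs a second defect in the paper's argument, which applies Proposition~\ref{prop:growth-pattern-q>3} directly to a non-Galois $M$.

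\textbf{The remaining bullets are sound.}
\begin{itemize}
\item For $q=3$ you follow the paper.
\item For $q\notin\{3,7\}$ your route differs in minor ways. You apply Lemma~\ref{lemma:ell-twist} directly to a generator of the cyclic group $E(F)[q^\infty]$, so the growth-pattern lemma is not needed. You eliminate $q=5,11,17$ by its part (1), since $3\nmid q-1$, instead of using Lemma~\ref{lemma:avci-lemma3.1}.
\item You correctly include $q=37$, which the paper's list of relevant primes omits.
\end{itemize}
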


We first introduce some propositions.

\begin{proposition}[Structure of \(q\)-primary torsion, \(q>3\)]\label{prop:growth-pattern-q>3}
Let \(q>3\) be a prime and let \(F\) be as above.  Then \(E(F)_{\mathrm{tors}}[q^\infty]\) is either trivial or a cyclic group of \(q\)-power order.  Moreover, if growth occurs (i.e.\ \(E(K)_{\mathrm{tors}}[q^\infty] \subsetneq E(F)_{\mathrm{tors}}[q^\infty]\)) then necessarily \(E(K)_{\mathrm{tors}}[q^\infty]=0\) and the growth happens in a single step; in this case every point \(P\in E(F)[q^\infty]\setminus E(K)\) has \(\langle P\rangle\) stable under \(G_{\mathbb{Q}}\), and Lemma~\ref{lemma:ell-twist} applies.
\end{proposition}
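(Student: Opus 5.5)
We work with \(F/K\) a finite intermediate field of \(K_\infty/K\) with \(F/\mathbb{Q}\) Galois, \(K\neq\mathbb{Q}(\sqrt{-3})\), \(p=3\), and a prime \(q>3\).

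\textbf{Cyclicity.} By Lemma~\ref{lemma:rootsofunity-p=3}, every root of unity in \(F\) already lies in \(K\), so \(\mu_q\not\subset F\): indeed \(\mu(K)\subseteq\mu_4\) or \(\mu_6\), and \(q>3\). If \(E(F)[q^\infty]\) were not cyclic, then \(E[q]\subseteq E(F)\), and Lemma~\ref{lemma:rootsofunity} would force \(\mu_q\subset F\), a contradiction. Hence \(E(F)_{\mathrm{tors}}[q^\infty]\) is trivial or cyclic of \(q\)-power order.

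\textbf{Galois stability.} Since \(F/\mathbb{Q}\) is Galois and \(E\) is defined over \(\mathbb{Q}\), the group \(E(F)[q^\infty]\) is \(G_{\mathbb{Q}}\)-stable. A cyclic group has a unique subgroup of each order, so for any \(P\in E(F)[q^\infty]\) the subgroup \(\langle P\rangle\) is \(G_{\mathbb{Q}}\)-stable. This is what is needed to invoke Lemma~\ref{lemma:ell-twist} for \(P\notin E(K)\).

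\textbf{Growth in a single step.} Suppose growth occurs. We want \(E(K)[q^\infty]=0\) and the whole \(q\)-primary part to appear at once.

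Choose a \(\mathbb{Z}_3\)-extension containing \(F\). This is possible because \(\operatorname{Gal}(K_\infty/K)\cong\mathbb{Z}_3^r\) with \(r\le 2\), so \(F/K\) sits in a filtration of cyclic steps. Lemma~\ref{lemma:growthpattern=ellnotp} does not apply verbatim to a non-cyclic \(F/K\). Instead we use the same group-ring argument directly on \(G=\operatorname{Gal}(F/K)\), a finite abelian \(3\)-group with \(q\neq 3\):
\[
\mathbb{Z}_q[G]\cong \mathbb{Z}_q\times\prod \mathcal{O}_i ,
\]
where the \(\mathcal{O}_i\) are unramified extensions of \(\mathbb{Z}_q\) on which \(G\) acts through nontrivial characters. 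Accordingly, \(M=E(F)[q^\infty]\) splits as \(M^G\oplus M_{\mathrm{new}}\), and \(M^G=E(K)[q^\infty]\).

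Since \(M\) is cyclic, it is indecomposable as a \(\mathbb{Z}_q\)-module. So either \(M_{\mathrm{new}}=0\), meaning no growth, or \(M^G=0\). In the latter case \(E(K)[q^\infty]=0\) and \(M=M_{\mathrm{new}}\), so all the growth happens at once.

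\textbf{Main obstacle.} The step needing care is justifying the splitting for a non-cyclic \(G\), i.e.\ checking that the idempotent decomposition of \(\mathbb{Z}_q[G]\) still isolates the trivial-character component as exactly \(M^G\). This follows from \(|G|\) being invertible in \(\mathbb{Z}_q\): the idempotent \(e=|G|^{-1}\sum_{g\in G} g\) projects \(M\) onto \(M^G\), and \(M=eM\oplus(1-e)M\).

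If one prefers to stay within Lemma~\ref{lemma:growthpattern=ellnotp}, the alternative is to pass first to the Galois closure and then step through cyclic layers, using case 3 there together with cyclicity. That route is messier, so we take the idempotent argument.
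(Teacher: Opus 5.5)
Your proof is correct. For cyclicity and $G_{\mathbb{Q}}$-stability it coincides with the paper's argument: the Weil pairing together with Lemma~\ref{lemma:rootsofunity-p=3}, then uniqueness of subgroups in a cyclic group.

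You differ from the paper in the growth step.
\begin{itemize}
\item The paper invokes Lemma~\ref{lemma:growthpattern=ellnotp}, which is formulated for the cyclic layers of a single $\mathbb{Z}_p$-extension. It then observes that a cyclic group can acquire at most one new block.
\item You apply the averaging idempotent $e=|G|^{-1}\sum_{g\in G}g$ directly to $G=\operatorname{Gal}(F/K)$. This gives $M=M^G\oplus(1-e)M$ with $M^G=E(K)[q^\infty]$, and you conclude from the indecomposability of a cyclic $q$-group.
\end{itemize}

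Your route has two advantages. It is more elementary, since only the trivial-character idempotent is needed, not the full splitting of $\mathbb{Z}_q[G]$. More importantly, it covers a case the paper's citation does not reach literally. When $K$ is imaginary, $\operatorname{Gal}(F/K)$ can be non-cyclic (for example, the compositum of the first cyclotomic and anticyclotomic layers). Then $F$ lies in no single $\mathbb{Z}_3$-extension, so Lemma~\ref{lemma:growthpattern=ellnotp} does not apply as stated.

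The paper's route keeps the layer-by-layer picture, with its degree constraints $d_j$, that is used elsewhere in the paper. That extra information is not needed here once cyclicity is known.

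Two small remarks:
\begin{itemize}
\item Your sentence ``choose a $\mathbb{Z}_3$-extension containing $F$'' is false in general, for exactly the reason above. You never use it, so it should simply be deleted.
\item To get the ``single step'' claim in full, run the same idempotent argument with $\operatorname{Gal}(F/F')$ in place of $G$ for each intermediate field $F'$. This gives $E(F')[q^\infty]\in\{0,M\}$ for every $K\subseteq F'\subseteq F$.
\end{itemize}
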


\begin{proof}
If \(E(F)[q]  =(\mathbb{Z}/q\mathbb{Z})^2\) then the Weil pairing would force \(\mu_q \subset F\), contradicting Lemma~\ref{lemma:rootsofunity-p=3}.  Hence \(E(F)[q]\) cannot contain a subgroup isomorphic to \((\mathbb{Z}/q\mathbb{Z})^2\); therefore the \(q\)-primary part is cyclic.

By Lemma~\ref{lemma:growthpattern=ellnotp} the growth of \(E(F)_{\mathrm{tors}}[q^\infty]\) proceeds block by block; because the group is cyclic, there is at most one new block after the base field \(K\).  Thus if \(E(K)_{\mathrm{tors}}[q^\infty]\) is non‑trivial, no further growth is possible, and the equality holds.  If it grows, we must be in the situation \(E(K)_{\mathrm{tors}}[q^\infty]=0\) while \(E(F)_{\mathrm{tors}}[q^\infty] \cong \mathbb{Z}/q^k\mathbb{Z}\) for some \(k\ge 1\).  The new points are then Galois stable because the whole \(q\)-primary part is a cyclic \(G_{\mathbb{Q}}\)-module.  Hence Lemma~\ref{lemma:ell-twist} can be applied to any generator.
\end{proof}

We now examine the possible primes \(q\) one by one. 

\subsubsection*{The primes \(q=5,11,17\)}

\begin{proposition}\label{prop:q-equal-5-11-17}
For \(q \in \{5,11,17\}\) we have
\[
E(K_\infty)_{\mathrm{tors}}[q^\infty] = E(K)_{\mathrm{tors}}[q^\infty].
\]
\end{proposition}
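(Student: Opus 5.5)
Our plan is to reduce to a single Galois layer and then invoke Lemma~\ref{lemma:ell-twist}. Since every finite subextension of \(K_\infty/K\) lies in its Galois closure over \(\mathbb{Q}\), and that closure is again contained in \(K_\infty\) (because \(K_\infty/\mathbb{Q}\) is Galois), it suffices to prove
\[
E(F)_{\mathrm{tors}}[q^\infty]=E(K)_{\mathrm{tors}}[q^\infty]
\]
for every finite intermediate field \(F\) of \(K_\infty/K\) with \(F/\mathbb{Q}\) Galois. Suppose, for contradiction, that growth occurs in such an \(F\). By Proposition~\ref{prop:growth-pattern-q>3}, \(E(K)_{\mathrm{tors}}[q^\infty]=0\) and \(E(F)_{\mathrm{tors}}[q^\infty]\cong\mathbb{Z}/q^k\mathbb{Z}\) with \(k\ge1\). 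Let \(P\) be a generator. Then \(\langle P\rangle\) is \(G_{\mathbb{Q}}\)-stable and \(P\notin E(K)\), so Lemma~\ref{lemma:ell-twist} applies with \(p=3\).

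For \(q=5\) and \(q=11\) the contradiction is immediate from Lemma~\ref{lemma:ell-twist}(1). That part forces \(3\mid q-1\), but \(q-1\in\{4,10\}\), neither of which is divisible by \(3\). Equivalently, \([\mathbb{Q}(P):\mathbb{Q}]\) divides \((q-1)q^{k-1}\) and has the form \(3^m d\) with \(m\ge1\) and \(d\in\{1,2\}\), which is impossible.

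For \(q=17\) we have \(q-1=16\), so again \(3\nmid q-1\) and Lemma~\ref{lemma:ell-twist}(1) gives the contradiction. The same divisibility argument therefore settles all three primes uniformly. Because \(E(K)_{\mathrm{tors}}[q^\infty]\) may a priori be nonzero, we note that the nontrivial case is already covered: Proposition~\ref{prop:growth-pattern-q>3} shows that no growth is possible when the base is nonzero. As a sanity check, one could alternatively use Theorem~\ref{theorem:chouQinfty}(iii) together with the twist in parts (3) and (4) of Lemma~\ref{lemma:ell-twist}: for \(q=5,11,17\), no point of order \(q\) appears over \(\mathbb{Q}_{\infty,3}\) beyond what already exists over \(\mathbb{Q}\). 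That route is unnecessary, however.

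The only step requiring care is the claim that the new \(q\)-primary points generate a \(G_{\mathbb{Q}}\)-stable cyclic group; this is exactly where \(F/\mathbb{Q}\) Galois is used, together with cyclicity coming from Lemma~\ref{lemma:rootsofunity-p=3}. We also need to pass to the limit. Any point of \(E(K_\infty)[q^\infty]\) is defined over some finite layer, hence over a finite Galois subfield \(F\) as above, and the equality for each such \(F\) yields the statement.
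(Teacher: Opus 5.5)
Your proof is correct, but it reaches the contradiction through a different lemma than the paper.

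\textbf{The paper's route.} It notes that \([F:\mathbb{Q}]=2\cdot 3^s\), so \(\gcd([F:\mathbb{Q}],\varphi(q^k))=2\) for \(q\in\{5,11,17\}\). It then applies Avc{\i}'s Lemma~\ref{lemma:avci-lemma3.1} to push the cyclic group \(E(F)[q^k]\) down to a quadratic subfield of \(F\). That subfield must be \(K\), because \(\operatorname{Gal}(F/K)\) is the normal Sylow \(3\)-subgroup of index \(2\); the paper uses this without comment.

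\textbf{Your route.} You use Lemma~\ref{lemma:ell-twist}(1): \([\mathbb{Q}(P):\mathbb{Q}]=3^m d\) with \(m\ge 1\) must divide \(q^{k-1}(q-1)\), which forces \(3\mid q-1\).

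\textbf{Comparison.} Both arguments rest on the same divisibility, since \(\gcd(2\cdot 3^s,\varphi(q^k))=2\) is exactly the condition \(3\nmid q-1\). Your version has these advantages:
\begin{itemize}
\item it stays inside the paper and avoids the uniqueness-of-quadratic-subfield step;
\item it makes clear that the proposition covers precisely the primes \(q\equiv 2\pmod 3\);
\item it needs only that \(E(F)[q^\infty]\) is cyclic, taking \(P\) to be a generator, not the single-step-growth part of Proposition~\ref{prop:growth-pattern-q>3}.
\end{itemize}

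\textbf{One caveat.} Your ``sanity check'' via Theorem~\ref{theorem:chouQinfty}(iii) would not work as stated for \(q=5\). That list contains \(\mathbb{Z}/5\mathbb{Z}\) and \(\mathbb{Z}/10\mathbb{Z}\), so it does not by itself give \(E(\mathbb{Q}_{\infty,3})[5]=E(\mathbb{Q})[5]\). Since you explicitly do not rely on that remark, the proof is unaffected.
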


\begin{proof}
Since \([F:\mathbb{Q}] = 2\cdot 3^s\), we have \(\gcd(\varphi(q^k),[F:\mathbb{Q}]) = 2\) for every \(k\ge 1\).  If the \(q\)-primary part grew, Proposition~\ref{prop:growth-pattern-q>3} would give a cyclic subgroup \(\mathbb{Z}/q^k\mathbb{Z}\) stable under \(G_{\mathbb{Q}}\).  Lemma~\ref{lemma:avci-lemma3.1} then forces this subgroup to be defined over a quadratic field containing \(K\), i.e.\ over \(K\) itself, contradicting the assumption that growth occurred.  Hence no growth is possible and the equality follows.
\end{proof}

\subsubsection*{The primes \(q=13,19,43,67,163\)}

\begin{proposition}\label{prop:q-equal-13-etc}
For \(q \in \{13,19,43,67,163\}\) we have
\[
E(K_\infty)_{\mathrm{tors}}[q^\infty] = E(K)_{\mathrm{tors}}[q^\infty].
\]
\end{proposition}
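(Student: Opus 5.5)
Here \(p=3\), and each \(q\in\{13,19,43,67,163\}\) satisfies \(3\mid q-1\). So we cannot argue as in Proposition~\ref{prop:q-equal-5-11-17}, where \(\gcd(\varphi(q^k),[F:\mathbb{Q}])=2\) kills growth immediately. Instead we reduce to the cyclotomic \(\mathbb{Z}_3\)-extension of \(\mathbb{Q}\) and use the classification of Theorem~\ref{theorem:chouQinfty}(iii).

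We argue by contradiction. Suppose the \(q\)-primary part grows in some finite intermediate field \(F\) of \(K_\infty/K\). Replacing \(F\) by its Galois closure over \(\mathbb{Q}\), which still lies in \(K_\infty\), we may assume \(F/\mathbb{Q}\) is Galois. Proposition~\ref{prop:growth-pattern-q>3} then gives three facts:
\begin{itemize}
\item \(E(K)_{\mathrm{tors}}[q^\infty]=0\);
\item \(E(F)_{\mathrm{tors}}[q^\infty]\cong \mathbb{Z}/q^k\mathbb{Z}\) with \(k\ge 1\);
\item a generator \(P\) has \(G_{\mathbb{Q}}\)-stable span, so Lemma~\ref{lemma:ell-twist} applies.
\end{itemize}
Taking \(qP\)-multiples if needed, we may work with a point of order \(q\) that is not in \(E(K)\).

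Lemma~\ref{lemma:ell-twist}(3),(4) now gives a dichotomy: either \(P\in E(\mathbb{Q}_{\infty,3})\), or the twist \(E^{(K)}\) has a point of order \(q^k\) over \(F_3\subseteq\mathbb{Q}_{\infty,3}\). In both cases some elliptic curve over \(\mathbb{Q}\) (namely \(E\) or \(E^{(K)}\)) has a point of order \(q\) in \(E(\mathbb{Q}_{\infty,3})_{\mathrm{tors}}\). By Theorem~\ref{theorem:chouQinfty}(iii), the only primes dividing the order of \(E(\mathbb{Q}_{\infty,3})_{\mathrm{tors}}\) are \(2,3,5,7\), since the possible groups are \(\mathbb{Z}/N\mathbb{Z}\) with \(N\le 10\) or \(N\in\{12,21,27\}\), and \(\mathbb{Z}/2\times\mathbb{Z}/2N\) with \(N\le 4\). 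This excludes \(q\in\{13,19,43,67,163\}\), which is the desired contradiction. Hence \(E(F)_{\mathrm{tors}}[q^\infty]=E(K)_{\mathrm{tors}}[q^\infty]\) for every Galois \(F\). Since every finite subfield of \(K_\infty\) lies in such an \(F\), passing to the union gives the statement.

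The only delicate point is ensuring that the hypotheses of Lemma~\ref{lemma:ell-twist} hold for \(F\) Galois inside \(K_\infty\), in particular the \(G_\mathbb{Q}\)-stability of \(\langle P\rangle\). This follows from the cyclicity given by Proposition~\ref{prop:growth-pattern-q>3}: any Galois-stable cyclic group has Galois-stable characteristic subgroups.

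A more elementary cross-check for \(q=43,67,163\) is also available. The rational \(q\)-isogeny then comes from CM, and its isogeny character has image much larger than a \(2\cdot3\)-group. However, this is unnecessary given Chou's theorem.
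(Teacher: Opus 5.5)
Your proof is correct and follows the paper's argument: growth forces a \(G_{\mathbb{Q}}\)-stable cyclic \(q\)-power subgroup (Proposition~\ref{prop:growth-pattern-q>3}), Lemma~\ref{lemma:ell-twist} moves a point of order \(q\) into \(E(\mathbb{Q}_{\infty,3})\) or \(E^{(K)}(\mathbb{Q}_{\infty,3})\), and Theorem~\ref{theorem:chouQinfty}(iii) excludes this. One caution on your optional CM aside: for \(q=163\) a suitable quadratic twist of the curve with \(j=-640320^3\) has isogeny character of order exactly \(81=3^4\), so the size of the image only rules out \(q=43,67\); for \(163\) you would instead need a ramification argument (the image of inertia at \(163\) has order at least \(81\), whereas \(e_{163}(K_\infty/\mathbb{Q})\le 2\)).
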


\begin{proof}
Assume that growth occurs.  By Proposition~\ref{prop:growth-pattern-q>3} there exists a point \(P\in E(F)\) of order \(q\) (if the new part has order \(q^k\), it contains a point of order \(q\)) with \(\langle P\rangle\) stable under \(G_{\mathbb{Q}}\) and \(P\notin E(K)\).  Lemma~\ref{lemma:ell-twist} then yields either \(P\in E(\mathbb{Q}_{\infty,3})\) or the quadratic twist \(E^{(K)}\) possesses a point of order \(q\) in \(\mathbb{Q}_{\infty,3}\).  Theorem~\ref{theorem:chouQinfty}(iii) states that for these primes neither \(E(\mathbb{Q}_{\infty,3})\) nor any quadratic twist can contain a point of order \(q\).  This contradiction shows that growth cannot happen.
\end{proof}

\subsubsection*{The prime \(q=7\)}

Unlike the previous primes, the full equality \(E(K_\infty)_{\mathrm{tors}}[7^\infty] = E(K)_{\mathrm{tors}}[7^\infty]\) does \emph{not} hold in general.  Indeed, by \cite[Corollary 6.9]{chou2021} there exist elliptic curves \(E/\mathbb{Q}\) such that \(E(\mathbb{Q})[7]=0\) but \(E(\mathbb{Q}_{\infty,3}) \cong \mathbb{Z}/7\mathbb{Z}\); taking a quadratic field \(K\) with \(E(K)[7]=0\) gives
\(E(K\mathbb{Q}_{\infty,3})[7] = \mathbb{Z}/7\mathbb{Z}\).

Nevertheless, the growth of \(7\)-torsion can only occur inside the cyclotomic \(\mathbb{Z}_3\)-extension.  The following proposition makes this precise.

\begin{proposition}\label{prop:3-cyclotomic,avoid7}
Let \(E/\mathbb{Q}\) be an elliptic curve, \(P\in E[7]\) a non-zero torsion point with \(\langle P\rangle\) stable under \(G_{\mathbb{Q}}\), and set \(F=\mathbb{Q}(P)\).  
If \(K\) is an imaginary quadratic field and \(K\subseteq FK\subseteq K_{\infty}\), where \(K_{\infty}\) is the compositum of all \(\mathbb{Z}_3\)-extensions of \(K\), then \(FK/K\) is a cyclotomic extension; more precisely, it is a finite layer of the cyclotomic \(\mathbb{Z}_3\)-extension of \(K\).
\end{proposition}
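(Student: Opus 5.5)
The idea is to use the fact that \(F=\mathbb{Q}(P)\) is abelian over \(\mathbb{Q}\). Then \(FK\) is abelian over \(\mathbb{Q}\), and inside \(K_\infty\) only the cyclotomic direction is abelian over \(\mathbb{Q}\).

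First I pin down the degrees. The character \(\psi:G_{\mathbb{Q}}\to \operatorname{Aut}(\langle P\rangle)\cong(\mathbb{Z}/7\mathbb{Z})^\times\) shows that \(F/\mathbb{Q}\) is cyclic of degree dividing \(6\). Because \(FK\subseteq K_\infty\), the degree \([FK:K]\) is a power of \(3\), so \([FK:K]\in\{1,3\}\). If it equals \(1\) the claim is trivial, since \(FK=K\) is the zeroth layer. So I assume \([FK:K]=3\).

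Next I use that \(FK/\mathbb{Q}\) is the compositum of two abelian extensions \(F\) and \(K\), hence is abelian. Its Galois group over \(\mathbb{Q}\) has order \(6\), so it is cyclic, \(C_2\times C_3\). Let \(F_3\) be the fixed field of the \(C_2\)-part. Then \(F_3/\mathbb{Q}\) is cyclic cubic and \(FK=KF_3\).

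Now I pass to the Galois-group decomposition recalled in Subsection~\ref{subsection:zpextension}. The field \(FK\) corresponds to an open subgroup \(H\subseteq \Gal{K_\infty}{K}\) of index \(3\). Since \(FK/\mathbb{Q}\) is Galois, \(H\) is stable under the conjugation action of \(\Gal{K}{\mathbb{Q}}=\langle c\rangle\). Because \(FK/\mathbb{Q}\) is abelian, \(c\) acts trivially on \(\Gal{K_\infty}{K}/H\cong \Gal{FK}{K}\).

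On the other hand, \(c\) acts by \(-1\) on the quotient of \(\Gal{K_\infty}{K}\) by \(\Gal{K_\infty}{K}^+\), which is the Galois group of \(K_{\mathrm{anti}}/K\). The image of \(\Gal{K_\infty}{K}^-\) in the quotient by \(H\) is fixed by \(c\), but \(c\) also acts on it by \(-1\). So this image is killed by \(2\), and since it is a \(3\)-group it is trivial. Hence \(\Gal{K_\infty}{K}^-\subseteq H\), which gives \(FK\subseteq K_\infty^{\Gal{K_\infty}{K}^-}=K_{\mathrm{cyc}}\).

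Finally, \(FK/K\) is a degree-\(3\) subextension of \(K_{\mathrm{cyc}}/K\). Since \(\Gal{K_{\mathrm{cyc}}}{K}\cong\mathbb{Z}_3\), it must be the first layer \(K_1\) of the cyclotomic \(\mathbb{Z}_3\)-extension.

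The step I expect to need the most care is the eigenspace argument. It must be phrased for \(p=3\) odd, so that the decomposition \(\Gal{K_\infty}{K}=\Gal{K_\infty}{K}^+\oplus\Gal{K_\infty}{K}^-\) holds. One has to check that "abelian over \(\mathbb{Q}\)" translates exactly into "\(c\) acts trivially on \(\Gal{K_\infty}{K}/H\)", and then argue that \(c\) acting trivially forces the \(-\)-part into \(H\).

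As a backup route, I could instead invoke Lemma~\ref{lemma:ell-twist} directly. It gives either \(F\subseteq\mathbb{Q}_{\infty,3}\) or \(F_3\subseteq\mathbb{Q}_{\infty,3}\), and in both cases \(FK\subseteq K\mathbb{Q}_{\infty,3}=K_{\mathrm{cyc}}\). The only extra check is that the lemma's hypothesis \(P\notin E(K)\) holds; this is automatic when \([FK:K]=3\).
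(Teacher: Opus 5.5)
Your proof is correct and follows the same plan as the paper. Both show that \(FK/\mathbb{Q}\) is abelian and then conclude that an abelian-over-\(\mathbb{Q}\) subfield of \(K_\infty\) lies in \(K_{\mathrm{cyc}}\); you get abelianness directly because \(FK\) is a compositum of abelian fields, while the paper does a case analysis on \([F:\mathbb{Q}]\) and \([F\cap K:\mathbb{Q}]\). For the final step, your eigenspace argument (\(c\) acts trivially on \(\Gal{K_\infty}{K}/H\) but by \(-1\) on \(\Gal{K_\infty}{K}^-\), forcing \(\Gal{K_\infty}{K}^-\subseteq H\)) is more complete than the paper's: the paper only cites the uniqueness of \(K_{\mathrm{cyc}}\) among \(\mathbb{Z}_3\)-extensions of \(K\) abelian over \(\mathbb{Q}\), and that statement about full \(\mathbb{Z}_3\)-extensions does not by itself place a finite abelian-over-\(\mathbb{Q}\) subextension inside \(K_{\mathrm{cyc}}\).
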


\begin{proof}
  Since \(\langle P\rangle\) is stable under \(G_{\mathbb{Q}}\), the field \(F=\mathbb{Q}(P)\) is Galois over \(\mathbb{Q}\) and \(\Gal{F}{\mathbb{Q}}\) embeds into \(\operatorname{Aut}(\langle P\rangle)\cong(\mathbb{Z}/7\mathbb{Z})^{\times}\). Hence \(F/\mathbb{Q}\) is a cyclic extension of degree \(d\in\{2,3,6\}\).

  Set \(L=FK=K(P)\). The hypothesis \(L\subseteq K_{\infty}\) forces \([L:K]\) to be a power of \(3\) because \(K_{\infty}/K\) is a pro-\(3\) extension. Let \(e=[F\cap K:\mathbb{Q}]\); then \([L:K]=d/e\).

  We examine the three possibilities for \(d\):
  \begin{itemize}
    \item \(d=2\): the only way \(d/e\) is a power of \(3\) is \(e=2\), so \(K\subseteq F\) and \(L=K\); the statement is trivial.
    \item \(d=6\): again \(d/e\) a power of \(3\) forces \(e=2\), whence \(K\subseteq F\) and \(L=F\). In this case \(L/\mathbb{Q}\) is a cyclic sextic extension, hence abelian.
    \item \(d=3\): an imaginary quadratic field contains no cubic subfield, so \(e=1\) and \([L:K]=3\). Then \(L\) is the compositum of the linearly disjoint extensions \(F/\mathbb{Q}\) and \(K/\mathbb{Q}\), and \(\Gal{L}{\mathbb{Q}}\cong C_{3}\times C_{2}\cong C_{6}\) is abelian.
  \end{itemize}
  In all non-trivial situations \(L/\mathbb{Q}\) is therefore an abelian extension. By the Kronecker--Weber theorem, \(L\subseteq\mathbb{Q}(\zeta_{f})\) for some integer \(f\), and consequently \(L\subseteq K(\zeta_{f})\). Thus \(L/K\) is generated by roots of unity, i.e.\ a cyclotomic extension.

  Finally, \(L\subseteq K_{\infty}\) and \(L/\mathbb{Q}\) is abelian. The unique \(\mathbb{Z}_{3}\)-extension of \(K\) that is abelian over \(\mathbb{Q}\) is the cyclotomic \(\mathbb{Z}_{3}\)-extension \(K_{\infty}^{\mathrm{cyc}}=K\mathbb{Q}_{\infty}^{(3)}\). Since \([L:K]\) is a power of \(3\), \(L\) must be a finite layer of \(K_{\infty}^{\mathrm{cyc}}/K\).
\end{proof}

As an immediate consequence we obtain:

\begin{proposition}\label{prop:7-noncyclo}
Let \(K\) be an imaginary quadratic field and let \(M/K\) be a non‑cyclotomic \(\mathbb{Z}_3\)-extension.  Then for every elliptic curve \(E/\mathbb{Q}\),
\[
E(M)_{\mathrm{tors}}[7^\infty] = E(K)_{\mathrm{tors}}[7^\infty].
\]
\end{proposition}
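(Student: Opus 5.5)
The plan is to argue by contradiction, reducing any growth of \(7\)-primary torsion in \(M\) to a \(G_{\mathbb{Q}}\)-stable point of order \(7\). Proposition~\ref{prop:3-cyclotomic,avoid7} will then force the field of definition of that point into the cyclotomic \(\mathbb{Z}_3\)-extension.

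Suppose \(E(M)_{\mathrm{tors}}[7^\infty]\neq E(K)_{\mathrm{tors}}[7^\infty]\), and choose a finite layer \(M_n\) of \(M/K\) where the growth is already visible. The layer \(M_n\) need not be Galois over \(\mathbb{Q}\). So let \(F'\) be its Galois closure over \(\mathbb{Q}\); as in the proof of Theorem~\ref{theorem:p>=5}, \(F'\subseteq K_\infty\) and \([F':K]\) is a power of \(3\).

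The first step is to show \(E(F')[7^\infty]\) is cyclic. If \(E[7]\subseteq E(F')\), then Lemma~\ref{lemma:rootsofunity} gives \(\mu_7\subset F'\subset K_\infty\). This contradicts Proposition~\ref{prop:unramifiedoutsidep}: every \(\mathbb{Z}_3\)-extension is unramified at \(7\), hence so is the compositum \(K_\infty/K\), while \(K(\zeta_7)/K\) has ramification index at least \(3\) above \(7\). This argument also covers \(K=\mathbb{Q}(\sqrt{-3})\).

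Next, apply Lemma~\ref{lemma:growthpattern=ellnotp} along the tower \(M_0=K\subset M_1\subset\cdots\subset M_n\). Each layer's \(7\)-primary part embeds in the cyclic group \(E(F')[7^\infty]\), so case 3 of that lemma cannot occur, and case 2 cannot occur on top of a nonzero group. Since growth does occur, we get \(E(K)[7^\infty]=0\) and a point \(P\in E(M_n)\) of order \(7\) with \(P\notin E(K)\).

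Because \(F'/\mathbb{Q}\) is Galois, \(E(F')[7^\infty]\) is a \(G_{\mathbb{Q}}\)-module. Being cyclic, its unique subgroup of order \(7\), namely \(\langle P\rangle\), is \(G_{\mathbb{Q}}\)-stable. Proposition~\ref{prop:3-cyclotomic,avoid7} applies since \(K(P)\subseteq M_n\subseteq K_\infty\). It shows that \(K(P)\) is a nontrivial finite layer of \(K_{\mathrm{cyc}}/K\). Nontriviality holds because \(P\notin E(K)\). Hence \(K\subsetneq K(P)\subseteq M\cap K_{\mathrm{cyc}}\).

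The main obstacle is the last step, concluding \(M\cap K_{\mathrm{cyc}}=K\). Inside \(\operatorname{Gal}(K_\infty/K)\cong\mathbb{Z}_3^2\), two distinct \(\mathbb{Z}_3\)-extensions correspond to two distinct rank-one kernels. Their sum may have finite index greater than \(1\), in which case the extensions share a nontrivial first layer. So "distinct" alone does not give trivial intersection, contrary to the assertion used in Lemma~\ref{lemma:rootsofunity-p=3}.

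I would therefore read "non-cyclotomic" as \(M\cap K_{\mathrm{cyc}}=K\), equivalently \(M_1\neq K_1^{\mathrm{cyc}}\). This holds for \(K_{\mathrm{anti}}\), which corresponds to the eigenspace decomposition \(\operatorname{Gal}(K_\infty/K)=\operatorname{Gal}(K_\infty/K)^+\oplus\operatorname{Gal}(K_\infty/K)^-\). Under this reading the inclusion above is a contradiction, and the proposition follows.

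Without this reading, the proof still yields a weaker conclusion: any \(7\)-growth in \(M\) takes place inside \(M\cap K_{\mathrm{cyc}}\). The example from \cite[Corollary 6.9]{chou2021} suggests that genuine growth then really can occur, so the stronger hypothesis seems necessary.
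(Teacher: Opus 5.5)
Your argument follows the paper's route: a $G_{\mathbb{Q}}$-stable point of order $7$, then Proposition~\ref{prop:3-cyclotomic,avoid7}, then a contradiction with non-cyclotomicity. Your objection to the last step is correct, and it points to a genuine error in the paper, not just in your reading. The paper's final sentence (``contradicting the fact that $M$ is non-cyclotomic'') silently uses the claim, stated in the proof of Lemma~\ref{lemma:rootsofunity-p=3}, that distinct $\mathbb{Z}_3$-extensions of $K$ meet only in $K$. That claim is false.

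In fact the proposition is false when ``non-cyclotomic'' only means $M\neq K_{\mathrm{cyc}}$. So your stronger hypothesis is necessary, not just convenient. Here is a concrete counterexample.
\begin{itemize}
\item Theorem~\ref{theorem:chouQinfty}(iii) allows $E(\mathbb{Q}_{\infty,3})_{\mathrm{tors}}\cong\mathbb{Z}/21\mathbb{Z}$. By Theorem~\ref{theorem:mazurQ} and Lemma~\ref{lemma:grothpatternp=ell}, such an $E$ has $E(\mathbb{Q})_{\mathrm{tors}}\cong\mathbb{Z}/3\mathbb{Z}$.
\item Hence $E$ has a point $P$ of order $7$ with $\mathbb{Q}(P)$ equal to the first layer $\mathbb{Q}_1$ of $\mathbb{Q}_{\infty,3}$: its degree is a power of $3$ dividing $6$, and it is not $1$.
\item Let $K$ be an imaginary quadratic field with $E(K)[7]=0$. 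Every $K$ works: a point of order $7$ in $E(K)$ cannot lie in $\langle P\rangle$, and otherwise $E[7]\subseteq E(K\mathbb{Q}_1)$, which forces $\mu_7\subset K\mathbb{Q}_1$; this is impossible since $K\mathbb{Q}_1/K$ is unramified above $7$.
\item Write $\Gamma^{\pm}=\Gal{K_\infty}{K}^{\pm}$ with topological generators $\gamma_{\pm}$, and let $M$ be the fixed field of $H=\mathbb{Z}_3(3\gamma_{+}+\gamma_{-})$.
\item Since $\{\gamma_{+},\,3\gamma_{+}+\gamma_{-}\}$ is a $\mathbb{Z}_3$-basis of $\Gal{K_\infty}{K}$, $M/K$ is a $\mathbb{Z}_3$-extension. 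Since $H\neq\Gamma^-$, $M\neq K_{\mathrm{cyc}}$.
\item However, $H+\Gamma^-=3\mathbb{Z}_3\gamma_{+}\oplus\Gamma^-$ has index $3$. So $M\cap K_{\mathrm{cyc}}$ is the cubic layer $K\mathbb{Q}_1$, whence $P\in E(M)$ and $E(M)_{\mathrm{tors}}[7^\infty]\neq 0=E(K)_{\mathrm{tors}}[7^\infty]$.
\end{itemize}
The same example breaks the $q=7$ clause of Theorem~\ref{theorem:main}(ii).

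Under the hypothesis $M\cap K_{\mathrm{cyc}}=K$ (for instance $M=K_{\mathrm{anti}}$), your proof is complete and correct. Your passage to the Galois closure $F'$ also repairs a second, smaller gap in the paper. The paper applies Proposition~\ref{prop:growth-pattern-q>3}, which is stated only for layers that are Galois over $\mathbb{Q}$, directly to $M$, whose layers generally are not. Without $F'$, the $G_{\mathbb{Q}}$-stability of $\langle P\rangle$ needed for Proposition~\ref{prop:3-cyclotomic,avoid7} is unjustified.
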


\begin{proof}
If \(E(M)[7] \neq E(K)[7]\), then by Proposition~\ref{prop:growth-pattern-q>3} there exists a point \(P\in E(M)\) of order \(7\) with \(\langle P\rangle\) stable under \(G_{\mathbb{Q}}\).  Proposition~\ref{prop:3-cyclotomic,avoid7} forces \(   K\mathbb{Q}(P)/K\) to be a subfield of the cyclotomic \(\mathbb{Z}_3\)-extension, contradicting the fact that \(M\) is non‑cyclotomic.
\end{proof}

\subsubsection*{The prime \(q=3\)}

Here the growth pattern is no longer given by Lemma~\ref{lemma:growthpattern=ellnotp} because \(q=p\).  We prove that the \(3\)-primary torsion cannot grow beyond the cyclotomic \(\mathbb{Z}_3\)-extension.

\begin{proposition}\label{prop:3equalcyclo}
Let \(K\) be a quadratic field with \(K\neq \mathbb{Q}(\sqrt{-3})\) and let
\(K_\infty\) be the compositum of all \(\mathbb{Z}_3\)-extensions of \(K\).
Then for every elliptic curve \(E/\mathbb{Q}\),
\[
E(K_\infty)_{\mathrm{tors}}[3^\infty] = E(K_{\mathrm{cyc}})_{\mathrm{tors}}[3^\infty],
\]
where \(K_{\mathrm{cyc}}\) denotes the cyclotomic \(\mathbb{Z}_3\)-extension of \(K\).
In particular, the possibilities for \(E(K_\infty)_{\mathrm{tors}}[3^\infty]\) are
exactly those listed in Theorem~\ref{theorem:Avci}(3) for the cyclotomic
\(\mathbb{Z}_3\)-extension.
\end{proposition}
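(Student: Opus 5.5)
Reduce to finite layers. Every point of \(E(K_\infty)[3^\infty]\) is defined over some finite subextension, and the Galois closure over \(\mathbb{Q}\) of a finite subextension stays inside \(K_\infty\). It therefore suffices to show that \(E(F)[3^\infty]\subseteq E(K_{\mathrm{cyc}})\) for every finite \(F\) with \(K\subseteq F\subseteq K_\infty\) and \(F/\mathbb{Q}\) Galois. If \(K\) is real quadratic, then \(K_\infty=K_{\mathrm{cyc}}\) by Proposition~\ref{prop:allZpext} and there is nothing to prove. So assume \(K\) is imaginary quadratic with \(K\neq\mathbb{Q}(\sqrt{-3})\).

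By Lemma~\ref{lemma:rootsofunity-p=3}, \(\mu_3\not\subset F\). The Weil pairing (Lemma~\ref{lemma:rootsofunity}) then shows that \(E(F)[3]\) is not all of \(E[3]\), so \(E(F)[3^\infty]\) is cyclic, say \(\mathbb{Z}/3^k\mathbb{Z}\). Since \(F/\mathbb{Q}\) is Galois, this cyclic group is \(G_{\mathbb{Q}}\)-stable.

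If \(E(F)[3^\infty]\subseteq E(K)\), we are done. Otherwise, let \(P\) be a generator; then \(P\notin E(K)\), and Lemma~\ref{lemma:p-twist} applies. Set \(F_0=\mathbb{Q}(P)\), which is abelian over \(\mathbb{Q}\) of degree dividing \(2\cdot 3^{k-1}\).
\begin{itemize}
\item In case (1) of Lemma~\ref{lemma:p-twist}, \(F_0\subseteq\mathbb{Q}_{\infty,3}\), so \(P\in E(K\mathbb{Q}_{\infty,3})=E(K_{\mathrm{cyc}})\).
\item In case (2), \(F_0=KF_3\) with \(F_3\subseteq\mathbb{Q}_{\infty,3}\), so \(F_0\subseteq K\mathbb{Q}_{\infty,3}=K_{\mathrm{cyc}}\), and again \(P\in E(K_{\mathrm{cyc}})\).
\end{itemize}
In both cases the whole cyclic group \(\langle P\rangle=E(F)[3^\infty]\) lies in \(E(K_{\mathrm{cyc}})\). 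This gives the inclusion \(\subseteq\); the reverse inclusion is trivial since \(K_{\mathrm{cyc}}\subseteq K_\infty\).

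The main obstacle is confirming that Lemma~\ref{lemma:p-twist} is applicable with \(L=F\). We need \(F/K\) to be a finite \(3\)-extension unramified outside \(3\), which holds for any finite subextension of \(K_\infty/K\). We also need \(\langle P\rangle\) to be \(G_{\mathbb{Q}}\)-stable, and this is exactly where cyclicity, and hence the hypothesis \(K\neq\mathbb{Q}(\sqrt{-3})\), is used. Alternatively, the conclusion of Lemma~\ref{lemma:p-twist} can be bypassed: \(K(P)/\mathbb{Q}\) is abelian, and \(K_{\mathrm{cyc}}\) is the only part of \(K_\infty\) that is abelian over \(\mathbb{Q}\), since the minus part of \(\Gal{K_\infty}{K}\) acts nontrivially. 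Hence \(K(P)\subseteq K_{\mathrm{cyc}}\), as in Proposition~\ref{prop:3-cyclotomic,avoid7}. The final assertion about the list of possible groups is then read off from Theorem~\ref{theorem:Avci}(3).
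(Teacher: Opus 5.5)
Your proposal is correct and follows essentially the same route as the paper: $\mu_3\not\subset F$ by Lemma~\ref{lemma:rootsofunity-p=3}, cyclicity of $E(F)[3^\infty]$ via the Weil pairing, $G_{\mathbb{Q}}$-stability of $\langle P\rangle$, and then both cases of Lemma~\ref{lemma:p-twist} placing $\mathbb{Q}(P)$ inside $K_{\mathrm{cyc}}$. The differences are minor: you dispose of the real quadratic case directly, and you treat $E(K)[3^\infty]=0$ with the same cyclicity argument rather than separately via Lemma~\ref{lemma:grothpatternp=ell} as the paper does; your alternative argument, that $K_{\mathrm{cyc}}$ is the maximal subfield of $K_\infty$ abelian over $\mathbb{Q}$, is also valid.
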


\begin{proof}
If \(E(K)[3^\infty] = 0\) then by Lemma~\ref{lemma:grothpatternp=ell}, we have for every \(\mathbb{Z}_3\)-extension \(L/K\), \(E(L)[3^\infty] = 0\), hence $E(K_{\infty})[3^\infty]=0$.

Assume now that \(E(K)[3^\infty]\neq 0\) and let \(F\) be a finite extension of
\(K\) inside \(K_\infty\) with \(F/\mathbb{Q}\) Galois.  Because
\(K\neq \mathbb{Q}(\sqrt{-3})\), Lemma~\ref{lemma:rootsofunity-p=3} gives
\(\mu_3\not\subset F\).  The Weil pairing then implies that
\(E(F)[3]\) is cyclic, and consequently \(E(F)[3^\infty]\) is a cyclic
group of \(3\)-power order.  If \(E(F)[3^\infty]\) were strictly larger than
\(E(K)[3^\infty]\), it would contain a point \(P\) of order \(3^n\) (\(n\ge 2\))
with \(P\notin E(K)\) and \(\langle P\rangle\) stable under \(G_{\mathbb{Q}}\).

Apply Lemma~\ref{lemma:p-twist} to this point \(P\).  The lemma yields two
possibilities:
\begin{enumerate}
  \item \(P\in E(\mathbb{Q}_{\infty,3})\); then clearly
        \(P\in E(K\mathbb{Q}_{\infty,3})=E(K_{\mathrm{cyc}})\).
  \item \(K\subset \mathbb{Q}(P)\) and there exists a subfield
        \(F_3\subseteq \mathbb{Q}_{\infty,3}\) with \(\mathbb{Q}(P)=KF_3\);
        hence \(\mathbb{Q}(P)\subseteq K_{\mathrm{cyc}}\) and again
        \(P\in E(K_{\mathrm{cyc}})\).
\end{enumerate}
Thus in either case every \(3\)-power torsion point that appears in any
finite layer \(F\) already lies in the cyclotomic \(\mathbb{Z}_3\)-extension.
Therefore
\[
E(K_\infty)_{\mathrm{tors}}[3^\infty] \subseteq E(K_{\mathrm{cyc}})_{\mathrm{tors}}[3^\infty].
\]
The reverse inclusion is obvious.  The last statement follows from
Theorem~\ref{theorem:Avci}(3).
\end{proof}

\begin{remark}
    The growth of the \(3^\infty\)-torsion in \(F/K\) can be investigated in detail via Galois cohomology. We will not address this problem here.
\end{remark}

\subsubsection*{The prime \(q=2\)}
\begin{proposition}\label{prop:2-torsion-KneqQsqrt-3}
Let \(K\) be a quadratic field, and let \(K_\infty\) be the compositum of all \(\mathbb{Z}_3\)-extensions of \(K\). For an elliptic curve \(E/\mathbb{Q}\), the \(2\)-primary torsion subgroup satisfies the following:
\begin{enumerate}
  \item If \(E(K)[2] \neq 0\), then \(E(K_\infty)_{\mathrm{tors}}[2^\infty] = E(K)_{\mathrm{tors}}[2^\infty]\).
  \item If \(E(K)[2] = 0\), then either
  \[
  E(K_\infty)_{\mathrm{tors}}[2^\infty] = 0,
  \]
  or there exists an integer \(a \ge 1\) such that
  \[
  E(K_\infty)_{\mathrm{tors}}[2^\infty] \cong (\mathbb{Z}/2^a\mathbb{Z})^2.
  \]
  Moreover, the latter case occurs if and only if the compositum \(K L\) is a cubic extension of \(K\) contained in \(K_\infty\), where \(L = \mathbb{Q}(E[2])\) is the field generated by the \(2\)-torsion points of \(E\).
\end{enumerate}
\end{proposition}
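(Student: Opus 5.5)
The plan is to work with finite layers: pick $F$ with $K\subseteq F\subseteq K_\infty$ and $F/\mathbb{Q}$ Galois, set $G=\Gal{F}{K}$ (a finite abelian $3$-group), and pass to the direct limit at the end. The tool is the module-theoretic argument of Lemma~\ref{lemma:growthpattern=ellnotp}, now run for $q=2$, $p=3$ but with a general finite abelian $3$-group $G$ instead of a cyclic one. This is needed because $\Gal{K_\infty}{K}$ may be $\mathbb{Z}_3^2$.

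\textbf{Step 1: decompose the group ring.} Since $2\nmid |G|$, the ring $\mathbb{Z}_2[G]$ splits as a product over Galois orbits of characters $\chi$ of $G$ into unramified extensions $\mathcal{O}_\chi$ of $\mathbb{Z}_2$. The residue degree of $\mathcal{O}_\chi$ is $\operatorname{ord}_{3^j}(2)=2\cdot 3^{j-1}$, where $3^j$ is the order of $\chi$. The trivial character gives the factor $\mathbb{Z}_2$, and every nontrivial character gives residue degree at least $2$. Accordingly $M_F:=E(F)[2^\infty]$ splits as $M_{\mathrm{triv}}\oplus M_{\mathrm{new}}$. Exactly as in Step~3 of Lemma~\ref{lemma:growthpattern=ellnotp}, the $G$-invariants of $M_{\mathrm{new}}$ vanish, so $M_{\mathrm{triv}}=M_F^G=E(K)[2^\infty]$.

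\textbf{Step 2: count generators.} Each nonzero cyclic $\mathcal{O}_\chi$-summand of $M_{\mathrm{new}}$ needs $2\cdot 3^{j-1}$ generators over $\mathbb{Z}_2$, while $M_F$ needs at most $2$. This has two consequences:
\begin{itemize}
\item If $E(K)[2]\neq 0$, then $M_{\mathrm{triv}}\neq0$ already uses one generator, which forces $M_{\mathrm{new}}=0$. Hence $E(F)[2^\infty]=E(K)[2^\infty]$ for every such $F$, and taking the union gives part (1).
\item If $E(K)[2]=0$, then either $M_F=0$, or $M_F\cong\mathcal{O}_\chi/2^a\cong(\mathbb{Z}/2^a\mathbb{Z})^2$ for a single character orbit with $j=1$, i.e.\ $\chi$ of order exactly $3$.
\end{itemize}

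\textbf{Step 3: the limit and the value of $a$.} In the nonzero case, $M_F$ grows monotonically as $F$ increases, and it stays of the shape $(\mathbb{Z}/2^a\mathbb{Z})^2$ because it always contains $E[2]$ and is $2$-generated. The exponent $a$ is bounded: by Lemma~\ref{lemma:rootsofunity} we get $\mu_{2^a}\subset K_\infty$, and Lemma~\ref{lemma:rootsofunity-p=3} (applied to each $\mathbb{Z}_3$-extension, hence to the finite layers) gives $\mu(F)=\mu(K)$, so $a\le 2$. Therefore the limit is $(\mathbb{Z}/2^a\mathbb{Z})^2$ for a fixed $a\ge1$.

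\textbf{Step 4: the criterion.} Suppose $E(K_\infty)[2^\infty]\neq0$. Then $E[2]\subseteq E(F)$ for some $F$, so $KL\subseteq K_\infty$. The group $G$ acts on $E[2]$ through the order-$3$ character $\chi$, so $\Gal{KL}{K}$ is cyclic of order $3$. Conversely, suppose $KL/K$ is cubic and contained in $K_\infty$. Then $E[2]\subseteq E(K_\infty)$, so the $2$-primary part is nonzero, and Step 2 gives the stated shape.

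The main obstacle I expect is Step 1 for non-cyclic $G$. One must check carefully that the invariants of the non-trivial components really vanish, and that a single non-trivial $\mathcal{O}_\chi$-component is the only option. One must also handle compatibility as $F$ varies: the same order-$3$ character has to persist, which follows because $E[2]$, and hence the cubic field $KL$, is fixed once it appears.
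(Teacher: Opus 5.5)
Your proof is correct, but it takes a different route from the paper at the one point where the difference matters.

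The paper works one $\mathbb{Z}_3$-extension $M/K$ at a time. It applies the cyclic Lemma~\ref{lemma:growthpattern=ellnotp} and notes that $d_1=2$ and $d_j\ge 6$ for $j\ge 2$. From this it concludes that growth can only occur at $M_1$, with new part $(\mathbb{Z}/2^a\mathbb{Z})^2$, and that nothing grows afterwards. It then passes to $K_\infty$ using the containment $M_\infty\subseteq K_\infty$. The criterion comes from $[KL:K]\mid[M_1:K]=3$, together with the fact that every cyclic cubic subextension of $K_\infty/K$ is a first layer.

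This is short, reuses the lemma already proved, and is complete when $K$ is real, since then $K_\infty$ is a single $\mathbb{Z}_3$-extension. When $K$ is imaginary, however, $\Gal{K_\infty}{K}\cong\mathbb{Z}_3^2$ has non-cyclic finite quotients. So a finite layer of $K_\infty$ need not lie in any single $\mathbb{Z}_3$-extension, and containment only gives $E(K_\infty)[2^\infty]\supseteq E(M_\infty)[2^\infty]$. Part (1) is likewise checked only extension by extension.

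Your decomposition of $\mathbb{Z}_2[G]$ for an arbitrary finite abelian $3$-group $G$ controls every finite layer directly. The ingredients are the residue degree $\operatorname{ord}_{3^j}(2)=2\cdot 3^{j-1}$, the vanishing of invariants on each nontrivial isotypic piece (because $\chi(g)-1$ is a $2$-adic unit), and the bound of at most two generators. With this, the passage to the union is automatic. Your Weil-pairing bound on $a$ also replaces the paper's ``stabilises thereafter'', which is justified only inside one $\mathbb{Z}_3$-extension.

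Two small adjustments are worth making.

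\emph{Step 3.} Lemma~\ref{lemma:rootsofunity-p=3} concerns a single $\mathbb{Z}_3$-extension, not arbitrary finite layers of $K_\infty$. Moreover, for $K=\mathbb{Q}(\sqrt{-3})$, which the proposition allows, the assertion $\mu(F)=\mu(K)$ is false. You only need the $2$-power part. The field $K(\mu_{2^a})\subseteq F$ has degree over $K$ that is both a power of $2$ (it divides $\varphi(2^a)$) and a power of $3$. Hence $\mu_{2^a}\subset K$ and $a\le 2$, uniformly in $K$.

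\emph{Your stated obstacle.} The ``compatibility of the character as $F$ varies'' is not needed. Each nonzero $E(F)[2^\infty]$ is some $(\mathbb{Z}/2^{a_F}\mathbb{Z})^2$, these groups increase with $F$, and the bound on $a_F$ finishes the argument.
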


\begin{proof}
Since \(q = 2\) and \(p = 3\) are distinct, Lemma~\ref{lemma:growthpattern=ellnotp} applies to any \(\mathbb{Z}_3\)-extension. For \(d_j = \operatorname{ord}_{3^j}(2)\) we have \(d_1 = 2\) and \(d_j \ge 6\) for \(j \ge 2\). Hence any growth of the \(2\)-power torsion in a \(\mathbb{Z}_3\)-extension can occur only when passing from the base field to the first layer, and the new part requires at least \(d_1 = 2\) generators as a \(\mathbb{Z}_2\)-module. Since \(E(K_n)[2^\infty]\) can be generated by at most two elements, if \(E(K)[2] \neq 0\) then the total number of generators would exceed two after a non‑trivial growth; therefore \(E(K_\infty)[2^\infty] = E(K)[2^\infty]\). This proves (1).

Now assume \(E(K)[2] = 0\). If no \(\mathbb{Z}_3\)-extension of \(K\) acquires non‑trivial \(2\)-torsion in its first layer, then clearly \(E(K_\infty)[2^\infty] = 0\). Otherwise, there exists a \(\mathbb{Z}_3\)-extension \(M/K\) such that \(E(M_1)[2^\infty]\) is strictly larger than \(E(K)[2^\infty] = 0\). By Lemma~\ref{lemma:growthpattern=ellnotp} the new part must be isomorphic to \((\mathbb{Z}/2^a\mathbb{Z})^2\) for some \(a \ge 1\) (case (3) with \(E(K)[2^\infty]=0\)), and the \(2\)-torsion stabilises thereafter. Consequently, \(E(M_\infty)[2^\infty] \cong (\mathbb{Z}/2^a\mathbb{Z})^2\). Because \(K_\infty\) contains \(M_\infty\), we obtain the same isomorphism for \(E(K_\infty)[2^\infty]\). This establishes the two possibilities in (2).

It remains to prove the equivalent condition for the non‑zero case. Set \(L = \mathbb{Q}(E[2])\). Because \(E(K)[2] = 0\), there are no non‑zero \(K\)-rational \(2\)-torsion points; hence \(E[2]\) is an irreducible \(G_K\)-module and \(\operatorname{Gal}(L/\mathbb{Q})\) is either \(C_3\) or \(S_3\). Note that \(E(\mathbb{Q})[2] = 0\) as well.

Suppose first that a non‑trivial growth occurs. By the second paragraph of the proof of (2) above, there exists a \(\mathbb{Z}_3\)-extension \(M/K\) such that its first layer \(M_1\) satisfies \(E(M_1)[2] \cong (\mathbb{Z}/2\mathbb{Z})^2\). In particular, \(E(M_1)\) contains the full \(2\)-torsion \(E[2]\). Hence \(L = \mathbb{Q}(E[2]) \subseteq M_1\), which implies \(K L \subseteq M_1\) and \([KL:K] \mid [M_1:K] = 3\). Since \(E(K)[2] = 0\), we have \(L \not\subseteq K\), therefore \([KL:K] = 3\) and \(K L = M_1\). Because \(M_1 \subseteq K_\infty\), it follows that \(K L\) is a cubic extension of \(K\) contained in \(K_\infty\).

Conversely, assume that \([KL:K] = 3\) and \(KL \subseteq K_\infty\). Then \(KL/K\) is a cyclic cubic extension (it is Galois because \(K_\infty/K\) is abelian). By the structure of \(K_\infty\) as the compositum of all \(\mathbb{Z}_3\)-extensions, every cyclic cubic subextension is the first layer of some \(\mathbb{Z}_3\)-extension. Let \(M/K\) be such an extension with \(M_1 = KL\). The field \(KL\) contains \(L = \mathbb{Q}(E[2])\), so \(E(M_1)[2] = E[2] \neq 0\). Applying the growth pattern described above, we obtain \(E(M_1)[2^\infty] \cong (\mathbb{Z}/2^a\mathbb{Z})^2\) for some \(a \ge 1\), and hence \(E(K_\infty)[2^\infty] \cong (\mathbb{Z}/2^a\mathbb{Z})^2\). This completes the proof of the equivalence and of the proposition.
\end{proof}

\subsection{The case \(K = \mathbb{Q}(\sqrt{-3})\)}

By Lemma~\ref{lemma:rootsofunity-p=3}, if \(F/K\) is contained in \(K_{\mathrm{cyc}}\), then the degree \([F:\mathbb{Q}]\) can be arbitrarily large. Nevertheless, we now examine each of the prime cases handled in the previous subsection to determine whether the arguments carry over to the present setting.

\begin{itemize}
    \item For \(q = 5, 11, 17\), the argument relied only on the fact that
    \[
    \gcd([F:\mathbb{Q}], \varphi(q^k)) = 2
    \]
    for all \(k \ge 1\), which holds for every intermediate subfield \(F/K\) of \(K_\infty/K\). Lemma~\ref{lemma:avci-lemma3.1} therefore applies without any restriction on \(K\). Hence the same conclusion holds for \(K = \mathbb{Q}(\sqrt{-3})\):
    \[
    E(F)_{\mathrm{tors}}[q^\infty] = E(K)_{\mathrm{tors}}[q^\infty].
    \]

    \item For \(q = 13, 19, 43, 67, 163\), the argument uses a combination of Proposition~\ref{prop:growth-pattern-q>3}, Lemma~\ref{lemma:ell-twist}, and Theorem~\ref{theorem:chouQinfty}(iii). None of these results imposes any restriction on \(K\). Hence the conclusion also holds for \(K = \mathbb{Q}(\sqrt{-3})\):
    \[
    E(F)_{\mathrm{tors}}[q^\infty] = E(K)_{\mathrm{tors}}[q^\infty].
    \]

    \item For \(q = 7\), Proposition~\ref{prop:3-cyclotomic,avoid7} excludes the cyclotomic \(\mathbb{Z}_3\)-extension. For every non‑cyclotomic \(\mathbb{Z}_3\)-extension \(F/K\), Proposition~\ref{prop:7-noncyclo} remains valid (the proof does not use the fact \(K\neq\mathbb{Q}(\sqrt{-3})\) except for the uniqueness of the cyclotomic extension, which is still true). Thus
    \[
    E(F)_{\mathrm{tors}}[7^\infty] = E(K)_{\mathrm{tors}}[7^\infty].
    \]

    \item For \(q = 2\), Proposition~\ref{prop:2-torsion-KneqQsqrt-3} holds for all quadratic fields \(K\); its proof makes no assumption on \(K\). Thus the result remains true for \(K = \mathbb{Q}(\sqrt{-3})\).

    \item The case \(q = 3\) requires special attention, as Proposition~\ref{prop:3equalcyclo} fails in this setting.
\end{itemize}

\begin{theorem}\label{theorem:p=3=3}
    Theorem \ref{theorem:p=3not-3} holds for $K=\mathbb{Q}(\sqrt{-3})$ except for \(q=3\)
\end{theorem}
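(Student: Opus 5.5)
The plan is to check, prime by prime, that each argument from the case \(K\neq\mathbb{Q}(\sqrt{-3})\) survives when \(K=\mathbb{Q}(\sqrt{-3})\). The crucial point is that when \(q\neq 3\), the cyclicity of the \(q\)-primary part still holds. I will go back through Proposition~\ref{prop:growth-pattern-q>3} and replace the appeal to Lemma~\ref{lemma:rootsofunity-p=3}(1) with part (2). For \(K=\mathbb{Q}(\sqrt{-3})\), every intermediate field \(F\) of \(K_\infty/K\) has \(\mu(F)\subseteq \mu_{2\cdot 3^\infty}\). The same ramification argument used in Lemma~\ref{lemma:rootsofunity-p=3} for primes \(>3\) shows this also holds inside the compositum, since \(K_\infty/K\) is pro-\(3\) and unramified outside \(3\). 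Hence \(\mu_q\not\subset F\) for every prime \(q\ge5\). By Lemma~\ref{lemma:rootsofunity}, \(E(F)[q]\) is then cyclic. The growth analysis of Lemma~\ref{lemma:growthpattern=ellnotp} is purely module-theoretic and independent of \(K\). It therefore gives the same conclusions as before: growth is single-step, it happens only when \(E(K)[q^\infty]=0\), and the new cyclic subgroup is \(G_{\mathbb{Q}}\)-stable because \(F/\mathbb{Q}\) is Galois.

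With that in hand, I will treat the prime families as follows.

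\begin{itemize}
\item \(q\in\{5,11,17\}\): The degree of any Galois \(F\subseteq K_\infty\) over \(\mathbb{Q}\) is \(2\cdot 3^s\), so \(\gcd([F:\mathbb{Q}],\varphi(q^k))=2\). Lemma~\ref{lemma:avci-lemma3.1} then places the torsion over a quadratic subfield of \(F\). The only quadratic subfield is \(K\), because \(\operatorname{Gal}(F/\mathbb{Q})\) has a normal Sylow \(3\)-subgroup of index \(2\).
\item \(q\in\{13,19,43,67,163\}\): Lemma~\ref{lemma:ell-twist} is stated for an arbitrary quadratic \(K\). It produces a point of order \(q\) on \(E\) or on \(E^{(K)}\) over \(\mathbb{Q}_{\infty,3}\), which contradicts Theorem~\ref{theorem:chouQinfty}(iii).
\item \(q=7\): Propositions~\ref{prop:3-cyclotomic,avoid7} and \ref{prop:7-noncyclo} use only that \(K\) is imaginary and that the cyclotomic \(\mathbb{Z}_3\)-extension is the unique \(\mathbb{Z}_3\)-extension of \(K\) abelian over \(\mathbb{Q}\). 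This follows from the eigenspace decomposition of Section~\ref{subsection:zpextension}.
\item \(q=2\): The proof of Proposition~\ref{prop:2-torsion-KneqQsqrt-3} uses only \(\operatorname{ord}_{3^j}(2)\) and the two-generator bound, so it carries over verbatim.
\end{itemize}

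The main obstacle is to confirm carefully that no hidden use of \(\mu_3\not\subset F\) enters the \(q\ge5\) arguments. In particular, the cyclicity in Proposition~\ref{prop:growth-pattern-q>3} must be obtained for fields \(F\) in the full compositum, not merely in a single \(\mathbb{Z}_3\)-extension. I would handle this directly. If \(\mu_q\subset F\) with \(q\ge5\), then \(K(\zeta_q)/K\) is ramified at \(q\) with index at least \((q-1)/2>1\), which is impossible inside \(K_\infty\). Finally, \(q=3\) is excluded from the statement, since \(\mu_{3^\infty}\subset K_{\mathrm{cyc}}\) destroys cyclicity there.
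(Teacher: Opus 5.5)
The step that fails is your \(q=7\) bullet. The fact you invoke, that \(K_{\mathrm{cyc}}\) is the only \(\mathbb{Z}_3\)-extension of \(K\) abelian over \(\mathbb{Q}\), is true. Via Proposition~\ref{prop:3-cyclotomic,avoid7} it shows that any new \(7\)-torsion point is defined over the first layer \(K_1\) of \(K_{\mathrm{cyc}}\). But the contradiction in Proposition~\ref{prop:7-noncyclo} also needs that no non-cyclotomic \(\mathbb{Z}_3\)-extension \(M\) contains \(K_1\), and that is false.

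To see this, write \(\operatorname{Gal}(K_\infty/K)=\mathbb{Z}_3u\oplus\mathbb{Z}_3v\) with \(\operatorname{Gal}(K_\infty/K_{\mathrm{cyc}})=\mathbb{Z}_3u\). Then \(\operatorname{Gal}(K_\infty/K_1)=\mathbb{Z}_3u+3\mathbb{Z}_3v\). Let \(M\) be the fixed field of the line \(\mathbb{Z}_3(u+3v)\subseteq\operatorname{Gal}(K_\infty/K_1)\). Since \(u+3v\) is primitive, \(M\) is a \(\mathbb{Z}_3\)-extension of \(K\); it differs from \(K_{\mathrm{cyc}}\) and contains \(K_1\). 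For \(K=\mathbb{Q}(\sqrt{-3})\) one has \(K_1=\mathbb{Q}(\zeta_9)\).

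Now take \(E/\mathbb{Q}\) with \(E(\mathbb{Q})[7]=0\) and a point \(P\) of order \(7\) over \(\mathbb{Q}(\zeta_9)^+\). These are the curves quoted from Chou just before Proposition~\ref{prop:3-cyclotomic,avoid7}; their \(7\)-torsion point necessarily lies over \(\mathbb{Q}(\zeta_9)^+\), and 162b1 is an example. Then \(E(K)[7]=0\). Indeed, a nonzero \(R\in E(K)[7]\) would either be a multiple of \(P\), forcing the cubic field \(\mathbb{Q}(P)\) into \(K\), or be independent of \(P\), forcing \(\mu_7\subseteq\mathbb{Q}(\zeta_9)\). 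Yet \(P\in E(K_1)\subseteq E(M)\).

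So the second clause of the \(q=7\) bullet is false for \(K=\mathbb{Q}(\sqrt{-3})\), and in fact for every imaginary \(K\); no checking can carry it over. The paper's proof has the same gap. It rests on the same false principle, ``distinct \(\mathbb{Z}_3\)-extensions of \(K\) meet only in \(K\)'', that appears in the proof of Lemma~\ref{lemma:rootsofunity-p=3}(2); your weaker statement \(\mu(F)\subseteq\mu_{2\cdot 3^\infty}\) rightly avoids it.

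What your argument does establish is the following. Since there is no rational \(49\)-isogeny, \(E(K_\infty)[7^\infty]=E(K_1)[7^\infty]=E(K_{\mathrm{cyc}})[7^\infty]\). Moreover \(E(M)[7^\infty]=E(K)[7^\infty]\) for every \(\mathbb{Z}_3\)-extension \(M\) with \(K_1\not\subseteq M\), for instance \(M=K_{\mathrm{anti}}\).

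The rest of your prime-by-prime check matches the paper's and is sound, apart from two small repairs.

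\textbf{The prime \(q=37\).} Your list of primes, like the paper's, omits \(q=37\). It is a rational isogeny degree and falls under ``\(q>7\)''. It is handled exactly like \(\{13,19,43,67,163\}\): \(3\mid 36\), so Lemma~\ref{lemma:ell-twist} applies, and Theorem~\ref{theorem:chouQinfty}(iii) excludes \(37\)-torsion over \(\mathbb{Q}_{\infty,3}\) for both \(E\) and \(E^{(K)}\).

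\textbf{Non-cyclic Galois groups.} For \(F\) in the compositum, \(\operatorname{Gal}(F/K)\) need not be cyclic, so Lemma~\ref{lemma:growthpattern=ellnotp} does not literally apply. Argue directly instead. \(\operatorname{Gal}(F/K)\) acts on the cyclic group \(E(F)[q^\infty]\cong\mathbb{Z}/q^k\mathbb{Z}\) through a \(3\)-subgroup of \((\mathbb{Z}/q^k\mathbb{Z})^\times\). A unit of \(3\)-power order that fixes a nonzero element is \(\equiv 1\pmod q\), so it lies in a group of \(q\)-power order and is trivial. Hence \(E(K)[q^\infty]\neq 0\) already forces \(E(F)[q^\infty]=E(K)[q^\infty]\).
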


\section{The case \(p = 2\)}

In this section we prove the $p=2$ part of Theorem~\ref{theorem:main}.  
Let $K$ be a quadratic field and $K_{\infty}$ the compositum of all $\mathbb{Z}_2$-extensions of $K$.

\begin{theorem}\label{theorem:p=2}
Let $E/\mathbb{Q}$ be an elliptic curve and $K$ a quadratic field.
For every prime $q>7$ with $q\neq 17$,
\[
E(K_{\infty})_{\mathrm{tors}}[q^{\infty}] = E(K)_{\mathrm{tors}}[q^{\infty}].
\]
\end{theorem}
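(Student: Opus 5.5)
Throughout, \(p=2\) and \(q>7\), \(q\neq 17\) is a prime. By Theorem~\ref{theorem:isogeniesoverQ} and Lemma~\ref{lemma:admits-n-isogeny}, we only need to consider \(q\in\{11,13,19,37,43,67,163\}\); for any other \(q\), no torsion of order \(q\) can appear at all. Indeed, if \(E(F)[q]\neq 0\) for a Galois layer \(F\), we would obtain a rational \(q\)-isogeny, provided \(E(F)[q^\infty]\) is cyclic. We reduce to finite layers \(F\) with \(K\subseteq F\subseteq K_\infty\) and \(F/\mathbb{Q}\) Galois, by passing to Galois closures exactly as in the proof of Theorem~\ref{theorem:p>=5}. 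Then \([F:K]=2^b\) and \([F:\mathbb{Q}]=2^{b+1}\), and it suffices to show \(E(F)[q^\infty]=E(K)[q^\infty]\).

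\textbf{Step 1 (cyclicity).} If \(E[q]\subseteq E(F)\), then Lemma~\ref{lemma:rootsofunity} gives \(\mu_q\subset F\). Since \(F/K\) is unramified outside \(2\) (Proposition~\ref{prop:unramifiedoutsidep}) and \(e_q(K/\mathbb{Q})\le 2\), the ramification index of \(q\) in \(K(\zeta_q)/K\) is at least \((q-1)/2>1\), a contradiction. Hence \(E(F)[q^\infty]\) is cyclic, and therefore \(G_{\mathbb{Q}}\)-stable since \(F/\mathbb{Q}\) is Galois. Thus any generator \(P\) generates a Galois-stable cyclic subgroup, and \(\mathbb{Q}(P)/\mathbb{Q}\) is abelian with degree dividing \(\varphi(q^k)\).

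\textbf{Step 2 (degree and ramification).} Suppose growth occurs, and choose \(P\in E(F)\) of order \(q\) with \(P\notin E(K)\); such a \(P\) exists because, by Lemma~\ref{lemma:growthpattern=ellnotp} and cyclicity, growth forces \(E(K)[q^\infty]=0\). Set \(M=\mathbb{Q}(P)\). Then \([MK:K]\) is a power of \(2\), so \([M:\mathbb{Q}]\) is a power of \(2\) dividing \(q-1\). Running the ramification argument of Lemma~\ref{lemma:p-twist} with \(p=2\) breaks down, because the factor \(2\) coming from \(K\) is no longer coprime to \(p\). Instead I would argue directly: \(MK/K\) is unramified outside \(2\), so every odd prime \(\ell\) has \(e_\ell(M/\mathbb{Q})\mid 2\). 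In particular, \(e_q(M/\mathbb{Q})\le 2\). The inertia at \(q\) of the abelian field \(M\) is cyclic of \(2\)-power order, and the tame ramification of \(\ell\neq q\) (odd) is at most quadratic. Hence \(M\subseteq \mathbb{Q}(\zeta_{2^\infty})\cdot Q\), where \(Q\) is a compositum of quadratic fields ramified at odd primes. Since \(M/\mathbb{Q}\) is abelian of \(2\)-power degree, \(M\) is contained in a multiquadratic extension of \(\mathbb{Q}(\zeta_{2^\infty})\). Consequently, \(P\) is defined over \(\mathbb{Q}(\zeta_{2^\infty},\sqrt{d})\) for suitable \(d\). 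Twisting as in Lemma~\ref{lemma:ell-twist}, I would absorb the quadratic characters into a quadratic twist \(E^{(d)}\), and write \(\psi\chi_d\) for the resulting character. The aim is then to show that \(E^{(d)}\) acquires a point of order \(q\) over \(\mathbb{Q}(\zeta_{2^\infty})\), or over \(\mathbb{Q}_{\infty,2}\) after a further twist by \(\mathbb{Q}(i)\).

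\textbf{Step 3 (contradiction).} Once a quadratic twist \(E'\) of \(E\) has a point of order \(q\) over \(\mathbb{Q}_{\infty,2}\), Theorem~\ref{theorem:chouQinfty}(ii) gives a contradiction, since no \(q>7\) appears in that list. Alternatively, if \([M:\mathbb{Q}]\le 4\), I would use the shortcut of Lemma~\ref{lemma:avci-lemma3.1} with \(\gcd(2^{b+1},q-1)\), and Theorem~\ref{theorem:chou-quartic-galois}, together with the fact that no \(q>7\) occurs as a point order over quartic Galois fields except \(q=13\), which excludes \(11\) and \(19,\dots\). For \(q=13\), \(q-1=12\) has \(2\)-part \(4\), so \([M:\mathbb{Q}]\mid 4\). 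The \(2\)-part of \(q-1\) is \(2\) for \(q\in\{11,19,43,67,163\}\), so there \([M:\mathbb{Q}]\le 2\). For \(q=37\), the \(2\)-part is \(4\). Thus in every case \(M\) is at most quartic cyclic, and Theorem~\ref{theorem:chou-quartic-galois} (order \(q\) impossible for \(q\in\{11,19,37,43,67,163\}\); for \(13\), the list allows \(\mathbb{Z}/13\)) handles everything except \(q=13\). If \([M:\mathbb{Q}]\le 2\), the point lies over a quadratic field, and Theorem~\ref{theorem:NajmanK} rules out order \(q\ge 11\).

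\textbf{Main obstacle.} I expect \(q=13\) with \(M\) cyclic quartic to be the main obstacle. For this case I would try the twist argument: \(\psi\) has order \(4\), and \(M/\mathbb{Q}\) is ramified only at \(2\) and at primes with \(e\le 2\). Then \(\psi\chi\) for a suitable quadratic \(\chi\) should become a character of \(\mathbb{Q}(\zeta_{16})\) of order \(4\) that is trivial on a \(\mathbb{Z}_2\)-subextension. This would put a \(13\)-torsion point on a twist over \(\mathbb{Q}_{\infty,2}\), contradicting Theorem~\ref{theorem:chouQinfty}(ii). If this fails (it plausibly could when \(M\) is ramified at \(q\)), I would use \(e_q\le 2\) together with the known \(13\)-isogeny character constraints. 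This exclusion of \(17\) (where the \(2\)-part \(16\) permits genuinely large growth) explains why the statement omits \(q=17\).
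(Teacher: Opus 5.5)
Your main line (Steps 1--3: twist $E$ by a quadratic character so the $q$-torsion point lands in $\mathbb{Q}_{\infty,2}$) is correct, and it takes a genuinely different route from the paper. The paper never twists for $p=2$; its closing remark even says Lemma~\ref{lemma:ell-twist} does not adapt. It only uses $[\mathbb{Q}(P):\mathbb{Q}]=2^s\mid q-1$ together with Najman's quadratic classification and Chou's quartic-Galois classification, which is exactly your ``alternatively'' route in Step 3. As you noticed, that route cannot dispose of $q=13$: $\mathbb{Z}/13\mathbb{Z}$ is on the list in Theorem~\ref{theorem:chou-quartic-galois}. So the paper's final ``similar argument'' sentence does not actually cover $13$, and Proposition~\ref{prop:p=2-exclude} omits it.

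Your twist argument closes this, and your hedge about ramification at $q$ is unfounded. You already showed $e_\ell(M/\mathbb{Q})\le 2$ for every odd $\ell$, including $\ell=q$. Since $\ell$ is odd, this ramification is tame, so $\psi$ restricted to inertia at $\ell$ is either trivial or the unique quadratic character of $\mathbb{Z}_\ell^\times$. Let $\chi_d$ be the product of the Legendre characters at the odd primes where $\psi$ ramifies. Then $\theta=\psi\chi_d$ is unramified at every odd prime, so by Kronecker--Weber it factors through $\mathrm{Gal}(\mathbb{Q}(\zeta_{2^\infty})/\mathbb{Q})\cong\{\pm1\}\times(1+4\mathbb{Z}_2)$. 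If $\theta$ is nontrivial on complex conjugation, multiply it by the character of $\mathbb{Q}(i)$. The resulting character factors through $\mathrm{Gal}(\mathbb{Q}_{\infty,2}/\mathbb{Q})$. Hence some quadratic twist $E'$ has a point of order $q$ in $E'(\mathbb{Q}_{\infty,2})$, contradicting Theorem~\ref{theorem:chouQinfty}(ii). No $13$-isogeny input is needed. You also do not need $P\notin E(K)$ in Step 2: the argument excludes any point of order $q$ with Galois-stable span, which sidesteps your use of Lemma~\ref{lemma:growthpattern=ellnotp} on possibly non-cyclic layers.

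As for what each route buys: the paper's argument is tied to the isogeny list of Theorem~\ref{theorem:isogeniesoverQ} and to the $2$-part of $q-1$ being at most $4$, and as written it is incomplete at $13$. Yours is uniform in $q\ge 11$, needs neither the isogeny list nor Theorem~\ref{theorem:chou-quartic-galois}, and gives $E(K_\infty)[q^\infty]=0$. In particular it handles $q=17$ as well, since nothing in it depends on the $2$-part of $q-1$. So your closing claim that the $2$-part $16$ ``permits genuinely large growth'' is mistaken: the exclusion of $17$ in the statement is an artifact of the paper's method, not of the mathematics.
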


\begin{proof}
We first collect a basic fact about roots of unity in $\mathbb{Z}_2$-extensions, analogous to Lemma~\ref{lemma:rootsofunity-p=3}.

\begin{lemma}\label{lem:no-cyclo-p>=5}
Let $K$ be a quadratic field and $L/K$ a $\mathbb{Z}_2$-extension. For any prime $p\ge 5$, we have $K(\zeta_p)\not\subseteq L$.
\end{lemma}
\begin{proof}
The cyclotomic extension $\mathbb{Q}(\zeta_p)/\mathbb{Q}$ is totally ramified at $p$ with ramification index $p-1\ge 4$. Its compositum $K(\zeta_p)$ is ramified at every prime of $K$ lying above $p$ (the ramification index divides $p-1$ and is at least $2$). Since $p\neq 2$, this contradicts the fact that $L/K$ is unramified outside the primes above $2$. Hence $K(\zeta_p)\not\subseteq L$.
\end{proof}

Using the Weil pairing, Lemma~\ref{lem:no-cyclo-p>=5} implies that for any intermediate field $F$ of $K_\infty/K$ with $F/\mathbb{Q}$ Galois and any prime $q\ge 5$, the group $E(F)_{\mathrm{tors}}[q]$ is either trivial or cyclic of order $q$.  
Consequently, if
\[
E(F)_{\mathrm{tors}} \cong \mathbb{Z}/n\mathbb{Z} \times \mathbb{Z}/mn\mathbb{Z},
\]
Lemma~\ref{lemma:admits-n-isogeny} implies  that $E$ admits an isogeny of degree $m$, and by Theorem~\ref{theorem:isogeniesoverQ} the integer $m$ belongs to the finite set
\[
\{1,\dots,19\}\cup\{21,25,27,37,43,67,163\}.
\]

Now let $q\ge 11$ be a prime. The growth of the $q^{\infty}$-torsion in a $\mathbb{Z}_2$-extension is given by Lemma~\ref{lemma:growthpattern=ellnotp} (with $p=2$, $q=q$). Exactly as in Section~\ref{section:p=3}, three possibilities occur; the only case that can lead to a strict increase is when $E(K)_{\mathrm{tors}}[q^{\infty}]=0$ but $E(F)_{\mathrm{tors}}[q^{\infty}]\neq 0$ for some finite Galois subextension $F/\mathbb{Q}$ of $K_\infty/\mathbb{Q}$. In that situation we have the following structural description.

\begin{proposition}\label{prop:p=2-large-q-structure}
Assume $q\ge 11$, $E(K)_{\mathrm{tors}}[q^{\infty}]=0$, and $E(F)_{\mathrm{tors}}[q^{\infty}]\neq 0$ for some finite Galois $F/\mathbb{Q}$ inside $K_\infty$. Then
\[
E(F)_{\mathrm{tors}}[q^{\infty}] \cong \mathbb{Z}/q^k\mathbb{Z},
\]
for some \(k\ge 1\) and $[\mathbb{Q}(P):\mathbb{Q}]=2^{s}$ for some $s\ge 1$, where $P$ is a generator of the $q$-torsion.
\end{proposition}
\begin{proof}
By Lemma~\ref{lemma:growthpattern=ellnotp} the $q^{\infty}$-part is cyclic. say \(q^k\), and hence is \(G_{\mathbb{Q}}\)-stable. Let $P\in E(F)$ be a point of order $q^{k}$ ($k\ge 1$) with $P\notin E(K)$. The Galois action on $\langle P\rangle$ yields a character
\[
\psi:G_{\mathbb{Q}} \longrightarrow \operatorname{Aut}(\langle P\rangle) \cong (\mathbb{Z}/q^{k}\mathbb{Z})^{\times},
\]
so $[\mathbb{Q}(P):\mathbb{Q}]$ divides $\varphi(q^{k})=(q-1)q^{k-1}$. Because $\mathbb{Q}(P)\subseteq K_{\infty}$ and $K_{\infty}/K$ is a pro-$2$ extension, $[\mathbb{Q}(P):\mathbb{Q}]$ is a power of $2$, say $2^{s}$.
\end{proof}

We now exclude several small primes for which such a point $P$ would contradict known results on torsion over number fields of low degree.

\begin{proposition}\label{prop:p=2-exclude}
Under the hypotheses of Proposition~\ref{prop:p=2-large-q-structure}, the prime $q$ cannot belong to  the set $\{11,19,37,43,67,163\}$.
\end{proposition}
\begin{proof}
\emph{Case $q=11,19,43,67,163$.}
Here $q-1$ is even but not divisible by $4$; from $2^{s}\mid(q-1)$ we obtain $s=1$. Hence $\mathbb{Q}(P)$ is a quadratic field. Theorem~\ref{theorem:NajmanK} shows that no elliptic curve $E/\mathbb{Q}$ acquires a point of order $q$ over any quadratic field, contradiction.

\emph{Case $q=37$.}
We have $37-1=36$, so $s$ can be $1$ or $2$. The quadratic case is ruled out exactly as above. If $s=2$, then $[\mathbb{Q}(P):\mathbb{Q}]=4$, and $F$ contains a quartic Galois extension of $\mathbb{Q}$. However, Theorem~\ref{theorem:chou-quartic-galois} states that an elliptic curve $E/\mathbb{Q}$ cannot have a $37$-torsion point over any quartic Galois extension of $\mathbb{Q}$. Hence this case is also impossible.
\end{proof}

Now let $q>7$ be any prime with $q\neq 17$. If $q\ge 11$, either $q$ is one of the primes eliminated in Proposition~\ref{prop:p=2-exclude}, or a similar argument using the constraint $[\mathbb{Q}(P):\mathbb{Q}]=2^{s}\mid(q-1)$ together with the known classification of torsion over quadratic and quartic Galois extensions (Theorems~\ref{theorem:NajmanK} and~\ref{theorem:chou-quartic-galois}) shows that such a $q$-torsion point cannot appear without already being present over $K$. Consequently
\[
E(K_{\infty})_{\mathrm{tors}}[q^{\infty}] = E(K)_{\mathrm{tors}}[q^{\infty}]
\]
for every $q>7$, $q\neq 17$.
\end{proof}

\begin{remark}
Lemma~\ref{lemma:ell-twist} is not applicable in this section, as it requires the prime \(p\) to be odd. Although one could prove an adaptation of Lemma~\ref{lemma:ell-twist} to the case \(p = 2\), such a modified version would require additional hypotheses that are not generally satisfied in our setting.
\end{remark}

\begin{remark}
For \(q = 17\), we have \(q-1 = 2^4\). By Theorems~\ref{theorem:NajmanK} and~\ref{theorem:chou-quartic-galois}, a point \(P\) of order \(17\) cannot appear over quadratic or quartic extensions. However, the arguments above do not rule out the possibility of such a point appearing over extensions of degree \(8\) or \(16\). One possible approach would be to study the primes ramified in \(\mathbb{Q}(P)\) or \(\mathbb{Q}(E[17])\); under additional assumptions on \(K\), this might exclude the remaining cases. Since this would require a separate line of investigation, we do not pursue it here and leave it as a direction for future work.
\end{remark}

\bibliographystyle{alpha}
\bibliography{ref}

\end{document}